\documentclass{amsart}

\usepackage{amssymb}
\usepackage{amsmath}
\usepackage{amsthm}				
\usepackage{tikz}					
\usepackage{booktabs}
\usepackage{makecell}

\usepackage{xstring,xifthen}
\usepackage{ifthen}

\usepackage{verbatim} 			
\setcounter{tocdepth}{4}

\newtheorem{thm}{Theorem}
\newtheorem{prop}{Proposition}[section]
\newtheorem{lem}[prop]{Lemma}
\newtheorem{cor}[prop]{Corollary}

\newtheorem{conj}[prop]{Conjecture}
\newtheorem*{conj*}{Conjecture}

\newcommand{\rank}{\mathrm{rank}}
\newcommand{\PGL}[1]{PGL\left(2,#1\right)} 		
\newcommand{\PSL}[1]{PSL\left(2,#1\right)}		
	
\newcommand{\PSiL}[1]{P\Sigma L\left(2,#1\right)}	
\newcommand{\GL}[1]{GL\left(2,#1\right)}		
\newcommand{\SL}[1]{SL\left(2,#1\right)}		
\newcommand{\Z}[1]{Z\left(#1\right)}			

\newcommand{\GF}[1]{\mathbb{F}_{#1}}			
\newcommand{\gen}[1]{\left\langle#1\right\rangle}	
\newcommand{\Gcd}[1]{\left(#1\right)}
\newcommand{\Lcm}[1]{\mathrm{lcm}\left(#1\right)}
\newcommand{\Max}[1]{\mathrm{max}\left(#1\right)}
\newcommand{\Tr}[1]{\mathrm{Tr}\left(#1\right)}

\newcommand{\C}[1]{C_{#1}}	

\newcommand{\order}[1]{\left|#1\right|}

\newcommand{\rk}{n}
\newcommand{\PG}[1]{PG\left(1,#1\right)}
\newcommand{\eul}[1]{\phi\left(#1\right)}

\newcommand{\M}[1]{M\left(#1\right)}

\newcommand{\Det}[1]{\mathrm{det}\left(#1\right)}
\newcommand{\D}[1]{D_{#1}}
\newcommand{\prim}{j}
\newcommand{\s}[1]{
\ifthenelse{#1 = 1}
{\alpha}
{\ifthenelse{#1 = 2}
  {\beta}
  {\ifthenelse{#1 = 3}
    {\gamma}
    {\delta}
    }
  }
}

\newcommand{\set}[1]{\left\{#1\right\}}
\newcommand{\Aut}[1]{
\StrLen{#1}[\mystring]
\ifthenelse{\mystring > 1}
{\mathrm{Aut}\left(#1\right)}
{\mathrm{Aut}#1}
}

\newcommand{\AGL}[1]{\mathrm{AGL}\left(1,#1\right)}

\newcommand{\Sym}[1]{S_#1}

\newcommand{\poly}{\mathcal{P}}

\newcommand{\Autgp}{\Gamma}

\newcommand{\GFb}[1]{\bar{\mathbb{F}}_{#1}}

\newcommand{\scal}[1]{
\ifthenelse{#1 = 1}
{\lambda}
{\ifthenelse{#1 = 2}
  {\mu}
  {\ifthenelse{#1 = 3}
    {\nu}
    {\kappa}
    }
  }
}

\newcommand{\ab}[1]{
\StrLen{#1}[\mystring]
\ifthenelse{\mystring > 1}
{\left(#1\right)^{ab}}
{#1^{ab}}
}

\def\subsection{\@startsection{subsection}{3}%
  \z@{.5\linespacing\@plus.7\linespacing}{-.5em}%
    {\normalfont\itshape}}
\makeatletter
\def\subsection{\@startsection{subsection}{2}%
\z@{1.5\linespacing\@plus.7\linespacing}{1.5\linespacing}%
  {\normalfont\bfseries}}

\def\subsubsection{\@startsection{subsubsection}{2}%
\z@{1.5\linespacing\@plus.7\linespacing}{1.5\linespacing}%
  {\normalfont\bfseries}}

\def\section{\@startsection{section}{1}%
  \z@{1.5\linespacing\@plus\linespacing}{1.5\linespacing}%
  {\normalfont\scshape\centering}}

\makeatother

\title[Projective linear groups and chiral polytopes]{Projective linear groups as automorphism groups of chiral polytopes}
\author{Dimitri Leemans}
\address{Dimitri Leemans, University of Auckland,
Department of Mathematics,
Private Bag 92019,
Auckland, New Zealand}
\email{d.leemans@auckland.ac.nz}

\author{J\'er\'emie Moerenhout}
\address{J\'er\'emie Moerenhout, University of Auckland,
Department of Mathematics,
Private Bag 92019,
Auckland, New Zealand}
\email{j.moerenhout@auckland.ac.nz}

\author{Eugenia O'Reilly-Regueiro}
\address{Eugenia O'Reilly-Regueiro, Instituto de Matematicas, Universidad Nacional Autonoma de Mexico,
Circuito Exterior, C.U. Coyoacan 04510, Mexico D.F.}
\email{eugenia@im.unam.mx}

\keywords{Chiral polytopes, projective special linear groups, projective general linear groups}
\subjclass[2000]{52B11, 20G40}


\begin{document}



\begin{abstract}
It is already known that the automorphism group of a chiral polyhedron is never isomorphic to $\PSL{q}$ or $\PGL{q}$ for any prime power $q$.
In this paper, we show that $\PSL{q}$ and $\PGL{q}$ are never automorphism groups of chiral polytopes of rank at least $5$.
Moreover, we show that $\PGL{q}$ is the automorphism group of at least one chiral polytope of rank $4$ for every $q\geq5$.
Finally, we determine for which values of $q$ the group $\PSL{q}$ is the automorphism group of a chiral polytope of rank $4$, except when $q=p^d\equiv3\pmod{4}$ where $d>1$ is not a prime power, in which case the problem remains unsolved.

\end{abstract}%

\maketitle

%
%
%
\section{Introduction}
Polyhedra, and their generalisation to higher ranks, polytopes, are geometric objects that have been studied since the  time of the Greeks.
Abstract polytopes ge\-ne\-ra\-li\-se the concept of polytopes, defining them as partially ordered sets that satisfy some axioms (see Section~\ref{arp} for more details).
The study of highly symmetric abstract polytopes constructed from families of almost simple groups has had a lot of attention in the last decade. 
In the case where the polytopes have the maximum level of symmetries, also called regular polytopes, results have been obtained for several families including transitive groups~\cite{transitive}, the symmetric and alternating groups~\cite{FL2011,FLM2012b,FLM2012, extension, anproof}, the Suzuki groups~\cite{KL2009,Lee2005a}, the small Ree groups~\cite{LSV2015}, the almost simple groups with socle $\PSL{q}$~\cite{DiJuTho,LS2005a, LS2008a}, groups of type $PSL(3,q)$ and $PGL(3,q)$~\cite{BV2010}, $PSL(4,q)$~\cite{BL2015} and several sporadic simple groups~\cite{CL2013,CLM2012,HH2010,LM2011,LV2006}.

In the case where the polytopes have maximum level of rotational symmetries but no reflections, also called the chiral case, much less is known.
Hartley, Hubard and Leemans produced in~\cite{HHL2012} an atlas of chiral polytopes for small almost simple groups.
Hubard and Leemans studied the family of almost simple groups of Suzuki type in~\cite{HL2014}. Conder, Hubard, O'Reilly-Regueiro and Pellicer studied the alternating and symmetric groups~\cite{CHOP2015,CHOP2}.
Chiral polytopes of rank three (also called polyhedra) and almost simple groups with socle $\PSL{q}$ have been shown not to exist for the groups $\PSL{q}$ and $\PGL{q}$~\cite{Hypermaps, ELO2015}.
More recently, the first two authors of this article showed in~\cite{LM2016} that if $q\geq 4$ and $G$ is an almost simple group with socle $\PSL{q}$,  then $G$ is the automorphism group of (at least) one chiral polyhedron if and only if $G$ is not one of $\PSL{q}$, $\PGL{q}$, $\M{1,9}$ or $\PSiL{4}\cong \PGL{5}$.

The aim of this article is to determine what is the maximal rank of a chiral polytope with automorphism group $\PSL{q}$ or $\PGL{q}$.
Our results are summarised in the following theorem.

\begin{thm}\label{maintheo}
Let $q\geq 4$ be a prime power.
\begin{enumerate}
\item
Neither $\PSL{q}$ nor $\PGL{q}$ are automorphism groups of chiral polytopes of rank at least 5.\label{main:rg5}
\item
The group $\PGL{q}$ is the automorphism group of at least one chiral polytope of rank 4 if and only if $q\geq5$. \label{main:PGL}
\item
If $q\equiv1\pmod{4}$, then $\PSL{q}$ is the automorphism group of at least one chiral polytope of rank 4 if and only if $q\geq13$. \label{PSL_1mod4}
\item \label{PSL_3mod4}
Let $d$ be an (odd) prime power or $d=1$ and $q=p^d\equiv3\pmod{4}$ be a prime power.
Then $\PSL{q}$ is the automorphism group of at least one chiral polytope of rank 4 if and only if at least one of the following conditions on $q$ is satisfied:
\begin{enumerate}
\item $q=p\equiv11,19\pmod{20}$ and $p\equiv1,3,4,5,9\pmod{11}$ and $3\pm2\sqrt{5}$ are both squares in $\GF{p}$;\label{1119mod20_1}
\item $q=p\equiv11,19\pmod{20}$ and $p\equiv2,6,7,8\pmod{11}$;\label{1119mod20_2}
\item $q=p\equiv11,19\pmod{20}$ and $p\equiv1,4,5,6,7,9,11,16,17\pmod{19}$ and $\frac{7\pm5\sqrt{5}}{2}$ are both squares in $\GF{p}$;\label{1119mod20_3}
\item $q=p\equiv11,19\pmod{20}$ and $p\equiv2,3,8,10,12,13,14,15,18\pmod{19}$;\label{1119mod20_4}
\item $q=p\equiv31,39\pmod{40}$.\label{3139mod40}
\end{enumerate}
\end{enumerate}
\end{thm}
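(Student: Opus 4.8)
Recall that the automorphism group $\Gamma$ of a chiral polytope of rank $n$ is generated by abstract rotations $\sigma_1,\dots,\sigma_{n-1}$, where $\sigma_i$ has order $p_i\ge 3$, subject to the relations $(\sigma_i\sigma_{i+1}\cdots\sigma_j)^2=1$ for all $1\le i<j\le n-1$, together with the intersection condition on the subgroups $\Gamma_I=\langle\sigma_i:i\in I\rangle$; writing $\sigma_i=\rho_{i-1}\rho_i$ in terms of the underlying string reflections shows moreover that $\sigma_i\sigma_j=\sigma_j\sigma_i$ whenever $|i-j|\ge 3$, and that $\langle\sigma_i,\dots,\sigma_j\rangle$ on consecutive indices is the rotation group of the rank-$(j-i+2)$ section. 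Finally $\Gamma$ belongs to a properly chiral polytope exactly when no automorphism of $\Gamma$ realises the enantiomorphism $\sigma_1\mapsto\sigma_1^{-1}$, $\sigma_2\mapsto\sigma_1^2\sigma_2$, $\sigma_i\mapsto\sigma_i$ for $i\ge3$. The plan is to settle the non-existence claim~\eqref{main:rg5} by a centraliser argument, and the existence claims~\eqref{main:PGL}--\eqref{PSL_3mod4} by exhibiting explicit generating triples inside $\PSL{q}$ and $\PGL{q}$. Throughout I use the fact, special to these rank-one groups, that the centraliser of an element of order at least $3$ in $\PSL{q}$ or $\PGL{q}$ is abelian (a cyclic maximal torus if the element is semisimple, the unipotent radical of a Borel if it is unipotent), together with Dickson's classification of the subgroups of $\PSL{q}$.

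\textbf{Part~\eqref{main:rg5}.} For rank $n\ge 6$ the argument is immediate: $\sigma_1$ has order at least $3$ and commutes with $\sigma_4$ and $\sigma_5$, since $|1-4|,|1-5|\ge 3$, so $\sigma_4,\sigma_5$ lie in the abelian group $C_G(\sigma_1)$ and hence commute. Then $(\sigma_4\sigma_5)^2=1$ gives $\sigma_4^2=\sigma_5^{-2}$, a nontrivial element (as $\sigma_4$ has order at least $3$) of $\langle\sigma_4\rangle\cap\langle\sigma_5\rangle$, contradicting the intersection property $\langle\sigma_4\rangle\cap\langle\sigma_5\rangle=1$. The case $n=5$ is finer, since $(\sigma_1,\sigma_4)$ is then the only commuting pair. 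Here I would analyse the facet group $\langle\sigma_1,\sigma_2,\sigma_3\rangle$ and vertex-figure group $\langle\sigma_2,\sigma_3,\sigma_4\rangle$, which are proper subgroups generating $G$ and meeting in $\langle\sigma_2,\sigma_3\rangle$, and which must themselves be rank-$4$ rotation groups. Dickson's list leaves only $A_4,S_4,A_5$, solvable subgroups of a Borel, and subfield subgroups $\PSL{q_0}$, $\PGL{q_0}$ as candidates; the small and solvable ones are excluded directly (e.g. $A_5$ contains no $C_3\times C_3$ and no element of order $15$, so it admits no commuting order-$\ge3$ pair with trivial intersection), and the subfield case is ruled out using the extra constraint that the diagonal section $\langle\sigma_1,\sigma_4\rangle\cong C_{p_1}\times C_{p_4}$ embeds in $\PSL{q}$ only when $p_1,p_4$ are coprime or both equal to $p$.

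\textbf{Parts~\eqref{main:PGL}--\eqref{PSL_3mod4}: construction.} For a rank-$4$ polytope I must produce $\sigma_1,\sigma_2,\sigma_3$ generating $G$ with $\sigma_1\sigma_3=\sigma_3\sigma_1$ and $(\sigma_1\sigma_2)^2=(\sigma_2\sigma_3)^2=(\sigma_1\sigma_2\sigma_3)^2=1$, satisfying the rank-$4$ intersection property and admitting no enantiomorphic automorphism. Since $\sigma_1,\sigma_3$ commute and have order at least $3$, they lie in a common maximal torus, so I may take them in a split torus (orders dividing $q-1$) or a non-split torus (orders dividing $q+1$); this fixes $\sigma_1,\sigma_3$ up to parameters, and the three relations turn into trace-zero (involution) conditions on $\sigma_2$, i.e. explicit polynomial equations over $\GF{q}$ in its entries. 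The content of the theorem is then: (a) these equations are solvable over $\GF{q}$; (b) a solution can be chosen so that $\langle\sigma_1,\sigma_2,\sigma_3\rangle$ is all of $G$ rather than a small or subfield subgroup from Dickson's list; and (c) for the $\PSL{q}$ statements the elements actually lie in $\PSL{q}$, i.e. their $\SL{q}$-lifts have square determinant. Requirement~(c) is what converts prescribed orders into arithmetic conditions: forcing orders $5$ and $10$ makes the traces $\frac{\pm1\pm\sqrt5}{2}$ appear, so $\sqrt5\in\GF{q}$, whence the conditions modulo $20$ and $40$, while the clauses that $3\pm2\sqrt5$ and $\frac{7\pm5\sqrt5}{2}$ be squares, and the residues modulo $11$ and $19$, are the remaining squareness constraints evaluated by quadratic reciprocity. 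The thresholds $q\ge5$ in~\eqref{main:PGL} and $q\ge13$ in~\eqref{PSL_1mod4} record the small fields for which every candidate triple lands in $A_4,S_4$ or $A_5$ (for instance $\PGL{4}\cong A_5$, $\PSL{5}\cong A_5$, $\PSL{9}\cong A_6$), handled by inspection. Chirality is confirmed case by case by checking that no element of $\PGaL{q}$, which exhausts the automorphisms of $\PSL{q}$, carries the triple to its enantiomorphic image, a comparison that again reduces to traces.

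\textbf{Main obstacle.} The delicate case, and the heart of the proof, is part~\eqref{PSL_3mod4}, $q=p^d\equiv3\pmod4$. Now $-1$ is a non-square, the two tori behave asymmetrically, and the squareness bookkeeping of requirement~(c) becomes genuinely subtle: it is here that the distinctions between residue classes modulo $11$ and $19$, and the explicit conditions on $3\pm2\sqrt5$ and $\frac{7\pm5\sqrt5}{2}$, cannot be absorbed and must be stated separately. Simultaneously one must secure requirement~(b), namely that the triple generates all of $G$ and not a subfield subgroup $\PSL{p^{d'}}$ with $d'\mid d$. Controlling all such subfield subgroups at once is precisely what forces the hypothesis that $d$ be an odd prime power, so that the divisors $d'$ form a single chain and can be peeled off by a nested trace argument; when $d$ has two distinct prime factors there are incomparable subfield subgroups that the trace conditions do not separate, which is exactly why that sub-case is left open. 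I expect this tension between~(b) and~(c) --- choosing orders that pin the triple to the whole group while keeping it inside $\PSL{q}$ --- to be the main technical difficulty.
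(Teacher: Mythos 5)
There is a genuine gap, and it sits at the foundation of your constructions for parts (2)--(4). You assert that in rank $4$ the generators must satisfy $\sigma_1\sigma_3=\sigma_3\sigma_1$ and you build the whole existence argument on placing $\sigma_1$ and $\sigma_3$ in a common maximal torus. But by your own (correct) observation, $\sigma_i$ and $\sigma_j$ commute only when $|i-j|\geq 3$; in rank $4$ the defining relations are just $(\sigma_1\sigma_2)^2=(\sigma_2\sigma_3)^2=(\sigma_1\sigma_2\sigma_3)^2=1$, and $\sigma_1,\sigma_3$ do \emph{not} commute. Indeed they cannot: in the type $[3,5,3]$ polytopes of part (4) both $\sigma_1$ and $\sigma_3$ have order $3$, and $\PSL{p}$ with $p>3$ contains no $C_3\times C_3$, so commuting would force $\gen{\sigma_3}=\gen{\sigma_1}$ and hence $\Gamma=\gen{\sigma_1,\sigma_2}$, a parabolic subgroup. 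The paper's constructions instead fix $\sigma_2$ in a split torus (for the non-fixed-point-free case), solve the three trace-zero conditions for $\sigma_1$ and $\sigma_3$, which land in two \emph{different} Borel subgroups, and for $q\equiv3\pmod 4$ use that every involution of $\PSL{q}$ is fixed-point-free, so no parabolic subgroup fixes a point; for $q=p$ this forces the parabolic subgroups into $\{A_4,S_4,A_5\}$, the type into $\{[3,5,3],[5,3,5],[5,3,4]\}$ up to duality, and imports the Jones--Long(--Mednykh) classification of $\PSL{q}$-quotients of $[3,5,3]^+$ and $[5,3,5]^+$ --- the actual source of the residue conditions modulo $11$ and $19$ and of the squares $3\pm2\sqrt5$ and $(7\pm5\sqrt5)/2$. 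None of this is recoverable from a commuting-torus ansatz.

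Second, your rank-$5$ argument stops exactly where the difficulty begins. The observation that $\gen{\sigma_1,\sigma_4}\cong C_{p_1}\times C_{p_4}$ must be cyclic or unipotent only restricts the pair $(p_1,p_4)$; it does not exclude the configuration in which the facet and vertex-figure groups are subfield subgroups (nothing in your sketch forbids, say, $p_1=4$, $p_4=5$ with a cyclic $C_{20}$ inside $\PSL{q}$). The paper's proof instead exploits that the rank-$3$ sections $\gen{\sigma_1,\sigma_2,\sigma_3\sigma_4}$ and $\gen{\sigma_1\sigma_2,\sigma_3,\sigma_4}$ are \emph{full} automorphism groups of directly regular polyhedra (Lemma~\ref{lem:rank3facesAreDirReg}), hence isomorphic to $S_4$ or $\PGL{p^e}$ by the regular classification, and then runs a parity analysis on $d/e$ via Lemma~\ref{lem:PGLinPSL} and Proposition~\ref{prop:interSubf} to reach the contradiction $\PSL{p^d}\leq\PGL{p^f}$ with $f<d$. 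Some such mechanism is indispensable and is absent from your plan. On the positive side, your centraliser argument for rank $\geq 6$ (both $\sigma_4$ and $\sigma_5$ lie in the abelian group $C_\Gamma(\sigma_1)$, so $(\sigma_4\sigma_5)^2=1$ gives $\sigma_4^2=\sigma_5^{-2}$, violating the intersection property) is a clean alternative to the paper's inductive step, provided you also dispose of the degenerate cases $p_i=2$.
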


Note that, in case \eqref{PSL_3mod4}, ``odd" is in parentheses since $q\equiv3\pmod{4}$ is true only if $d$ is odd.
Also, if $q$ is even, then $\PSL{q}\cong\PGL{q}$ and one may refer to case \eqref{main:PGL}.
Before continuing, observe that each condition from \eqref{1119mod20_1} to \eqref{3139mod40} is necessary (that is, not redundant).
Indeed, for each case, there exists an integer which is the smallest prime power such that the condition is necessary.
Those integers are as follows:
\begin{align*}
\eqref{1119mod20_1}:q=619;&&\eqref{1119mod20_2}:q=139;&&\eqref{1119mod20_3}:q=131;&&\eqref{1119mod20_4}:q=179;&&\eqref{3139mod40}:q=631.
\end{align*}

In order to make the previous result complete, it remains to determine whether $\PSL{q}$ is the automorphism group of a chiral polytope of rank $4$, when $q=p^d\equiv3\pmod{4}$ and $d>1$ is not a prime power, that is when $d$ is the product of at least two odd prime powers of distinct characteristic.
We provide here a conjecture, and strong evidence for it in Section \ref{sec:conj}.

\begin{conj*}
Let $G\cong \PSL{q}$ with $d>1$.
Then $G$ is the automorphism group of a chiral polytope of rank 4 if and only if one of the following conditions on $q$ is satisfied:
\begin{enumerate}
\item $4<q\equiv0\pmod{2}$; \label{0mod2_}
\item $9<q\equiv1\pmod{4}$; \label{1mod4_}
\item $q=p^d\equiv3\pmod{4}$ where $d>1$ is not a prime power. \label{pd3mod4}
\end{enumerate}
\end{conj*}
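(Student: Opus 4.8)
The plan is to reduce to Theorem~\ref{maintheo} wherever possible, so as to isolate the one genuinely new assertion. For $d>1$, both the necessity of the three conditions and the sufficiency of~\eqref{0mod2_} and~\eqref{1mod4_} are already contained in Theorem~\ref{maintheo}. Indeed, when $q$ is even we have $\PSL{q}\cong\PGL{q}$, so part~\eqref{main:PGL} gives a rank~$4$ chiral polytope exactly for $q\geq5$, i.e.\ for $q\geq8$ once $d>1$; when $q\equiv1\pmod4$, part~\eqref{PSL_1mod4} gives the answer $q\geq13$, i.e.\ $q>9$; and when $q\equiv3\pmod4$ with $d$ a prime power, part~\eqref{PSL_3mod4} yields a negative answer, since every condition~\eqref{1119mod20_1}--\eqref{3139mod40} requires $q=p$ and so cannot hold once $d>1$. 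Hence the whole content beyond Theorem~\ref{maintheo} is the sufficiency of~\eqref{pd3mod4}: for $q=p^d\equiv3\pmod4$ with $d$ odd and divisible by at least two distinct primes (which forces $p\equiv3\pmod4$), one must construct, unconditionally in $p$, a rank~$4$ chiral polytope whose automorphism group is $\PSL{q}$.

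For this I would use the standard correspondence between rank~$4$ chiral polytopes with automorphism group $G$ and triples $(\sigma_1,\sigma_2,\sigma_3)$ generating $G$ with $(\sigma_1\sigma_2)^2=(\sigma_2\sigma_3)^2=(\sigma_1\sigma_2\sigma_3)^2=1$, satisfying the intersection condition $\gen{\sigma_1,\sigma_2}\cap\gen{\sigma_2,\sigma_3}=\gen{\sigma_2}$, and admitting no automorphism of $G$ realising the mirror map $\sigma_1\mapsto\sigma_1^{-1}$, $\sigma_2\mapsto\sigma_1^2\sigma_2$, $\sigma_3\mapsto\sigma_3$. After fixing a Schl\"afli type $\{p_1,p_2,p_3\}$, the three involution conditions translate, through the trace calculus in $\SL{q}$ (an involution of $\PSL{q}$ lifts to a trace-zero element of $\SL{q}$), into a small system of polynomial equations over $\GF{q}$ in the traces of the generators and their products. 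The structural feature absent in the prime-power case is the subfield lattice of $\GF{q}=\GF{p^d}$: since $d$ is odd with at least two distinct prime divisors, $\GF{q}$ has incomparable proper subfields and correspondingly several intermediate split and non-split tori in which to place $\sigma_1$ and $\sigma_3$. The heart of the construction is to show that this extra freedom makes the trace system solvable for \emph{every} such $q$, so that the quadratic-residue side conditions~\eqref{1119mod20_1}--\eqref{3139mod40} of the prime case are no longer needed.

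The verification then follows the usual three steps. For generation I would invoke Dickson's classification of the subgroups of $\PSL{q}$ and arrange the orders of the generators to involve primitive prime divisors of $p^d-1$ and of $p^{2d}-1$; by Zsigmondy's theorem these exist away from finitely many small $q$, and since $d$ is odd such divisors divide $p^e\pm1$ for no proper $e\mid d$, so the triple can lie in no subfield subgroup $\PSL{p^e}$, in no Borel subgroup, and in none of the bounded subgroups $\Alt{4},\Sym{4},\Alt{5}$ (note that $d$ odd also excludes subfield subgroups isomorphic to $\PGL{p^e}$, since these require $2e\mid d$). The only maximal overgroups left are the torus normalizers, which are excluded by the relative position of the generators. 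The intersection condition reduces, for rank~$4$, to a short list of order and subgroup-intersection identities among $\gen{\sigma_1,\sigma_2}$, $\gen{\sigma_2,\sigma_3}$ and their overlap. Finally, for chirality I would use that the full automorphism group of $\PSL{q}$ is $\PGaL{q}$ and that conjugation preserves traces up to sign, arranging the trace data so that the mirror triple carries a trace lying outside the $\frob$-orbit (up to sign) of the original; this prevents any field-and-conjugation automorphism from realising the mirror, so the polytope is properly chiral.

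The main obstacle is uniformity in $q$. For any fixed $q$ the three checks above are a finite verification --- this is exactly the computational evidence gathered in Section~\ref{sec:conj} --- but the conjecture ranges over an infinite family in which both the prime $p$ and the composite odd exponent $d$ vary, so what is really needed is a single parametrised construction together with generation and chirality arguments insensitive to the residue of $p$. The tension is that forcing full generation pushes one toward symmetric, easily controlled choices of $(\sigma_1,\sigma_2,\sigma_3)$, whereas chirality requires breaking precisely that symmetry so that no automorphism inverts the triple. Reconciling the two for all odd, non-prime-power $d$ at once --- and thereby making precise why the passage from prime-power to non-prime-power exponent removes the obstructions seen in Theorem~\ref{maintheo}\eqref{PSL_3mod4} --- is what keeps the statement a conjecture rather than a theorem.
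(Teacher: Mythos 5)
You should first note that the statement you are proving is stated in the paper only as a conjecture: the paper contains no proof of it, and explicitly says that the sufficiency of case (3) is open (the smallest relevant value being $q=3^{15}$, out of computational reach). Your reduction of everything else to Theorem~\ref{maintheo} is correct and is exactly what the paper does: for $q$ even one has $\PSL{q}\cong\PGL{q}$ and part~(2) of the theorem applies; for $q\equiv1\pmod4$ part~(3) applies; and for $q=p^d\equiv3\pmod4$ with $d>1$ a prime power, every condition (a)--(e) of part~(4) forces $q=p$, so the answer is negative (this is the content of Lemma~\ref{lem:subsg}). So your proposal correctly isolates the one open assertion --- but it does not close it. What you offer for case (3) is a programme (trace calculus, Zsigmondy primes, Dickson's subgroup list, a trace-based obstruction to the mirror automorphism), not an argument, and you say so yourself in the last paragraph. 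As a proof the proposal therefore has a genuine, and unavoidable, gap: the existence of a solvable trace system for \emph{every} $q=p^d\equiv3\pmod4$ with $d$ odd and not a prime power is precisely the unproved step.

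It is worth comparing your plan with the paper's candidate construction in Section~\ref{sec:conj}. The paper does not look for freedom in ``intermediate split and non-split tori''; the decisive structural point (forced by Lemma~\ref{lem:FixedPointFree} and the argument of Lemma~\ref{lem:subsg}) is that the two parabolic subgroups $\gen{\s{1},\s{2}}$ and $\gen{\s{2},\s{3}}$ must sit inside \emph{incomparable} subfield subgroups $\PSL{p^{e_1}}$ and $\PSL{p^{e_2}}$ with $e_1e_2=d$ and $\Gcd{e_1,e_2}=1$ --- which is exactly why $d$ must fail to be a prime power. The paper then writes explicit matrices whose existence reduces to a single quadratic-residue condition (Conjecture~\ref{conj:1}, that $\omega_1^2\omega_2^2-4(\omega_1^2+\omega_2^2)$ can be made a square for suitable primitive elements), plus a separate conjecture that the intersection property $\gen{\s{1},\s{2}}\cap\gen{\s{2},\s{3}}=\gen{\s{2}}$ holds. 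Your plan hopes to avoid residue conditions altogether by exploiting the extra freedom; the paper's experience with the prime case (Theorems~\ref{lem:quotients}, \ref{thm:535}, \ref{thm:353}, where such conditions genuinely appear and are not removable) suggests that some arithmetic condition will survive and must be proved always satisfiable, which is where the real difficulty lies.
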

The cases (1) and (2) have been shown to be true in this paper.
The smallest value of $q$ for which case (3) applies is for $q=3^{15}$.
This does not make possible any computation to test this conjecture.
However, we construct representative matrices in Section \ref{sec:conj}, that we strongly believe generate the automorphism group of a chiral polytope.


The paper is organised as follows.
In Section~\ref{basic}, we recall the basic notions about abstract polytopes and projective linear groups needed to understand this article.
In Section~\ref{rank5}, we prove part \eqref{main:rg5} of Theorem~\ref{maintheo}.
In Section~\ref{sec:PGL4}, we prove part \eqref{main:PGL} of Theorem~\ref{maintheo} by constructing examples for each $q>4$.
In Section~\ref{sec:PSL4}, we prove parts \eqref{PSL_1mod4} and \eqref{PSL_3mod4} of Theorem~\ref{maintheo}.
We conclude the paper, in Section~\ref{sec:conj}, with some open problems and ideas to solve them.
\section{Basic notions}\label{basic}

\subsection{Abstract polytopes}\label{arp}
We refer to \cite{McMullen} for further information about the theory of abstract polytopes and abstract regular polytopes.

An abstract polytope $(\mathcal{P}, \leq)$ of rank $\rk$ is a partially ordered set whose elements are called {\em faces}, with a strictly monotone rank function $\rank:\mathcal{P}\rightarrow\{-1,0, \ldots, \rk\}$ and satisfying four axioms.

\begin{enumerate}

\item[(P1)] $\mathcal{P}$ has a unique face $F_{-1}$ corresponding to the empty set and a unique face $F_\rk$ corresponding to $\mathcal{P}$;\label{p1}
\item[(P2)] every maximal chain of $\mathcal{P}$ has $\rk+2$ faces, one of each rank;\label{p2}
\end{enumerate}

 A {\em flag} is a maximal chain of $\mathcal{P}$. Two flags are {\em adjacent} if they differ in just one
face, they are $i$-adjacent if the rank of the different face is $i$.
\begin{enumerate}
\item[(P3)] $\mathcal {P}$ is {\em strongly flag connected}, that is, for any two flags $\Phi$, $\Psi$ of $\mathcal{P}$, there exists a sequence of adjacent flags going from $\Phi$ to $\Psi$;\label{p3}

\item[(P4)] $\mathcal{P}$ satisfies the {\em diamond condition}, that is, given any two faces $J < K$ with $\rank(J) = \rank(K) - 2$, there are exactly two faces $I$, $I'$ of rank $\rank(K)-1$ such that $J < I,I' < K$.\label{p4}
\end{enumerate}

An {\em automorphism} of $\poly$ is a permutation of the flags that preserves the order and the rank function.
The set of automorphisms of $\poly$ is a group called the {\em automorphism group} of $\poly$ and denoted by $\Gamma(\poly)$.
By properties (P3) and (P4), any automorphism of P is determined by its action
on a given flag.

If $\Gamma(\mathcal{P})$ has a unique orbit on the set of flags of $\mathcal{P}$, we say that $\mathcal{P}$ is {\em regular}.
In that case, fixing a {\em base flag} $\Phi$ yields a set $\{\rho_0, \ldots, \rho_{\rk-1}\}$ of involutions which generate $\Gamma(\mathcal{P})$, where for each $i=0, \ldots, \rk-1$, the involution $\rho_i$ maps $\Phi$ to its unique $i$-adjacent flag, that is the unique flag distinct from $\Phi$ whose $j$-elements with $j\neq i$ are the same as those of $\Phi$.
The {\em rotation subgroup} (or {\em even subgroup}) $\Gamma^+(\mathcal{P})$ of $\Gamma(\mathcal{P})$ of words of even length of $\rho_0, \ldots, \rho_{\rk-1}$ has index at most 2 in $\Gamma(\mathcal{P})$, and is generated by $\{\sigma_1, \ldots,  \sigma_{\rk-1}\}$, where $\sigma_i:=\rho_{i-1}\rho_i$. If $\Gamma^+(\mathcal{P})$ has index 2 in $\Gamma(\mathcal{P})$ then we say that $\mathcal{P}$ is {\em directly regular}.
In this case there is an involution $\alpha \in \Aut{\Gamma(\mathcal{P})}$ such that $\alpha(\sigma_1) = \sigma_1^{-1}$, $\alpha(\sigma_2) = \sigma_1^2\sigma_2$ and $\alpha(\sigma_i) = \sigma_i$, for $i=3, \ldots, \rk-1$.

A polytope $\mathcal{P}$ is {\em chiral} if its automorphism group $\Gamma(\mathcal{P})$ has two orbits on the set of flags, and adjacent flags lie in different orbits. 
The automorphism groups of abstract chiral polytopes have been characterised in~\cite{Schulte}.

Following~\cite{Schulte}, the automorphism group of a chiral polytope $\poly$ is generated by some elements $\sigma_{i}$ which satisfy the following relations.
  \begin{align}
\sigma_{i}^{p_i}&=1,&&\text{for $1\leq i\leq \rk-1$},\tag{C1}\label{C1}\\
(\sigma_{i}\cdots\sigma_{j})^2&=1,&&\text{for $1\leq i<j\leq \rk-1$},\label{C2}\tag{C2}
	\end{align}
  where the $p_i$'s are given by the {\em Schl\"{a}fli symbol} $\{p_1,\ldots,p_{\rk-1}\}$ of $\mathcal{P}$.
  For $1\leq i\leq j\leq n-1$, let us define $\sigma_{i,j}:=\sigma_{i}\sigma_{i+1}\cdot\ldots\cdot\sigma_{j}$ and for $0\leq i\leq n$, define $\sigma_{1,i}:=\sigma_{i,n}:=1$.
  We have the subsequent intersection property: for $I,J\subset\set{-1,\ldots,\rk}$,
\begin{align}
\gen{\sigma_{i,j}|i\leq j,i-1,j\in I}\cap\gen{\sigma_{i,j}|i\leq j,i-1,j\in J}=\gen{\sigma_{i,j}|i\leq j,i-1,j\in I\cap J}.
\tag{C3}\label{C3}
\end{align}
In the rank four case, the intersection property is equivalent to
\begin{align}
\gen{\sigma_{1}}\cap\gen{\sigma_{2}}=\{\epsilon\},&&\gen{\sigma_{2}}\cap\gen{\sigma_{3}}=\{\epsilon\}&&\text{and}&&\gen{\sigma_{1},\sigma_{2}}\cap\gen{\sigma_{2},\sigma_{3}}=\gen{\sigma_{2}}.\tag{C3'}\label{C3'}
\end{align}

A group $\Gamma$ is the automorphism group of a chiral polytope if and only if it is generated by at least two elements satisfying the relations (C1), (C2) and (C3), for some finite $p_i\geq 2$, and provided the fact that there is no involutory automorphism $\alpha\in\Aut{\Gamma(\poly)}$ such that $\alpha(\sigma_{1})=\sigma_{1}^{-1}$, $\alpha(\sigma_2)=\sigma_1^2\sigma_2$ and $\alpha(\sigma_{i})=\sigma_{i}$, for all $i=3,\cdots,\rk-1$.
  
  The automorphism group of a face or a coface of a chiral polytope contains a subset of the generating rotations, which satisfies the conditions \eqref{C1} to \eqref{C3}.
  Hence a face or coface of a chiral polytope is a chiral or directly regular polytope and the rotations generate the whole automorphism group or the rotation subgroup respectively (see~\cite[Proposition 4(a)]{Schulte}).
  Since an $(\rk-2)$-face of a chiral polytope contains an automorphism mapping one of its flags to an adjacent flag, it cannot be chiral and must then be directly regular.
  
  \begin{lem}\cite[Proposition 9]{Schulte}\label{lem:rank3facesAreDirReg}
  The $(\rk-2)$-faces and the cofaces of edges of a chiral $\rk$-polytope are directly regular polytopes.
  \label{lem:rank3-faces_are_directly_regular}
\end{lem}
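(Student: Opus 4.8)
The plan is to verify, for each of these sections, the algebraic criterion for direct regularity recalled above: it suffices to produce an involutory automorphism $\alpha$ of its rotation group with $\alpha(\sigma_1)=\sigma_1^{-1}$, $\alpha(\sigma_2)=\sigma_1^2\sigma_2$ and $\alpha(\sigma_i)=\sigma_i$ for $i\geq 3$ (where the $\sigma_i$ denote the generating rotations of the section), since by the characterisation of chirality recalled above a chiral polytope admits no such automorphism, so exhibiting one rules out chirality. By \cite[Proposition 4(a)]{Schulte} an $(\rk-2)$-face $G$ of a chiral $\rk$-polytope $\poly$ is a polytope, chiral or directly regular, with rotation group $W:=\gen{\sigma_1,\ldots,\sigma_{\rk-3}}\leq\Gamma(\poly)=\gen{\sigma_1,\ldots,\sigma_{\rk-1}}$, and likewise the coface of an edge is such a polytope with rotation group $\gen{\sigma_3,\ldots,\sigma_{\rk-1}}$; thus it remains only to construct $\alpha$ in each case.

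For the $(\rk-2)$-face I would take the element $\tau:=\sigma_1\sigma_2\cdots\sigma_{\rk-1}\in\Gamma(\poly)$, which is an involution by \eqref{C2} (with $i=1$, $j=\rk-1$), and claim that conjugation by $\tau$ stabilises $W$ and induces on it exactly the map $\alpha$. The cleanest way to organise the verification is to introduce the auxiliary elements $\eta_j:=\sigma_j\sigma_{j+1}\cdots\sigma_{\rk-1}$: by \eqref{C2} each $\eta_j$ with $1\leq j\leq\rk-2$ is an involution, one has $\sigma_j=\eta_j\eta_{j+1}$ for $j\leq\rk-3$ and $\tau=\eta_1$, and the commutations $\eta_i\eta_j=\eta_j\eta_i$ for $|i-j|\geq2$ follow from \eqref{C1}--\eqref{C2}. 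A short computation with these involutions then yields $\tau\sigma_1\tau=\sigma_1^{-1}$, $\tau\sigma_2\tau=\sigma_1^2\sigma_2$ and $\tau\sigma_k\tau=\sigma_k$ for $3\leq k\leq\rk-3$; in particular $\tau$ normalises $W$, the induced automorphism $\alpha$ has the required form, and $\alpha^2$ fixes every generator, so $\alpha$ is an involution. Hence $G$ is not chiral and is therefore directly regular.

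Conceptually, $\tau$ induces the enantiomorphism of $G$ while interchanging the two facets of $\poly$ containing $G$ (there are exactly two, by the diamond condition applied to the rank-one section $F_{\rk}/G$), so that, although the reflection of $G$ it realises does not extend to an automorphism of the chiral polytope $\poly$, the combined map $\tau$ is orientation-preserving on $\poly$ and thus a genuine element of $\Gamma(\poly)$; this is exactly why the statement is special to corank-two sections. For the coface of an edge I would then argue by duality: the dual $\poly^{*}$ of a chiral polytope is again chiral, under duality the cofaces of edges of $\poly$ correspond to the $(\rk-2)$-faces of $\poly^{*}$, and direct regularity is a self-dual property, so this case reduces to the one already settled.

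The step I expect to be the main obstacle is the computation in the second paragraph: one must check not merely that $\tau$ normalises $W$, but that conjugation by it produces precisely the normalised enantiomorphism $\sigma_2\mapsto\sigma_1^2\sigma_2$ rather than some other orientation-reversing automorphism. Recasting the manipulation through the involutions $\eta_j$, which play the role of the abstract reflections of the face, is what makes both this identification and the check that $\alpha$ is the desired involution routine.
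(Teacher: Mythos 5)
Your argument is correct and is essentially the argument the paper relies on: the paper simply cites \cite[Proposition 9]{Schulte} and justifies it in one line by observing that an $(\rk-2)$-face admits an automorphism mapping one of its flags to an adjacent flag --- that automorphism is exactly your $\tau=\sigma_1\sigma_2\cdots\sigma_{\rk-1}$, and your computation with the involutions $\eta_j$ (which checks out: $\eta_i\eta_j=\sigma_i\cdots\sigma_{j-1}$ is an involution by \eqref{C2} when $j-i\geq2$, giving the needed commutations) is just the algebraic verification of that observation. The duality reduction for the cofaces of edges is likewise standard and fine.
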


The work done on the existence of a regular polytope for the almost simple groups with socle $\PSL{q}$ will be very useful in helping us to determine if a group is the rotation subgroup of a directly regular polytope or the full automorphism group of a chiral polytope.

\begin{thm}\label{RegRank4}\cite{DiJuTho,LS2005a,LS2008a}
Let $q\geq4$ and $\Autgp$ be an almost simple group with socle $\PSL{q}$.
The group $\Autgp$ is the automorphism group of a regular polytope of rank four if and only if $\Autgp$ is one of $\PGL{5}$, $\PSL{11}$ $\PSL{19}$ or $\Sigma(2,p^{2d})$ (the subgroup of $P\Gamma L(2,q)$ generated by all its $PSL$-involutions and Baer involutions) where $d\geq1$ and $p^{2d}\geq 9$.
\end{thm}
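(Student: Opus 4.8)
\emph{Sufficiency.}
For each group in the list I would exhibit four involutions $\rho_0,\rho_1,\rho_2,\rho_3$ and verify directly the string relations (C1), (C2) together with the intersection property (C3'). The three exceptional groups are realised by classical polytopes: $\PGL{5}\cong\Sym{5}$ by the $4$-simplex $\set{3,3,3}$, whose facet and vertex figure are both tetrahedra with group $\Sym{4}\leq\Sym{5}$; $\PSL{11}$ by the $11$-cell $\set{3,5,3}$ and $\PSL{19}$ by the $57$-cell $\set{5,3,5}$, where in each case the facet and vertex figure are the (hemi-)icosahedron and (hemi-)dodecahedron with group $\Alt{5}$, and $\Alt{5}$ embeds in both $\PSL{11}$ and $\PSL{19}$ since $11\equiv1$ and $19\equiv-1\pmod{5}$. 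For the infinite family I would place $\rho_0,\rho_1,\rho_2$ in a subfield subgroup isomorphic to $\PSL{p^d}$ (or a small extension of it) and take $\rho_3$ to be a Baer involution commuting with $\rho_0$ and $\rho_1$; the remaining task is to check that these four involutions generate exactly the subgroup $\Sigma(2,p^{2d})$ of $\PGaL{p^{2d}}$ and satisfy (C3').

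\emph{Necessity: the structural reduction.}
Assume $\Autgp$ is almost simple with socle $\PSL{q}$ and admits a string C-group representation of rank $4$ with involutory generators $\rho_0,\ldots,\rho_3$, so that $\PSL{q}\leq\Autgp\leq\PGaL{q}$ and $\rho_0$ commutes with $\rho_2$ and $\rho_3$ while $\rho_1$ commutes with $\rho_3$. The key reduction is that in any string C-group the facet subgroup $F=\gen{\rho_0,\rho_1,\rho_2}$ and the vertex-figure subgroup $V=\gen{\rho_1,\rho_2,\rho_3}$ are themselves string C-groups of rank $3$ (see~\cite{McMullen}), hence automorphism groups of regular polyhedra; for a genuine rank-$4$ polytope they are \emph{proper} subgroups of $\Autgp$ meeting in the dihedral group $\gen{\rho_1,\rho_2}$. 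I would then invoke Dickson's description of the subgroups of $\PSL{q}$ and of its almost simple overgroups to enumerate the candidates for $F$ and $V$: each is one of $\Alt{4}$, $\Sym{4}$, $\Alt{5}$, a dihedral group, or a subfield/Baer subgroup of type $\PSL{q_0}$ or $\PGL{q_0}$ carrying a rank-$3$ string C-group structure.

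\emph{The core case analysis.}
The remaining problem is to pair a facet of type $\set{p_1,p_2}$ with a vertex figure of type $\set{p_2,p_3}$ sharing the middle entry $p_2$, subject to $[\rho_0,\rho_3]=1$, and to determine when $\gen{F,V}$ fills out all of $\Autgp$. I would control this through the involutions of $\PSL{q}$ and $\PGL{q}$: for $q$ odd there is essentially one class of $PSL$-involutions, with dihedral centraliser of order $q\mp1$, together with the $PGL$-involutions and, when $q$ is a square, the Baer involutions, and every commuting pair of involutions lies in a Klein four-subgroup inside these centralisers. Conjugacy and order considerations then confine $p_2$ and the ambient subgroups to a short list: two copies of $\Alt{5}$ glued along a dihedral group of order $10$ resp.\ $6$ reproduce the $11$-cell and the $57$-cell and single out $\PSL{11}$ and $\PSL{19}$, while two copies of $\Sym{4}$ give the simplex and $\PGL{5}$. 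Every other pairing either violates (C3') or generates merely a subfield subgroup rather than all of $\Autgp$, unless $\rho_3$ is a Baer involution, which is precisely the configuration producing the family $\Sigma(2,p^{2d})$.

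\emph{Main obstacle.}
I expect the genuine difficulty to lie in the necessity direction for large $q$: one must show, uniformly in $q$, that no further rank-$4$ quotients arise beyond the four listed types. The delicate point is that the commutation relations and the intersection property interact with the several species of involutions available in $\PGaL{q}$ (split, diagonal, Baer, and those involving the Frobenius automorphism), and that the surviving infinite family is visible only through the Baer involutions. Keeping this case analysis simultaneously exhaustive and free of spurious solutions is where I would lean most heavily on the detailed subgroup lattice of $\PSL{q}$.
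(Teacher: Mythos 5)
A preliminary remark: the paper does not prove this theorem at all — it is imported verbatim from \cite{DiJuTho,LS2005a,LS2008a} — so there is no internal proof to measure your attempt against, only the arguments in those references. Your outline does track their strategy: reduce to the facet subgroup $\gen{\rho_0,\rho_1,\rho_2}$ and vertex-figure subgroup $\gen{\rho_1,\rho_2,\rho_3}$, which are rank-$3$ string C-groups and hence constrained by Dickson's subgroup classification, then analyse how two such subgroups can be glued along the dihedral group $\gen{\rho_1,\rho_2}$ inside an almost simple group with socle $\PSL{q}$, with the Baer involutions accounting for the surviving infinite family $\Sigma(2,p^{2d})$. The identification of the three sporadic examples with the $4$-simplex, the $11$-cell and the $57$-cell is also correct.

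As a proof, however, the proposal has a genuine gap: the entire content of the theorem sits in the step you describe as ``conjugacy and order considerations then confine $p_2$ and the ambient subgroups to a short list,'' and that step is asserted rather than performed. Nothing in the sketch rules out, for instance, a facet group and a vertex-figure group that are both subfield subgroups of type $\PSL{p^e}$ or $\PGL{p^e}$ carrying rank-$3$ string C-group structures (such structures exist for most $q$), glued along a common dihedral subgroup so as to generate all of $\Autgp$; excluding every such configuration uniformly in $q$ is exactly the hard, quantitative part of \cite{LS2005a,DiJuTho}, and it requires the detailed centraliser and normaliser computations for the several species of involutions in $\PGaL{q}$ together with explicit intersection-property checks — none of which appear here. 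On the sufficiency side, the infinite family is left as ``the remaining task is to check'': verifying that your proposed $\rho_3$ (a Baer involution commuting with $\rho_0$ and $\rho_1$) yields the string relations, generates exactly $\Sigma(2,p^{2d})$, and satisfies (C3') is precisely where such a construction can fail for small parameters (note the hypothesis $p^{2d}\geq 9$ in the statement). The architecture is right, but the exhaustive case analysis and the explicit verifications \emph{are} the theorem, and they are missing.
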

  
\subsection{Projective linear groups}\label{sec:inter}

The projective linear group $\PGL{q}$ is defined for any prime power $q$ as the quotient
\begin{align*}
\PGL{q}:=\GL{q}/\Z{\GL{q}},
\end{align*}
where $\GL{q}=\GL{\GF{q}}$ and $\Z{\GL{q}}$ is the centre of $\GL{q}$.
Moreover, the projective special linear group $\PSL{q}$ is defined as the quotient
\begin{align*}
\PSL{q}:=\SL{q}/\Z{\SL{q}}.
\end{align*}
So we can write each element of $\PGL{q}$ or $\PSL{q}$ as a matrix up to a factor, that is, they are defined up to the multiplication by a scalar matrix of $\GL{q}$ or $\SL{q}$ respectively.
In this paper, each time we write an element of $\PGL{q}$ or $\PSL{q}$ in a matrix form, it is understood that this element is in fact an equivalence class.
One can see this matrix as a representative of the coset of the quotient group.

The groups $\PGL{q}$ and $\PSL{q}$ act via the fractional transformations on the projective line $\PG{q}$ of $q+1$ points.
The stabiliser of a point of the projective line $\PG{q}$ in $\PGL{q}$ is isomorphic to $E_q\rtimes\C{q-1}$.
If $q$ is odd, the stabiliser in $\PSL{q}$ of a point is isomorphic to $E_q\rtimes\C{(q-1)/2}$ and if $q$ is even, then $\PSL{q} = \PGL{q}$.
Moreover, the pointwise stabiliser of two points in $\PGL{q}$ is a cyclic group of order $q-1$ whereas the pointwise stabiliser of two points in $\PSL{q}$ is a cyclic group of order $(q-1)/2$ if $q$ is odd.

We continue this section by the following straightforward lemma.
\begin{lem}\label{lem:order3}
For any prime power $q$, if $\sigma\in\PGL{q}$ is an element of order $3$, then $\sigma\in\PSL{q}$.
\end{lem}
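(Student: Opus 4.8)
The plan is to reduce the statement to the index of $\PSL{q}$ inside $\PGL{q}$. First I would dispose of the case where $q$ is even: there $\PSL{q}=\PGL{q}$, as recalled in the previous subsection, so every element of $\PGL{q}$ already lies in $\PSL{q}$ and there is nothing to prove. I therefore assume $q$ odd, in which case $\PSL{q}$ is a normal subgroup of $\PGL{q}$ and the quotient $\PGL{q}/\PSL{q}$ is cyclic of order $2$.

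The main line of argument is then a coprimality observation. Given $\sigma\in\PGL{q}$ of order $3$, I would look at its image $\bar{\sigma}$ under the canonical projection $\PGL{q}\to\PGL{q}/\PSL{q}$. The order of $\bar{\sigma}$ divides the order of $\sigma$, hence divides $3$; but $\bar{\sigma}$ lives in a group of order $2$, so its order also divides $2$. Since $\gcd(2,3)=1$, the order of $\bar{\sigma}$ equals $1$, so $\bar{\sigma}$ is trivial and $\sigma\in\PSL{q}$.

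If an explicit matrix verification is preferred, I would instead use the determinant criterion: the class of $A\in\GL{q}$ belongs to $\PSL{q}$ precisely when $\det A$ is a square in $\GF{q}^\times$, a condition that does not depend on the chosen representative since rescaling $A$ alters $\det A$ by a square. Writing the relation $\sigma^3=1$ in $\PGL{q}$ as $A^3=\mu I$ for a scalar $\mu$ and taking determinants yields $(\det A)^3=\mu^2$, which is a square; as $(\det A)^2$ is automatically a square, $\det A$ itself is a square, and once more $\sigma\in\PSL{q}$. There is no genuine obstacle in this lemma: the only point requiring care is the standard fact that the quotient $\PGL{q}/\PSL{q}$ has order exactly $2$ when $q$ is odd, and I would present the coprimality argument as the primary proof since it is shortest and coordinate-free.
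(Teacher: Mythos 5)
Your proof is correct; the paper states this lemma without proof, calling it straightforward, and your coprimality argument (an element of order $3$ must map to the identity in the order-$2$ quotient $\PGL{q}/\PSL{q}$ when $q$ is odd, the even case being trivial since then $\PSL{q}=\PGL{q}$) is exactly the intended justification. The determinant verification you add is a valid alternative but not needed.
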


The maximal subgroups of the projective general linear groups  and projective special linear groups have been classified for example in \cite{MaxSub} (see also~\cite{MR0104735}) where a proof of the following proposition can be found.
\begin{prop}
  Let $q=p^d>3$ be an odd prime power.
  The maximal subgroups of $\PSL{q}$ are given by the following list:
  \begin{itemize}
    \item $E_q\rtimes C_{(q-1)/2}$ (the stabilizer of a point of the projective line),
    \item $\D{q-1}$, if $q\geq 13$,
    \item $\D{q+1}$, if $q\not=7,9$,
    \item $A_5$, if $q=p\equiv\pm1\pmod{10}$ or $q=p^2\equiv9\pmod{10}$,
    \item $A_4$, if $q=p\equiv\pm3,5,\pm13\pmod{40}$,
    \item $S_4$, if $q=p\equiv\pm1\pmod8$,
    \item $\PGL{\sqrt{q}}$, if $d$ is even,
    \item $\PSL{p^e}$, if $d=er$, where $r$ is an odd prime.
  \end{itemize}
%
  The maximal subgroups of $\PGL{q}$ are in given by the following list:
  \begin{itemize}
    \item $E_q\rtimes C_{q-1}$ (the stabilizer of a point of the projective line),
    \item $\D{2(q-1)}$, if $q\not=5$,
    \item $\D{2(q+1)}$,
    \item $S_4$, if $q=p\equiv\pm3\pmod8$,
    \item $\PGL{p^e}$, if $d=er$, where $r$ is an odd prime,
    \item $\PSL{q}$.
  \end{itemize}
  If $q=2^d\geq 4$, the maximal subgroups of $\PSL{q}$ are given by the following list:
  \begin{itemize}
    \item $E_q\rtimes C_{q-1}$ (the stabilizer of a point of the projective line),
    \item $\D{2(q\pm1)}$,
    \item $\PSL{q} = \PGL{2^e}$, if $d=er$, where $r$ is a prime and $e\not=1$.
  \end{itemize}
  \label{prop:max_sub}
 \end{prop}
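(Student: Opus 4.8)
The plan is to follow Dickson's classical analysis of subgroups of $\PSL{q}$, exploiting the sharply $2$-transitive action on the projective line $\PG{q}$ of $q+1$ points. First I would classify the cyclic subgroups according to the type of a generator. Reading off the characteristic polynomial of a representative matrix, every non-identity element is conjugate to a \emph{unipotent} element (order $p$, fixing exactly one point of $\PG{q}$), a \emph{split} semisimple element (fixing two points of $\PG{q}$, of order dividing $(q-1)/\gcd(2,q-1)$), or a \emph{nonsplit} semisimple element (fixing a conjugate pair of points over $\GF{q^2}\setminus\GF{q}$, of order dividing $(q+1)/\gcd(2,q-1)$). This trichotomy pins down the Sylow $p$-subgroup $\E{q}$ and the two maximal tori, and it already produces the three ``generic'' families of the statement: the point stabiliser (Borel subgroup) $\E{q}\rtimes\C{(q-1)/2}$, and the dihedral normalisers $\D{q-1}$ and $\D{q+1}$ of the split and nonsplit tori.

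The main classification then dichotomises on whether $p$ divides $\order{H}$ for a subgroup $H\le\PSL{q}$. If $p\mid\order{H}$, then $H$ contains unipotent elements, and I would study its Sylow $p$-subgroup $P$. Either $P$ is normal in $H$, forcing $H$ into a Borel subgroup and hence into a conjugate of $\E{q}\rtimes\C{(q-1)/2}$; or $P$ is not normal, in which case a counting argument on Sylow intersections shows $H$ is generated by unipotent elements of two distinct Borel subgroups and is therefore a subfield subgroup $\PSL{p^e}$ or $\PGL{p^e}$ with $e\mid d$. If $p\nmid\order{H}$, then $H$ acts on $\PG{q}$ with orbits constrained by the torus structure, and such groups reduce to the classical list of finite subgroups of $\PGL{\mathbb{C}}$: cyclic, dihedral, and the three exceptional groups $\Alt{4}$, $\Sym{4}$, $\Alt{5}$. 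Their embeddability is governed by divisibility conditions: $\Alt{5}$ needs an element of order $5$, hence $5\mid q^2-1$, giving $q\equiv\pm1\pmod{10}$ (with the quadratic variant $q=p^2\equiv9\pmod{10}$), while $\Sym{4}\le\PSL{q}$ requires $\sqrt2\in\GF{q}$, i.e. $q\equiv\pm1\pmod8$; in each case one checks, via traces, whether the copy lands in $\PSL{q}$ or only in $\PGL{q}$.

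Finally I would sift this complete list for its maximal members by testing containments and reading off the congruence conditions under which each candidate is not properly contained in another. The subfield subgroups $\PSL{p^e}$ are maximal exactly when $d/e$ is an odd prime; the index-$2$ case (for even $d$) instead contributes $\PGL{\sqrt q}$, since $\PSL{\sqrt q}\le\PGL{\sqrt q}\le\PSL{q}$. Among the exceptional groups one uses $\Alt{4}\le\Alt{5}$ and $\Alt{4}\le\Sym{4}$, so $\Alt{4}$ is maximal only when neither a larger $\Sym{4}$ nor $\Alt{5}$ is present, which pins the residues $q=p\equiv\pm3,5,\pm13\pmod{40}$; the dihedral thresholds ($q\neq7,9$ for $\D{q+1}$, $q\geq13$ for $\D{q-1}$) likewise arise because the small dihedral normalisers are swallowed by an exceptional subgroup. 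The parallel bookkeeping in $\PGL{q}$ — where $\PSL{q}$ itself is an index-$2$ maximal subgroup, the tori double to $\D{2(q\pm1)}$, and only $\Sym{4}$ survives (for $q=p\equiv\pm3\pmod8$) — and the shorter even-characteristic list (where $\PSL{q}=\PGL{q}$, the tori are odd-order cyclic, and the exceptional groups collapse beyond the coincidence $\PSL{4}\cong\Alt{5}$) complete the three lists.

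The hard part will be this last step: the exceptional subgroups and their exact arithmetic. Establishing precisely when $\Alt{4}$, $\Sym{4}$, $\Alt{5}$ embed, when each embedding lies in $\PSL{q}$ rather than only $\PGL{q}$, and — most delicately — when such a copy is maximal rather than buried inside a subfield subgroup or a larger exceptional group, requires careful trace computations and the resolution of several quadratic-residue conditions modulo $8$, $10$ and $40$. By contrast, the $p$-local analysis and the Borel and dihedral families are comparatively routine once the element trichotomy is in place.
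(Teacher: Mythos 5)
The paper does not actually prove this proposition: it is quoted from the literature, with the proof delegated to Bray--Holt--Roney-Dougal \cite{MaxSub} and ultimately to Dickson \cite{MR0104735}. Your outline is exactly the classical Dickson-style argument that those references carry out --- the trichotomy of elements into unipotent, split and nonsplit semisimple types, the dichotomy on whether $p$ divides $\order{H}$ (normal Sylow $p$-subgroup forcing $H$ into a Borel, non-normal Sylow forcing a subfield subgroup; $p'$-subgroups reducing to cyclic, dihedral, $\Alt{4}$, $\Sym{4}$, $\Alt{5}$), followed by a maximality sift. So you are reconstructing the cited proof rather than taking a different route, and at the level of strategy your reconstruction is sound and buys a self-contained argument where the paper offers only a pointer.

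Three caveats on the details. First, $\PSL{q}$ is $2$-transitive but not \emph{sharply} $2$-transitive on $\PG{q}$ (the two-point stabiliser is $\C{(q-1)/2}$, not trivial; it is $\PGL{q}$ that is sharply $3$-transitive) --- a slip, though harmless since you use the correct torus orders later. Second, the claim that a non-normal Sylow $p$-subgroup forces a subfield subgroup needs the proviso $p>5$, or the observation that for $p\le 5$ the exceptional groups arising there are themselves subfield subgroups ($\Alt{4}\cong\PSL{3}$, $\Sym{4}\cong\PGL{3}$, $\Alt{5}\cong\PSL{4}\cong\PSL{5}$), so the conclusion survives but the counting argument as stated does not. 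Third, and most importantly, the part you flag as ``the hard part'' --- the exact congruences mod $8$, $10$ and $40$ governing when $\Alt{4}$, $\Sym{4}$, $\Alt{5}$ embed, whether they land in $\PSL{q}$ or only $\PGL{q}$, and when they are maximal rather than absorbed into a subfield or larger exceptional subgroup --- is precisely where all the content of the proposition lives, and it is left undone. As it stands your proposal is a correct roadmap to the proof in \cite{MaxSub}, not yet a proof; since the paper itself treats the statement as a black box, that is a reasonable place to stop, but the deferred arithmetic should be acknowledged as the bulk of the work rather than a final check.
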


From Proposition \ref{prop:max_sub}, one can easily derive the following lemma: 
  \begin{lem}\label{lem:PGLinPSL}
    If $p$ is an odd prime and $\PGL{p^e}\leq\PSL{p^d}$, then $d/e$ is even.
  \end{lem}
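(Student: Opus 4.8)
The plan is to exploit the fact that $\PGL{p^e}$ contains the simple group $\PSL{p^e}$, so that---apart from the single solvable exception $\PGL{3}\cong\Sym{4}$---it is a non-solvable group, and, being a proper subgroup of the simple group $\PSL{p^d}$, it must lie inside one of the maximal subgroups listed in Proposition~\ref{prop:max_sub}. First I would record that the point stabiliser $E_q\rtimes\C{(q-1)/2}$, the dihedral groups $\D{q-1}$ and $\D{q+1}$, and $\Alt{4}$ are all solvable, hence cannot contain $\PGL{p^e}$ when $p^e\geq5$; and that $\Alt{5}$ and $\Sym{4}$, of orders $60$ and $24$, cannot contain $\PGL{p^e}$ once $p^e\geq5$, since then $|\PGL{p^e}|=p^e(p^{2e}-1)\geq120$. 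This leaves only the two families of subfield subgroups, namely $\PSL{p^f}$ with $d/f$ an odd prime, and $\PGL{\sqrt{q}}=\PGL{p^{d/2}}$ when $d$ is even.

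I would then establish, by simultaneous induction on $d$, the two statements: (i) if $\PGL{p^e}\leq\PSL{p^d}$ then $2e\mid d$; and (ii) if $\PGL{p^e}\leq\PGL{p^d}$ then $e\mid d$. For (i), the containing maximal subgroup is either some $\PSL{p^f}$ with $d=fr$ ($r$ an odd prime), in which case the inductive form of (i) gives $2e\mid f$ and hence $2e\mid d$; or it is $\PGL{p^{d/2}}$ with $d$ even, in which case (ii) applied at $d/2<d$ gives $e\mid d/2$, so that $d=2\,((d/2))$ and again $2e\mid d$. For (ii), the containing maximal subgroup of $\PGL{p^d}$ is either $\PGL{p^f}$ with $d=fr$ ($r$ an odd prime), where (ii) inductively gives $e\mid f\mid d$, or it is the index-two subgroup $\PSL{p^d}$, where statement (i) at the same $d$ yields $2e\mid d$ and in particular $e\mid d$. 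The crucial structural point driving the whole argument is that in the two maximal-subgroup lists the only place a factor of $2$ ever enters the field degree is through the entry $\PGL{\sqrt{q}}\leq\PSL{q}$; every subfield descent among like types ($\PSL$ into $\PSL$, or $\PGL$ into $\PGL$) changes the degree only by an odd prime.

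To see that the induction is well founded I would fix the order in which the statements are proved: for each $d$ in increasing order, prove (i) at $d$ first (it refers only to (i) and (ii) at smaller degrees) and then (ii) at $d$ (it refers to (ii) at smaller degrees and to (i) at the same $d$). The base case $d=1$ is vacuous for (i) and forces $e=1$ for (ii). Along the way one checks that each containment is proper, since $\PSL{p^d}$ is simple for $p^d\geq4$ while $\PGL{p^e}$ (having $\PSL{p^e}$ as an index-two subgroup) is never simple, so the two groups are never equal.

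The step I expect to be the main obstacle is the careful treatment of the exceptional case $p^e=3$, where $\PGL{3}\cong\Sym{4}$ is solvable and so escapes the non-solvable filter above: there (i) must be proved separately. Here I would argue directly that $\Sym{4}$ contains an element of order $4$, whereas every element of $\PSL{3^d}$ has order dividing $3$, $(3^d-1)/2$ or $(3^d+1)/2$; computing $3^d\pmod{8}$ shows that $4$ divides one of $(3^d\mp1)/2$ exactly when $d$ is even, so $\Sym{4}\leq\PSL{3^d}$ forces $d$ even, which is statement (i) in this case. This same idea suggests a uniform alternative to the whole induction: $\PGL{p^e}$ contains a cyclic group of order $p^e+1$, and once $e\mid d$ is known one checks that an element of this order embeds in $\PSL{p^d}$ only when $d/e$ is even; I would nonetheless keep the maximal-subgroup induction as the primary route, since it is the one flagged by Proposition~\ref{prop:max_sub}.
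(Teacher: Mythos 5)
Your proof is correct and takes essentially the approach the paper intends: the paper gives no explicit argument, asserting only that the lemma ``can easily be derived'' from Proposition~\ref{prop:max_sub}, and your double induction on the field degree through the subfield maximal subgroups (together with the separate element-order check for the solvable exception $\PGL{3}\cong\Sym{4}$) is a sound and complete way of carrying out that derivation.
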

	
  The existence of a subgroup $H$ isomorphic to $\PGL{p^f}$ such that $\PSL{p^e}\leq H\leq\PSL{p^d}$ can be guaranteed simply by looking at the parity of $d/f$, as stated in the following Lemma:
  \begin{lem}\label{lem:PSLinPSL}
	If $\PSL{p^e}\leq\PSL{p^d}$ and $d/e$ is even, then there exists $H\cong\PGL{p^f}$ such that $\PSL{p^e}\leq H\leq\PSL{p^d}$.
  \end{lem}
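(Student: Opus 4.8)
The plan is to take $f=e$ and $H\cong\PGL{p^e}$. Since $\PSL{p^e}\leq\PGL{p^e}$ always holds (the projective special linear group is by definition a subgroup of the projective general linear group over the same field), the only thing to establish is that a copy of $\PGL{p^e}$ can be found inside $\PSL{p^d}$ containing the prescribed subfield subgroup $\PSL{p^e}$. The hypothesis that $d/e$ is even means $2e\mid d$, so the argument splits into two containments: first $\PGL{p^e}\leq\PSL{p^{2e}}$, and then $\PSL{p^{2e}}\leq\PSL{p^d}$, the latter being the standard subfield inclusion for $2e\mid d$.

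If $p=2$ the statement is immediate, since then $\PSL{p^e}=\PGL{p^e}$ and one takes $H=\PSL{p^e}$; so assume $p$ is odd. The heart of the proof is the embedding $\PGL{p^e}\leq\PSL{p^{2e}}$, which I would obtain at the level of matrices. The subfield inclusion $\GF{p^e}\subseteq\GF{p^{2e}}$ induces $\GL{p^e}\leq\GL{p^{2e}}$ and hence a map $\PGL{p^e}\to\PGL{p^{2e}}$, and I claim its image lies in $\PSL{p^{2e}}$. Given $A\in\GL{p^e}$ with $\Det{A}=\delta\in\GF{p^e}^\ast$, the key observation is that $\delta$ is a square in $\GF{p^{2e}}$: since $\delta^{p^e-1}=1$ and $p^e$ is odd, we get $\delta^{(p^{2e}-1)/2}=(\delta^{p^e-1})^{(p^e+1)/2}=1$, so $\delta=\mu^2$ for some $\mu\in\GF{p^{2e}}^\ast$. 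Then $\mu^{-1}A\in\SL{p^{2e}}$ represents the same element of $\PGL{p^{2e}}$ as $A$, which shows every element in the image of $\PGL{p^e}$ already lies in $\PSL{p^{2e}}$.

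Combining this with the subfield chain $\PSL{p^{2e}}\leq\PSL{p^d}$ (valid because $2e\mid d$ and, for odd $p$, the centre $\Z{\SL{p^a}}=\{\pm I\}$ is the same across all subfields, so the inclusion of special linear groups descends to the projective quotients) yields $\PSL{p^e}\leq\PGL{p^e}\leq\PSL{p^{2e}}\leq\PSL{p^d}$, giving the desired $H$. The one point requiring care is that the hypothesis provides an abstract subfield subgroup $\PSL{p^e}\leq\PSL{p^d}$, whereas the construction above produces the \emph{standard} such subgroup; I expect the main obstacle to be reconciling these, which I would handle by invoking the fact that all subfield subgroups isomorphic to $\PSL{p^e}$ are conjugate in $\PSL{p^d}$ (indeed in $\PGL{p^d}$, which normalises $\PSL{p^d}$), so conjugating the whole chain moves $H$ to contain the prescribed copy. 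Apart from this, the argument reduces to the elementary determinant computation in the quadratic extension.
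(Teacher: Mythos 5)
Your proof is correct, but it takes a genuinely different route from the paper's. The paper argues by contradiction: assuming no such $H$ exists, it takes a maximal chain of subgroups from $\PSL{p^e}$ up to $\PSL{p^d}$, notes via the classification of maximal subgroups (Proposition~\ref{prop:max_sub}) that every link must then be some $\PSL{p^{f_i}}$ maximal in $\PSL{p^{f_{i+1}}}$ with $f_{i+1}/f_i$ an odd prime, and concludes that $d/e$ is a product of odd integers, contradicting the parity hypothesis. You instead construct the witness explicitly: $H\cong\PGL{p^e}$ sitting inside $\PSL{p^{2e}}\leq\PSL{p^d}$, with the containment $\PGL{p^e}\leq\PSL{p^{2e}}$ coming from the fact that every element of $\GF{p^e}^{\ast}$ is a square in $\GF{p^{2e}}^{\ast}$; your exponent computation is right, and $(p^e+1)/2$ is an integer precisely because $p$ is odd, which is where the even-characteristic case correctly splits off. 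Your route is more self-contained --- it needs only the conjugacy of subfield subgroups rather than the full maximal-subgroup classification --- and it yields slightly more, namely that one may always take $f=e$ and locate $H$ inside the quadratic extension step $\PSL{p^{2e}}$. The paper's route is shorter on the page because Proposition~\ref{prop:max_sub} is already stated, and it uses the same index-parity bookkeeping that reappears in the rank-$5$ argument. Your handling of the remaining issue --- transporting the standard chain onto the prescribed copy of $\PSL{p^e}$ by conjugation with an element that normalises $\PSL{p^d}$ --- is the right fix, and is exactly the point a purely ``standard subgroup'' construction would otherwise miss.
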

  \begin{proof}
	Suppose there is no such subgroup $H\cong\PGL{p^f}$.
	Consider a chain $\PSL{p^e}=H_0\leq H_1\leq\cdots\leq H_k\leq H_{k+1}=\PSL{p^d}$ where for $i=0,\ldots,k$, each $H_i\cong\PSL{p^{f_i}}$ is maximal in $H_{i+1}\cong\PSL{p^{f_{i+1}}}$.
	Then $f_{i+1}/f_i$ is odd and $d/e=(d/f_{k+1})(f_{k+1}/f_k)\cdots(f_1/e)$ is a product of odd integers, which is odd too, a contradiction.
  \end{proof}
  
  Throughout this paper, the greatest common divisor of two integers $e$ and $f$ will be denoted by $\Gcd{e,f}$.
  
  \begin{prop}\label{prop:interSubf}
Let $H_1\cong\PSL{p^{e_1}}$ or $\PGL{p^{e_1}}$ and $H_2\cong\PSL{p^{e_2}}$ or $\PGL{p^{e_2}}$ be subfield subgroups of $\PSL{p^d}$.
For $i=1,2$, let us denote $H_i'$ the unique subgroup of $H_i$ isomorphic to $\PSL{p^{e_i}}$.
If $H_1'\cap H_2'$ contains a dihedral subgroup of order $2k$ with $(k,p)\not=(2,2)$, then $H_1\cap H_2$ is a subfield subgroup.
More precisely,
\begin{enumerate}
\item if $H_1\cong\PSL{p^{e_1}}$ and $H_2\cong\PSL{p^{e_2}}$, then 
\begin{align*}
H_1\cap H_2\cong\PSL{p^{\Gcd{e_1,e_2}}};
\end{align*}
\item if $H_1\cong\PGL{p^{e_1}}$ and $H_2\cong\PGL{p^{e_2}}$, then 
\begin{align*}
H_1\cap H_2\cong\PGL{p^{\Gcd{e_1,e_2}}};
\end{align*}
\item if $H_1\cong\PSL{p^{e_1}}$ and $H_2\cong\PGL{p^{e_2}}$, then 
\begin{align*}
H_1\cap H_2\cong\begin{cases}\PGL{p^{\Gcd{e_1,e_2}}}&\text{if $\frac{e_1}{\Gcd{e_1,e_2}}$ is even}\\
							\PSL{p^{\Gcd{e_1,e_2}}}&\text{otherwise}\end{cases}
\end{align*}
\end{enumerate}
\end{prop}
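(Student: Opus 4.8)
The plan is to work geometrically, through the natural action of $\PSL{p^d}$ on the projective line $\PG{p^d}$, encoding each subfield subgroup by the $\GF{p^{e_i}}$-rational structure (the ``subline'') that it preserves. Lifting to $\SL{p^d}$, write $V=\GF{p^d}^2$ for the natural module; the preimage of $H_i'$ preserves a $\GF{p^{e_i}}$-form $W_i\subseteq V$, and $H_i'$ acts on the corresponding subline $\mathbb{P}(W_i)\subseteq\PG{p^d}$ as $\PSL{p^{e_i}}$. The whole point will be to show that the shared dihedral subgroup forces $W_1$ and $W_2$ to be scalar extensions of one common smaller form, so that the two sublines meet in the subline over $\GF{p^{\Gcd{e_1,e_2}}}$.

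First I would record the role of the hypothesis $(k,p)\neq(2,2)$: it is exactly the condition that the dihedral group $D:=D_{2k}\leq H_1'\cap H_2'$ is absolutely irreducible on $V$. Its cyclic rotation subgroup is a semisimple torus (in characteristic $2$ the tori have odd order, so $k$ is odd and the only reducible dihedral group, the Klein four-group built from transvections, is ruled out), hence over $\overline{\GF p}$ it has two distinct eigenlines which the reflections interchange; thus $D$ fixes no line, even over the algebraic closure.

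Then comes the heart of the argument, and the step I expect to be the main obstacle: a descent statement. Since $D$ is absolutely irreducible, its natural $\SL{p^d}$-module has a well-defined minimal field of definition, the trace field $\GF{p^{m_0}}$ with $m_0\mid\Gcd{e_1,e_2}$ (because $\Tr{g}\in\GF{p^{e_1}}\cap\GF{p^{e_2}}$ for every lift $g$ of an element of $D$). Absolute irreducibility makes the endomorphism algebra of this module just the scalars, so by a Hilbert~90 / Galois-descent argument the $\GF{p^{m_0}}$-form $V_0$ on which $D$ acts is unique up to a scalar; consequently each $D$-invariant form $W_i$ must equal $V_0\otimes_{\GF{p^{m_0}}}\GF{p^{e_i}}$ up to scalar, and the subline $\mathbb{P}(W_i)$ is uniquely determined. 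This is what rules out the ``accidental'' configurations: a priori a torus element of $D$ can fix points off one subline (when it is elliptic there) while fixing points of the other, so one cannot simply chase fixed points; the descent argument bypasses this by pinning down the forms themselves.

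Once both forms descend from the same $V_0$, the endgame is routine linear algebra over subfields. An element of $H_1'\cap H_2'$ has representative matrices that are simultaneously $\GF{p^{e_1}}$- and $\GF{p^{e_2}}$-rational; comparing entrywise ratios (or, for the $\PSL$ parts, normalising to determinant $1$ so that two representatives differ only by $\pm1$) shows it is represented over $\GF{p^{\Gcd{e_1,e_2}}}$, giving $H_1'\cap H_2'\cong\PSL{p^{\Gcd{e_1,e_2}}}$. Finally I would upgrade from $H_i'$ to $H_i$ and split into the three cases. The $\PSL/\PSL$ and $\PGL/\PGL$ cases follow at once (in the latter, the entry-ratios lie in $\GF{p^{\Gcd{e_1,e_2}}}$, so the common representative is $\GF{p^{\Gcd{e_1,e_2}}}$-rational and one gets $\PGL{p^{\Gcd{e_1,e_2}}}$). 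In the mixed case every element already lies in $\PGL{p^{\Gcd{e_1,e_2}}}$ with some representative $N$, and membership in $H_1\cong\PSL{p^{e_1}}$ says precisely that $\Det{N}$ is a square in $\GF{p^{e_1}}$; since a nonsquare of $\GF{p^{\Gcd{e_1,e_2}}}$ becomes a square in $\GF{p^{e_1}}$ exactly when $e_1/\Gcd{e_1,e_2}$ is even, this constraint is vacuous when $e_1/\Gcd{e_1,e_2}$ is even (yielding $\PGL{p^{\Gcd{e_1,e_2}}}$) and forces $\Det{N}$ to be a square in $\GF{p^{\Gcd{e_1,e_2}}}$ otherwise (yielding $\PSL{p^{\Gcd{e_1,e_2}}}$), which is exactly the claimed dichotomy.
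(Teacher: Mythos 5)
Your argument is correct in substance but takes a genuinely different route from the paper's. The paper stays purely group-theoretic: it uses the identity $\Gcd{p^{e_1}\pm1,p^{e_2}\pm1}\in\set{p^{\Gcd{e_1,e_2}}-1,p^{\Gcd{e_1,e_2}}+1}$ to place the dihedral group $D$ inside copies $K_i\cong\PSL{p^{\Gcd{e_1,e_2}}}$ of each $H_i'$, and then invokes the uniqueness result of \cite[Lemma 5.7]{JL2013} (there is a unique subfield $\PSL{p^f}$ over a given $D_{2k}$ with $k\mid\frac{p^f\pm1}{2}$) to conclude $K_1=K_2$; the three-case refinement is then asserted in a few lines. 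You replace that cited lemma by a self-contained descent argument: absolute irreducibility of the preimage of $D$ on the natural module, then Schur's lemma plus Hilbert 90 pin down the invariant $\GF{p^{e_i}}$-forms up to a scalar over a common trace-field form --- which is the same uniqueness statement in linear-algebra clothing, reproved rather than quoted. Your endgame via entry ratios and the square/nonsquare behaviour of determinants under $\GF{p^{\Gcd{e_1,e_2}}}\subseteq\GF{p^{e_1}}$ is actually more explicit than the paper's treatment of case (3): it derives the parity dichotomy rather than asserting the containments, and it supplies the upper bound $H_1\cap H_2\leq\PGL{p^{\Gcd{e_1,e_2}}}$ that the paper leaves implicit behind a ``clearly''. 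One caveat, which you essentially share with the paper: your justification of absolute irreducibility only rules out the reducible dihedral groups of characteristic $2$, but for $p$ odd with $p^{e_i}\equiv1\pmod 4$ the group $\PSL{p^{e_i}}$ also contains subgroups $D_{2p}$ whose rotation part is unipotent; these fix the unique fixed point of $C_p$ and are therefore reducible, so the hypothesis $(k,p)\neq(2,2)$ alone does not give irreducibility when $k=p$. The paper's proof excludes this case just as silently, via the condition $k\mid\frac{p^f\pm1}{2}$, and in every application one may choose $k$ coprime to $p$, so this is a defect of the proposition's hypotheses rather than of your argument --- but you should add $p\nmid k$ (or semisimplicity of the rotation subgroup) explicitly before claiming irreducibility.
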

\begin{proof}
Suppose $D\cong D_{2k}$ is contained in the intersection $H_1'\cap H_2'$.
Let $\tilde{e}:=\Gcd{e_1,e_2}$.
There exist $K_1$ and $K_2$, both isomorphic to $\PSL{p^{\tilde{e}}}$ such that $K_1\leq H_1'$ and $K_2\leq H_2'$.
It can be shown, using the Euclidian algorithm, that $\Gcd{p^{e_1}\pm1,p^{e_2}\pm1}\in\set{p^{\Gcd{e_1,e_2}}+1,p^{\Gcd{e_1,e_2}}-1}$.
Hence $D$ is a subgroup of $K_1$ and $K_2$.
Taking $f=\tilde{e}$, we have $f\mid e_1$, $f\mid e_2$, $f\mid d$ and $k\mid \frac{p^f\pm1}{2}$.
By \cite[Lemma 5.7]{JL2013}, there exists a unique $\PSL{p^f}$ such that
\begin{align*}
D&\leq\PSL{p^f}\leq \PSL{p^{e_1}},\\
D&\leq\PSL{p^f}\leq \PSL{p^{e_2}}\text{ and}\\
D&\leq\PSL{p^f}\leq\PSL{p^d}.
\end{align*}
Hence $K_1=K_2$ so that $\PSL{p^{\tilde{e}}}\cong K_1=K_2\leq H_1\cap H_2$.
If $H_1\cong\PSL{p^{e_1}}$ and $H_2\cong\PSL{p^{e_2}}$, then clearly $H_1\cap H_2\cong\PSL{p^{\tilde{e}}}$.
If $H_1\cong\PGL{p^{e_1}}$ and $H_2\cong\PGL{p^{e_2}}$, then $\PSL{p^{\tilde{e}}}\leq\PGL{p^{\tilde{e}}}\leq H_1\cap H_2$ so that $H_1\cap H_2\cong\PGL{p^{\tilde{e}}}$.
If $H_1\cong\PSL{p^{e_1}}$ and $H_2\cong\PGL{p^{e_2}}$ and $e_1/\tilde{e}$ is even then $\PGL{p^{\tilde{e}}}\leq\PSL{p^{e_1}}\cap\PGL{p^{e_2}}$ so $H_1\cong H_2\cong\PGL{p^{\tilde{e}}}$.
If $e_1/\tilde{e}$ is odd then $\PGL{p^{\tilde{e}}}$ is not a subgroup of $\PSL{p^{e_1}}$ so that $H_1\cap H_2\cong\PSL{p^{\tilde{e}}}$.
\end{proof}

\section{Rank $\geq5$}\label{rank5}

In this section, we prove part (1) of Theorem~\ref{maintheo}, that is, that neither $\PSL{q}$ nor $\PGL{q}$ are automorphism groups of chiral polytopes of rank at least five.
\begin{thm}\label{thm:rank5}
  If $q>3$ is a prime power, neither $\PSL{q}$ nor $\PGL{q}$ is the automorphism group of a chiral polytope of rank $\geq5$.
\end{thm}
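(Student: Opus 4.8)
The plan is to rule out chiral polytopes of rank $\geq 5$ with automorphism group $G\cong\PSL{q}$ or $\PGL{q}$ by exploiting the section structure of such a polytope together with the constraints imposed by Lemma~\ref{lem:rank3facesAreDirReg} and Theorem~\ref{RegRank4}. By the standard ``residue'' argument for abstract polytopes, every section of a chiral polytope that is of rank $3$ or $4$ must itself be either chiral or directly regular, and the subgroups of $G$ generated by the corresponding consecutive rotations $\sigma_i$ must realise these sections. First I would reduce to the rank~$5$ case: if a chiral polytope of rank $n\geq 5$ existed, then taking suitable faces and cofaces (which by Lemma~\ref{lem:rank3-faces_are_directly_regular} are directly regular or chiral polytopes of smaller rank) would produce a rank~$5$ chiral polytope, so it suffices to derive a contradiction when $n=5$.

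Next I would analyse the rank~$5$ case directly. Here $G=\gen{\sigma_1,\sigma_2,\sigma_3,\sigma_4}$ with relations \eqref{C1}, \eqref{C2} and the intersection property \eqref{C3}. The key structural observation is that $G$ must contain two rank~$4$ sections: the face $\gen{\sigma_1,\sigma_2,\sigma_3}$ (the vertex figure / facet, of rank $4$) and the coface $\gen{\sigma_2,\sigma_3,\sigma_4}$, each of which is either a chiral or a directly regular polytope of rank~$4$. By Lemma~\ref{lem:rank3-faces_are_directly_regular}, since these are the $(\rk-2)$-faces and cofaces of edges, they are forced to be \emph{directly regular}. The subgroups generating them are proper subgroups of $G$; since $G$ is $\PSL{q}$ or $\PGL{q}$, these proper subgroups lie in maximal subgroups from the list in Proposition~\ref{prop:max_sub}. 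I would then invoke Theorem~\ref{RegRank4}: the automorphism group of a rank~$4$ directly regular polytope with socle $\PSL{q}$ is extremely restricted (essentially $\PGL{5}$, $\PSL{11}$, $\PSL{19}$, or the $\Sigma(2,p^{2d})$ groups), and in particular the rotation subgroup realising a directly regular rank~$4$ section, being generated by three rotations satisfying the relevant relations, cannot fit inside the dihedral, $E_q\rtimes C_{\bullet}$, or small sporadic-type maximal subgroups of $G$. The strategy is to show that the two overlapping rank~$4$ sections, together with the intersection condition $\gen{\sigma_1,\sigma_2,\sigma_3}\cap\gen{\sigma_2,\sigma_3,\sigma_4}=\gen{\sigma_2,\sigma_3}$, cannot simultaneously be housed in $G$ without the combined group $\gen{\sigma_1,\sigma_2,\sigma_3,\sigma_4}$ collapsing to a proper subgroup, contradicting generation of all of $G$.

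The main obstacle I anticipate is a careful case analysis of which subfield and dihedral subgroups of $\PSL{q}$ (respectively $\PGL{q}$) can contain rotation subgroups of directly regular rank~$4$ polytopes, and showing these are mutually incompatible with the intersection property. Proposition~\ref{prop:interSubf} will be central here: it controls the intersection $H_1\cap H_2$ of two subfield subgroups containing a common dihedral piece, and I would use it to pin down $\gen{\sigma_2,\sigma_3}$ (which must be the shared section of rank~$3$) and then argue that the ambient group generated by all four $\sigma_i$ is forced to be a proper subfield subgroup rather than the whole of $G$. The delicate point is handling the groups $\Sigma(2,p^{2d})$ appearing in Theorem~\ref{RegRank4}, since these are genuinely the automorphism groups of directly regular rank~$4$ polytopes with the right socle; I would need to verify that whenever such a section occurs, the ambient almost simple group is strictly larger than $\PSL{q}$ or $\PGL{q}$, so that these groups themselves never arise as the full automorphism group of a chiral polytope of rank~$\geq 5$.
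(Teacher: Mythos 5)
There is a genuine gap at the heart of your argument: you claim that the two rank-$4$ sections $\gen{\sigma_1,\sigma_2,\sigma_3}$ (the facet) and $\gen{\sigma_2,\sigma_3,\sigma_4}$ (the vertex figure) are ``the $(\rk-2)$-faces and cofaces of edges'' and are therefore forced by Lemma~\ref{lem:rank3facesAreDirReg} to be directly regular. For a rank-$5$ polytope the $(\rk-2)$-faces are the $3$-faces and the cofaces of edges are the sections above a rank-$1$ face; both have rank $3$, not rank $4$. The facets and vertex figures of a rank-$5$ chiral polytope are rank-$4$ sections and are \emph{not} covered by that lemma: they may perfectly well be chiral. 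Since your entire strategy is to feed these rank-$4$ sections into Theorem~\ref{RegRank4} (which concerns \emph{regular} rank-$4$ polytopes), the argument collapses at this point --- to handle possibly chiral rank-$4$ facets you would need a classification of chiral rank-$4$ polytopes realised inside the subgroups of $\PSL{q}$, which is not available at this stage and is in fact the subject of the later sections of the paper. A further, smaller, issue is that Theorem~\ref{RegRank4} as stated applies to almost simple groups with socle $\PSL{q}$, whereas your sections live in proper subgroups of $G$; some translation via Proposition~\ref{prop:max_sub} would be needed even if the sections were regular.

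The objects that \emph{are} forced to be directly regular are the rank-$3$ sections, and this is what the paper exploits: the full automorphism groups of the $3$-faces and of the cofaces of edges are $\gen{\sigma_1,\sigma_2,\sigma_3\sigma_4}$ and $\gen{\sigma_1\sigma_2,\sigma_3,\sigma_4}$ (the extra involution $\sigma_3\sigma_4$, resp.\ $\sigma_1\sigma_2$, supplying the reflection), and these must be subgroups of $\PSL{q}$ or $\PGL{q}$ that are automorphism groups of directly regular polyhedra. Running this against Proposition~\ref{prop:max_sub} leaves only $S_4$ and subfield subgroups $\PGL{p^e}$ as candidates, and the contradiction is then extracted from the subfield-intersection analysis of Proposition~\ref{prop:interSubf} together with the intersection property, much as you anticipate in your third paragraph --- but applied to these rank-$3$ data, not to the rank-$4$ facets. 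If you replace your rank-$4$ sections by the correct rank-$3$ ones and drop the appeal to Theorem~\ref{RegRank4} for the rank-$5$ case (it is only needed for the reduction from rank $>5$), your outline can be repaired along the lines of the paper's proof.
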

\begin{proof}
  Suppose $\Autgp\cong\PSL{q}$ or $\Autgp\cong\PGL{q}$ is the automorphism group of a chiral polytope $\mathcal{P}$ of rank $5$ with distinguished generators $\set{\s{1},\s{2},\s{3},\s{4}}$.  
  By Lemma \ref{lem:rank3facesAreDirReg}, $\gen{\s{1}\s{2},\s{3},\s{4}}$ and $\gen{\s{1},\s{2},\s{3}\s{4}}$ are two distinct subgroups of $\PGL{q}$, each of them being the automorphism group of a directly regular polyhedron.
  According to the classification of the maximal subgroups of $\PGL{q}$, $\gen{\s{1},\s{2},\s{3}\s{4}}$ and $\gen{\s{1}\s{2},\s{3},\s{4}}$ must be isomorphic to one of $\Sym{4}$
  or $\PGL{p^e}$ for $d=er$ where $r>1$.
The other subgroups cannot be automorphism groups of directly regular polyhedra.
The only directly regular polyhedron for $\Sym{4}$ has type $\set{3,3}$.
This already implies that both $\gen{\s{1},\s{2},\s{3}\s{4}}$ and $\gen{\s{1}\s{2},\s{3},\s{4}}$ cannot be isomorphic to $S_4$ for otherwise, we obtain the 4-simplex which is not a chiral polytope.
Furthermore, if $q=2^e$, then $\PGL{2^{e}}=\PSL{2^e}$ so the $3$-faces can only have $S_4$ as automorphism group, which is impossible.
Hence $\PSL{2^d}$ is not the automorphism group of a chiral $5$-polytope.
So we may assume $p$ odd.

 We start by considering the case where
 \begin{align}
 \Autgp\cong\PSL{p^d}&&\text{and}&&\gen{\s{1},\s{2},\s{3}\s{4}}\cong\PGL{p^{e}},\label{eq:PGLe}
 \end{align}
 where $e$ is a divisor of $d$ (see Figure \ref{fig:1} for a related partial diagram of the subgroup structure of $\PSL{p^d}$).
 First note that, by Lemma \ref{lem:PGLinPSL}, $d/e$ is even and hence $d$ is even.
 The subgroup $\gen{\s{1},\s{2}}$, being the rotation subgroup of $\gen{\s{1},\s{2},\s{3}\s{4}}$ must be isomorphic to $\PSL{p^{e}}$.
 Consider the subgroup $\gen{\s{1},\s{2},\s{3}}$.
 It must be a subfield subgroup of $\Autgp$ containing $\PSL{p^{e}}$.
 If $\gen{\s{1},\s{2},\s{3}}\cong\PGL{p^{f}}$ for some multiple $f$ of $e$, then, by the intersection property (C3), $\PGL{p^e}\cap\PGL{p^f}\cong\gen{\s{1},\s{2},\s{3}\s{4}}\cap\gen{\s{1},\s{2},\s{3}}=\gen{\s{1},\s{2}}\cong\PSL{p^{e}}$  (for, otherwise, $\langle \alpha,\beta,\gamma,\delta\rangle \leq \langle\alpha,\beta,\gamma\rangle$ and therefore $\Gamma = \langle\alpha,\beta,\gamma\rangle$). But this contradicts Proposition \ref{prop:interSubf}.
 Hence
 \begin{align}
 \gen{\s{1},\s{2},\s{3}}\cong\PSL{p^{f}}.\label{eq:PGLf}
 \end{align}
 The quotient $f/e$ must be odd, for otherwise, by Proposition \ref{prop:interSubf}, $\PSL{p^{f}}\cap\PGL{p^{e}}\cong\PGL{p^{e}}$.
 Therefore, $(d/e)/(f/e)=d/f$ is even and $\PGL{p^f}$ is a subgroup of $\PSL{p^d}$.
 By Lemma \ref{lem:PSLinPSL}, there exists $H\cong\PGL{p^g}$ such that the intersection $\PGL{p^{f}}\cap\PGL{p^{e}}$ contains $\PSL{p^{e}}$ so,
 \begin{align}
 \PGL{p^{f}}\cap\PGL{p^{e}}\cong\PGL{p^{e}}\label{eq:PGLpe}
 \end{align}
 which means that $\PGL{p^{e}}$ is a subgroup of $\PGL{p^{f}}$.
 Hence, putting \eqref{eq:PGLe}, \eqref{eq:PGLf} and \eqref{eq:PGLpe} together, we have
 \begin{align*}
 \PSL{p^d}=\gen{\s{1},\s{2},\s{3},\s{4}}=\gen{\PSL{p^{f}},\PGL{p^{e}}}\leq\PGL{p^{f}},
 \end{align*}
 which is impossible since $f<d$.
 The symmetric case where $\gen{\s{1}\s{2},\s{3},\s{4}}\cong\PGL{p^e}$ is similar to deal with.
 This already shows that $\PSL{q}$ is not the automorphism group of a chiral $5$-polytope.
 
 \begin{figure}
  \begin{tikzpicture}

  \pgfmathsetmacro{\bx}{5}
  \pgfmathsetmacro{\by}{-1.5}
  \pgfmathsetmacro{\cx}{10}
  \pgfmathsetmacro{\cy}{-1.5}
  \pgfmathsetmacro{\dx}{0}
  \pgfmathsetmacro{\dy}{0}
  \pgfmathsetmacro{\r}{1.5}
  \pgfmathsetmacro{\debut}{180}
  
  \node[draw=black!40] (a1) at (0,0) {$\PSL{p^d}=\gen{\s{1},\s{2},\s{3},\s{4}}$};
  \node[draw=black!40] (a2) at (-3.5,-3) {$\PGL{p^e}=\gen{\s{1},\s{2},\s{3}\s{4}}$};
  \node[draw=black!40] (a3) at (0,-4.5) {$\PSL{p^e}=\gen{\s{1},\s{2}}$};
  
  \node[draw=black!40] (a4) at (3.5,-3) {$\PSL{p^f}=\gen{\s{1},\s{2},\s{3}}$};
  \node[draw=black!40] (a5) at (0,-1.5) {$\PGL{p^f}$};
  
  \path[draw=black!40] (a1) edge [left] node {$d/e$ even} (a2);
  \path[draw=black!40] (a2) edge (a3);
  \path[draw=black!40] (a3) edge [below right] node {$f/e$ odd} (a4);
  \path[draw=black!40] (a4) edge (a5);
  \path[draw=black!40] (a1) edge [draw=black!40] node [right] {$d/f$ even} (a4);
    \path[draw=black!40] (a5) edge (a1);
    \path[draw=black!40] (a2) edge (a5);

 \end{tikzpicture}
 \caption{A partial diagram of the subgroup structure of $\PSL{p^d}$}
 \label{fig:1}
 \end{figure}
 
 Now assume that $\Autgp\cong\PGL{p^d}$ and $\gen{\s{1},\s{2},\s{3}\s{4}}\cong\PGL{p^{e_1}}$ (see Figure \ref{fig:2} for a related diagram).
 Then we have two cases: either $\gen{\s{1}\s{2},\s{3},\s{4}}$ is isomorphic to $\PGL{p^{e_2}}$ or to $S_4$.
 First suppose $\gen{\s{1}\s{2},\s{3},\s{4}}\cong\PGL{p^{e_2}}$.
 As before, $\gen{\s{1},\s{2}}$ and $\gen{\s{3},\s{4}}$ are the rotation subgroups of $\PGL{p^{e_1}}$ and $\PGL{p^{e_2}}$ respectively, so
 \begin{align*}
 \gen{\s{1},\s{2}}&\cong\PSL{p^{e_1}}\text{ and}\\
 \gen{\s{3},\s{4}}&\cong\PSL{p^{e_2}}.
 \end{align*}
 Each of these subgroups is contained in a subgroup isomorphic to $\PSL{p^d}$.
 But $\PSL{p^d}$ is unique in $\Autgp$ so
 \begin{align*}
 \PGL{p^d}=\gen{\s{1},\s{2},\s{3},\s{4}}=\gen{\PSL{p^{e_1}},\PSL{p^{e_2}}}\leq\PSL{p^d},
 \end{align*}
 a contradiction.
 Finally suppose $\Autgp\cong\PGL{p^d}$, $\gen{\s{1}\s{2},\s{3},\s{4}}\cong\PGL{p^{e_1}}$ and $\gen{\s{1}\s{2},\s{3},\s{4}}\cong S_4$.
 The subgroup $\gen{\s{3},\s{4}}$ is isomorphic to $A_4$.
 Since $\s{3}$ and $\s{4}$ have order $\order{\s{3}}=\order{\s{4}}=3$, by Lemma \ref{lem:order3}, $\gen{\s{3},\s{4}}$ is a subgroup of $\PSL{p^d}$.
 Hence
 \begin{align*}
 \PGL{p^d}=\gen{\s{1},\s{2},\s{3},\s{4}}=\gen{\PSL{p^{e_1}},A_4}\leq\PSL{p^d},
 \end{align*}
 which leads to the same contradiction as before.
 
 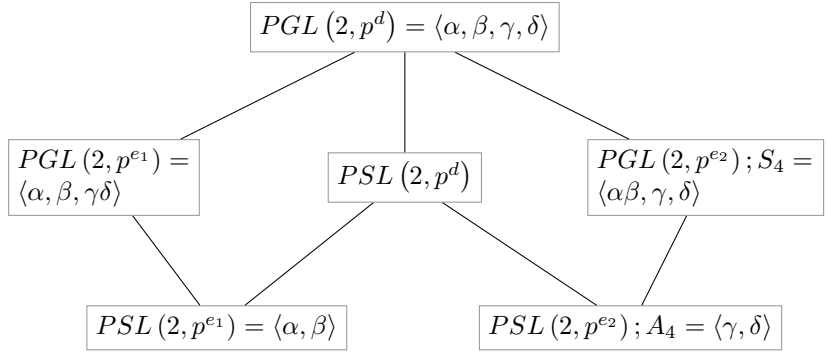
\begin{figure}
  \begin{tikzpicture}

  \pgfmathsetmacro{\bx}{5}
  \pgfmathsetmacro{\by}{-1.5}
  \pgfmathsetmacro{\cx}{10}
  \pgfmathsetmacro{\cy}{-1.5}
  \pgfmathsetmacro{\dx}{0}
  \pgfmathsetmacro{\dy}{0}
  \pgfmathsetmacro{\r}{1.5}
  \pgfmathsetmacro{\debut}{180}
  
  \node[draw=black!40] (a1) at (0,-2) {$\PGL{p^d}=\gen{\s{1},\s{2},\s{3},\s{4}}$};
  \node[draw=black!40,text width=2.3cm] (a2) at (-4,-4) {$\PGL{p^{e_1}}=\gen{\s{1},\s{2},\s{3}\s{4}}$};
  \node[draw=black!40] (a3) at (-2.5,-6) {$\PSL{p^{e_1}}=\gen{\s{1},\s{2}}$};
  
  \node[draw=black!40,text width=2.9cm] (a4) at (4,-4) {$\PGL{p^{e_2}};S_4=\gen{\s{1}\s{2},\s{3},\s{4}}$};
  \node[draw=black!40] (a5) at (3,-6) {$\PSL{p^{e_2}};A_4=\gen{\s{3},\s{4}}$};
  
  \node[draw=black!40] (a6) at (0,-4) {$\PSL{p^d}$};
  
  \path[draw=black!40] (a1) edge (a2);
  \path[draw=black!40] (a2) edge (a3);
  \path[draw=black!40] (a3) edge (a6);
  \path[draw=black!40] (a1) edge (a4);
    \path[draw=black!40] (a4) edge (a5);
    \path[draw=black!40] (a5) edge (a6);
        \path[draw=black!40] (a6) edge (a1);

 \end{tikzpicture}
 \caption{A partial diagram of the subgroup structure of $\PGL{p^d}$.}
 \label{fig:2}
 \end{figure}
 
 The case where $\Autgp\cong\PGL{p^d}$ and $\gen{\s{1}\s{2},\s{3},\s{4}}\cong\PGL{p^e}$ is similar to deal with and can therefore be ruled out.
Hence $\Autgp\in\set{\PSL{q},\PGL{q}}$ cannot be the automorphism group of a rank-$5$ chiral polytope.

  It remains to investigate the rank $>5$.
  The $n-1$ faces of a chiral polytope of rank $n>5$ with automorphism group isomorphic to a projective linear group are directly regular or chiral polytopes of rank $n-1>4$ with automorphism group isomorphic to a projective linear group.
  So by Theorem \ref{RegRank4}, there is no chiral polytope of rank $n>4$ for $\PSL{q}$ and $\PGL{q}$.
  \end{proof}
  
  This proves part \eqref{main:rg5} of Theorem \ref{maintheo}.

\section{Rank $4$ and $\PGL{q}$}\label{sec:PGL4}

In this section we prove parts \eqref{main:PGL} of Theorem~\ref{maintheo} by constructing, for each prime power $q$, chiral polytopes of rank 4 whose automorphism group is $\PGL{q}$.
We denote the generators of the group $\alpha$, $\beta$ and $\gamma$.
Throughout this paper, we use the following abuse of language.
If each of the parabolic subgroups $\langle \alpha, \beta\rangle$ and $\langle \beta, \gamma \rangle$ of $\gen{\s{1},\s{2},\s{3}}$ fixes at least one point, we say that the polytope has {\em non fixed point free parabolic subgroups}. Otherwise, we say that the polytope has {\em fixed point free parabolic subgroups}.
The goal of this section is to classify the chiral $4$-polytopes with non fixed point free parabolic subgroups and of type $\PGL{q}$, that is whose automorphism group is isomorphic to $\PGL{q}$.

We start with a small and easy technical lemma that will be useful for the classification.
\begin{lem}\label{lem:order}
Let $F$ be a finite field and $g\in F$ be an element of multiplicative order $|g| =: m$.
Then the multiplicative order of $-g$ is
\begin{align*}
\order{-g}=\begin{cases}2m&\text{if }m\equiv1\pmod{2}\\
m&\text{if }m\equiv0\pmod{4}\\
m/2&\text{if }m\equiv2\pmod{4}.
\end{cases}
\end{align*}
\end{lem}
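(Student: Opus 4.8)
The plan is to reduce everything to the single identity $(-g)^k=(-1)^k g^k$ and read off the minimal exponent. First I would dispose of the characteristic-$2$ case, where $-1=1$ forces $-g=g$; there the statement is only meaningful (and correct) under the implicit hypothesis that $F$ has odd characteristic, which I assume henceforth. Then $F^\times$ is cyclic of even order, $-1$ is its unique element of order $2$, and in particular $-1\in\gen{g}$ if and only if the cyclic group $\gen{g}$ of order $m$ contains an involution, i.e.\ if and only if $m$ is even; in that case $-1=g^{m/2}$.

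Next I would determine $\order{-g}$ by finding the least $k\geq1$ with $(-1)^k g^k=1$, splitting on the parity of $k$. If $k$ is even then $(-1)^k=1$ and the condition becomes $g^k=1$, i.e.\ $m\mid k$; the least such (even) $k$ is $m$ when $m$ is even and $2m$ when $m$ is odd. If $k$ is odd then $(-1)^k=-1$ and the condition becomes $g^k=-1$, which by the previous paragraph is solvable only when $m$ is even, in which case it is equivalent to $k\equiv m/2\pmod{m}$.

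The proof then finishes by a short parity bookkeeping according to $m\bmod 4$. When $m$ is odd, no odd $k$ can ever satisfy the required equation, so the order is the least even multiple of $m$, namely $2m$. When $m$ is even, I compare the two competing candidates: the best even exponent is $k=m$, and the best odd exponent (if one exists) is the smallest odd solution of $k\equiv m/2\pmod{m}$. If $m\equiv 0\pmod{4}$ then $m/2$ is even and every term of the progression $m/2,\,3m/2,\,\dots$ is even, so no odd exponent occurs and the order is $m$. If $m\equiv 2\pmod{4}$ then $m/2$ is odd, so $k=m/2$ is an admissible odd exponent, and since $m/2<m$ it beats the even candidate, giving order $m/2$.

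I expect the only point genuinely requiring care to be this last parity bookkeeping in the even-$m$ case — precisely, recognising that whether the congruence $k\equiv m/2\pmod{m}$ admits an odd solution below $m$ is governed entirely by $m\bmod 4$. Everything else reduces to the elementary identity $(-g)^k=(-1)^k g^k$ together with the cyclicity of $F^\times$ and the fact that $-1$ is its unique involution.
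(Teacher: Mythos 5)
Your proof is correct and follows essentially the same route as the paper's: both arguments reduce to $(-g)^k=(-1)^kg^k$, handle the odd-$m$ case via the fact that $-1$ has order $2$ coprime to $m$, and settle the even-$m$ case by testing the exponent $m/2$, whose parity is exactly what $m\bmod 4$ controls. Your explicit remark that the statement implicitly assumes odd characteristic (the paper only ever invokes the lemma for odd $q$, treating even $q$ separately) is a small but worthwhile clarification not present in the paper's one-line proof.
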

\begin{proof}
If $m\equiv1\pmod{2}$, then $\Gcd{2,m}=1$ so $\order{-g}=\order{-1}\order{g}=2m$.
If $m\equiv0\pmod{2}$, then $\order{g}/2$ divides $\order{-g}$ so $\order{-g}\in\set{m/2,m}$.
If $m\equiv0\pmod{4}$, then $(-g)^{m/2}=g^{m/2}\not=\epsilon$ so $\order{-g}=m$ whereas if $m\equiv2\pmod{4}$, then $(-g)^{m/2}=-g^{m/2}=\epsilon$ so $\order{-g}=m/2$.
\end{proof}

The following theorem classifies the chiral polytopes with non fixed point free parabolic subgroups and automorphism group isomorphic to $\PGL{q}$.

%

\begin{thm} \label{thm:PGL}
Let $q:=p^d>4$ be a prime power.
The only chiral $4$-polytopes with non fixed point free parabolic subgroups and automorphism group $\PGL{q}$ are as follows: for each  integer $k>2$ such that $k\mid q-1$, $k\nmid (q-1)/2$ and $k\nmid p^e\pm1$ for all $e<d$,
\begin{enumerate}
\item if $q\not\equiv3\pmod{4}$, there are $\eul{k}/d$ chiral $4$-polytopes of type $[k,k,k]$.
\item if $q\equiv3\pmod{4}$, there are $\eul{k}/d$ chiral $4$-polytopes of type $[k/2,k,k/2]$.
\end{enumerate}
\end{thm}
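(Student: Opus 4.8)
The plan is to recover the whole triple $(\alpha,\beta,\gamma)$ from the shared generator $\beta$, working inside the point stabilisers $\E{q}\rtimes\C{q-1}$ of $\PGL{q}$ on $\PG{q}$. Since the parabolic subgroups are not fixed point free, $\gen{\alpha,\beta}$ fixes a point $P_1$ and $\gen{\beta,\gamma}$ fixes a point $P_2$; were $P_1=P_2$, the whole group $\gen{\alpha,\beta,\gamma}=\PGL{q}$ would fix a point, which is impossible, so $P_1\neq P_2$ and $\beta$ fixes the two distinct points $P_1,P_2$. After conjugating I may take $P_1=\infty$ and $P_2=0$, so that $\beta$ is the map $x\mapsto\lambda x$ with $k:=\order{\beta}=\order{\lambda}$ dividing $q-1$; by Lemma~\ref{lem:rank3facesAreDirReg} the facet $\gen{\alpha,\beta}$ and the vertex-figure $\gen{\beta,\gamma}$ are directly regular polyhedra lying in the solvable stabilisers of $\infty$ and $0$. (When $q$ is even $\PSL{q}=\PGL{q}$ and characteristic two removes every sign below, so I focus on odd $q$.)

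From $(\alpha\beta)^2=1$ the involution $\alpha\beta$ fixes $\infty$, so it has the affine form $x\mapsto -x+b$ with linear part $-1$; hence $\alpha=(\alpha\beta)\beta^{-1}$ has linear part $-\lambda^{-1}$ and $\order{\alpha}=\order{-\lambda}$, which Lemma~\ref{lem:order} reads off from $k\bmod4$, and symmetrically $\order{\gamma}=\order{-\lambda}$. The crucial step is the intersection property~\eqref{C3'}. Since $k\nmid p^e-1$ for $e<d$ forces $\lambda$ to generate $\GF{q}$ over its prime field, the translation subgroup $\E{q}$ lies in $\gen{\alpha,\beta}$ (here $b\neq0$, else the configuration is degenerate), and composing $\alpha\beta$ with a translation yields the scaling $x\mapsto -x$; the same holds in $\gen{\beta,\gamma}$. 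As $\gen{\alpha,\beta}$ and $\gen{\beta,\gamma}$ sit in the stabilisers of $\infty$ and $0$, their intersection lies in the diagonal torus and contains both $\beta$ and $x\mapsto -x$, so it equals $\gen{\lambda,-1}$; property~\eqref{C3'} forces $\gen{\lambda,-1}=\gen{\beta}=\gen{\lambda}$, i.e. $-1\in\gen{\lambda}$, i.e. $k$ is even. For $q\equiv3\pmod4$ one has $q-1\equiv2\pmod4$, so $k$ even means $k\equiv2\pmod4$ and Lemma~\ref{lem:order} gives $\order{\alpha}=\order{\gamma}=k/2$, the type $[k/2,k,k/2]$; for $q\equiv1\pmod4$ the sharper $k\nmid(q-1)/2$ gives $4\mid k$, hence $\order{\alpha}=\order{\gamma}=k$ and the type $[k,k,k]$. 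The condition $k\nmid(q-1)/2$ thus means ``$k$ even'' for $q\equiv3$ and, via the determinants $\Det{\beta}=\lambda$, $\Det{\alpha}=\Det{\gamma}=-\lambda^{-1}$ (all squares iff $k\mid(q-1)/2$), exactly ``$\gen{\alpha,\beta,\gamma}\not\le\PSL{q}$'' for $q\equiv1$.

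I would then establish generation, $\gen{\alpha,\beta,\gamma}=\PGL{q}$, by excluding each maximal subgroup of Proposition~\ref{prop:max_sub}: the group fixes no point as $P_1\neq P_2$; it avoids $\D{2(q-1)}$ because $b\neq0$ stops $\alpha$ from stabilising $\{0,\infty\}$; it avoids $\D{2(q+1)}$ and $\Sym{4}$ since $\beta$ has order $k>2$ and fixes two points; it avoids $\PSL{q}$ by the determinant computation; and it lies in no subfield subgroup $\PGL{p^e}$, since an element of order $k$ there would force $k\mid p^e\pm1$, excluded by hypothesis. Conversely, given an admissible $\lambda$, I would build $\alpha$ and $\beta$ as above and solve $(\alpha\beta\gamma)^2=1$ for the one remaining affine parameter of $\gamma$, then verify \eqref{C1}, \eqref{C2} and \eqref{C3'} directly.

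Finally come chirality and the count, which I expect to be the main obstacle. For chirality one must exclude an involutory automorphism $\tau$ of $\PGL{q}$ with $\tau(\alpha)=\alpha^{-1}$, $\tau(\beta)=\alpha^2\beta$, $\tau(\gamma)=\gamma$. Writing $\tau\in\PGaL{q}$ as a field automorphism followed by a conjugation and comparing the conjugacy data of $\alpha,\beta,\gamma$ (note $\alpha^2\beta$ has multiplier $\lambda^{-1}$) reduces the question to whether $p^j\equiv-1\pmod k$ for some $j$, together with a residual ``inner'' case $p^j\equiv1$; the hypotheses $k>2$ and $k\nmid p^e+1$ ($e<d$) kill the former, and the latter is eliminated by a direct computation (alternatively, one checks via Theorem~\ref{RegRank4} that no directly regular rank-$4$ polytope has $\PGL{q}$ as its rotation subgroup for the $q$ in question). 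For the count, the isomorphism class of $\mathcal{P}$ depends only on the $\gen{\frob}$-orbit of $\lambda$ among the $\eul{k}$ primitive $k$-th roots of unity: because $k\nmid p^e\pm1$ for $e<d$ makes the multiplicative order of $p$ modulo $k$ equal to $d$, the group $\gen{\frob}$ acts freely, giving $\eul{k}/d$ orbits. The genuinely delicate points are the rigidity that pins $\gamma$ (hence $\mathcal{P}$) down to $\lambda$ alone — so that each orbit yields a single polytope and the enantiomorphic and field actions do not secretly identify $\lambda$ with $\lambda^{-1}$ — together with the careful exclusion of $\tau$.
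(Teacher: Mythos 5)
Your overall route is the paper's: normalise $\beta$ into the split torus as $x\mapsto\lambda x$ with $\order{\lambda}=k\mid q-1$, derive the affine forms of $\alpha$ and $\gamma$ from the three involution relations, read off $\order{\alpha}=\order{\gamma}=\order{-\lambda}$ via Lemma~\ref{lem:order}, get the intersection property from the structure $\gen{\alpha,\beta}\cong\E{q}\rtimes\C{k}$, prove generation by excluding the maximal subgroups of Proposition~\ref{prop:max_sub}, and count Frobenius orbits of $\lambda$ to obtain $\eul{k}/d$. All of that is sound (your derivation of ``$k$ even'' from \eqref{C3'} is a nice touch, though for $\PGL{q}$ it already follows from $k\nmid(q-1)/2$). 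However, you explicitly stop short at the two steps that actually carry the weight of the statement, and leaving them as ``delicate points'' is a genuine gap. First, the \emph{rigidity}: after the trace conditions, $\alpha$ still carries a free translation parameter $\mu$, and $\gamma$ is determined by $\mu$; unless you show that all choices of $\mu$ give isomorphic triples, the count $\eul{k}/d$ is not established. The paper disposes of this in one line: conjugation by the diagonal element $\left[\begin{smallmatrix}1&0\\0&\mu\end{smallmatrix}\right]$, which centralises $\beta$ and preserves $\{0,\infty\}$, normalises $\mu$ to $1$. Relatedly, your worry about $\lambda\leftrightarrow\lambda^{-1}$ is resolved by the paper's initial convention that $\alpha$ fixes $\infty$ rather than $0$ (swapping $0$ and $\infty$ conjugates $\beta$ to $\beta^{-1}$ but exchanges the roles of $\alpha$ and $\gamma$, i.e.\ produces the dual), so the count is over Frobenius orbits only.

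Second, \emph{chirality} itself is asserted rather than proved. Your reduction inside $\Aut{\PGL{q}}=\PGaL{q}$ is the right idea and the hypotheses $k>2$, $k\nmid p^e\pm1$ do eliminate $\tau=\mathrm{conj}_g\circ\frob^j$ with $j\neq0$, but the residual inner case is exactly where the content lies and you dismiss it with ``a direct computation''. (It does work: $\tau(\gamma)=\gamma$ forces $g$ to centralise $\gamma$, hence to fix the point $0$, whereas $\alpha^2\beta$ is the map $x\mapsto\lambda^{-1}x+(1-\lambda^{-1})$ with nonzero translation part, so its fixed-point set does not contain $0$ and $\beta^g=\alpha^2\beta$ is impossible --- but this needs to be said.) Your fallback via Theorem~\ref{RegRank4} does not close this case either: if the regularity automorphism were inner, the full automorphism group of the resulting regular polytope would be $\PGL{q}\times\C{2}$, which is not an almost simple group with socle $\PSL{q}$ and so is not covered by that theorem. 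The paper takes a different and shorter tack here: it shows the \emph{facet} $\gen{\alpha,\beta}\leq\AGL{q}$ cannot be the rotation group of a directly regular polyhedron (this would force an elementary abelian subgroup of order $4$ in $\AGL{q}$, hence $q$ even and $k=2$), and a rank-$4$ polytope with a chiral facet is chiral.
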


\begin{proof}
  If $\gen{\s{1},\s{2},\s{3}}$ is the automorphism group of a chiral $4$-polytope of type $\PGL{q}$ and with non fixed point free parabolic subgroups, then $\s{2}$ fixes at least one point.
  If $\s{2}$ fixes only one point, then $\s{1}$ and $\s{3}$ fix the same point as $\s{2}$ and $\gen{\s{1},\s{2},\s{3}}$ is a subgroup of the stabiliser of a point.
  So $\s{2}$ fixes two points and thus the order of $\s{2}$ divides $q-1$.
  By $2$-transitivity of $\PGL{q}$ on the projective line, we may assume that $\s{2}$ fixes $\infty$ and $0$.
  Let $\prim$ be a primitive element of $\GF{q}$.
  Then $\s{2}$ is conjugate to 
  \begin{align*}
    \s{2}':=\left[\begin{matrix}\prim^l&0\\0&1\end{matrix}\right],
  \end{align*}
  for some positive integer $l\leq q-1$.
  So we may assume $\s{2}=\s{2}'$.
  Let $k$ be the order of $j^l$.
  We now show that, for each $\s{2}$, there exist, up to duality, a unique $\s{1}$ and a unique $\s{3}$ such that $\gen{\s{1},\s{2},\s{3}}\cong\PGL{q}$ is the automorphism group, with non fixed point free parabolic subgroups, of a chiral polytope and where $\order{\s{1}}=\order{\s{3}}=k$ or $k/2$ according as to whether $q\not\equiv3\pmod{4}$ or $q\equiv3\pmod{4}$.
  We may assume that $\s{1}$ fixes $\infty$, as if $\s{1}$ fixes $0$, then we obtain the dual polytope.
  So $\s{1}$ is of the form
  \begin{align*}
    \s{1}:=\left[\begin{matrix}\scal{1}&\scal{2}\\0&1\end{matrix}\right],
  \end{align*}
  where $\scal{1}\not=0$ and $\scal{2}$ are scalars of $\GF{q}$.
  To satisfy condition (C2), $\s{1}\s{2}$ must be an involution, so we have $\Tr{\s{1}\s{2}}=j^l\scal{1}+1=0$.
  Equivalently, $\scal{1}=-j^{-l}$.
  Now if $\s{3}$ is an element of $\PGL{q}$ fixing $0$, then it has the form
  \begin{align*}
    \s{3}:=\left[\begin{matrix}1&0\\\scal{3}&\scal{4}\end{matrix}\right],
  \end{align*}
  where $\scal{3}$ and $\scal{4}\not=0$ are scalars of $\GF{q}$.
  Again, $\s{2}\s{3}$ must be an involution, so we have $\Tr{\s{2}\s{3}}=j^l+\scal{4}=0$.
  Equivalently $\scal{4}=-j^{l}$.
  Finally, $\s{1}\s{2}\s{3}$ must be an involution, so $\Tr{\s{1}\s{2}\s{3}}=-1+\scal{2}\scal{3}-j^l=0$.
  Hence $\scal{3}=\scal{2}^{-1}(1+j^{l})$ and for each $\scal{2}\in\GF{q}$, we obtain the triple
  \begin{align}\label{eq:triple}
  \s{1}=\left[\begin{matrix}
  -j^{-l}&\scal{2}\\0&1
  \end{matrix}\right],&&
    \s{2}=\left[\begin{matrix}
  j^l&0\\0&1
  \end{matrix}\right],&&
    \s{3}=\left[\begin{matrix}
  1&0\\\scal{2}^{-1}(1+j^l)&-j^l
  \end{matrix}\right].
  \end{align}
  Observe that, for a fixed $\scal{2}$, the element $\tau$ defined by
  \begin{align*}
  \tau:=\left[
 \begin{matrix}
 1&0\\0&\scal{2}
 \end{matrix}\right]
  \end{align*}
  defines an isomorphism between the triple in \eqref{eq:triple} and the following triple: 
  \begin{align}\label{eq:triple2}
  \s{1}=\left[\begin{matrix}
  -j^{-l}&1\\0&1
  \end{matrix}\right],&&
    \s{2}=\left[\begin{matrix}
  j^l&0\\0&1
  \end{matrix}\right],&&
    \s{3}=\left[\begin{matrix}
  1&0\\1+j^l&-j^l
  \end{matrix}\right],
  \end{align}
  Hence, for all $\scal{2}$, the triples defined in \eqref{eq:triple} are isomorphic and we may assume $\scal{2}=1$.
  In a cyclic subgroup of order $k$ of $\PGL{q}$ there are $\eul{k}$ elements of order $k$ and two elements are conjugate if and only if they belong to the same orbit under the action of the Frobenius automorphism.
  Since each of those orbits has length $d$, there are $\eul{k}/d$ non isomorphic triples $[\s{1},\s{2},\s{3}]$ defined as in \eqref{eq:triple2}, where $l\leq q-1$ such that $j^l$ has order $k$ and $j^{l'p^r}\not=j^l$ for all $r\in\set{0,\ldots,d-1}$ and for all $l'<l$ such that $j^{l'}$ has order $k$.
  
  We now show that each of those triples generates the automorphism group of a chiral polytope of type $\PGL{q}$.
  
  We start by computing the orders of $\s{1}$ and $\s{3}$.
 Define $\rho$ by
  \begin{align*}
  \rho:=\left[\begin{matrix}
  0&j^l\\1+j^l&0
  \end{matrix}\right].
  \end{align*}
  Then the conjugate of $\s{1}^{-1}$ by $\rho$ is equal to $\s{3}$ so that $\order{\s{3}}=\order{\s{1}}$.
  
  The $k$-power of $\s{1}$ is given by
  \begin{align}\label{eq:s1k}
  \s{1}^k=\left[\begin{matrix}
  (-j^{-l})^k&\sum_{i=0}^{k-1}(-j^{-l})^i\\0&1
  \end{matrix}\right]
\end{align}   
  
  If $q\equiv0\pmod{2}$, then $\order{-(j^{-l})}=\order{j^{-l}}=\order{j}=k$ so $\order{\s{1}}=\order{\s{3}}=k$.
  If $q\equiv1\pmod{2}$, then either $q\equiv1\pmod{4}$, in which case $k\equiv0\pmod{4}$, or $q\equiv3\pmod{4}$ in which case $k\equiv2\pmod{4}$.
  So, by Lemma \ref{lem:order}, if $q\equiv1\pmod{4}$, then $\order{-j^{-l}}=k$ and if $q\equiv3\pmod{4}$, then $\order{-j^{-l}}=k/2$.
  Hence $\s{1}$ has order $k$ or $k/2$ according as $q\not\equiv3\pmod{4}$ or $q\equiv3\pmod{4}$.
    

	Let $w\in\gen{\s{1},\s{2}}$.
	Then $w=\s{2}^{\kappa_1}(\s{1}\s{2})\s{2}^{\kappa_2}(\s{1}\s{2})\cdots(\s{1}\s{2})\s{2}^{\kappa_n}$.
	So for any point $z$ in the affine line, $w(z)=\epsilon j^{(\kappa_1+\cdots+\kappa_n)l}z+A$, for some scalar $A$ and $\epsilon\in\set{-1,1}$.
	The trace of $w$ is given by $\Tr{w}=\epsilon j^{(\kappa_1+\cdots+\kappa_n)l}+1$ in which case $\order{w}\leq\order{j^l}=\order{\s{2}}$.
	It follows that $\gen{\s{1},\s{2}}\cong E_q\rtimes C_{\order{\s{2}}}$.
	Similarly, $\gen{\s{2},\s{3}}\cong E_q\rtimes C_{\order{\s{2}}}$ and thus $\gen{\s{1},\s{2}}\cap\gen{\s{2},\s{3}}\cong C_{\order{\s{2}}}$ so the intersection property holds.

  
  We now show that $\s{1}$, $\s{2}$ and $\s{3}$ generate $\PGL{q}$.
  By hypothesis, $k\nmid(q-1)/2$ and $k\nmid(p^e\pm1)$ for all $e<d$, so $\gen{\s{1},\s{2},\s{3}}$ is not contained in a proper subfield subgroup.
  Moreover, $\gen{\s{1},\s{2},\s{3}}$ is fixed point free and clearly not dihedral.
  Since $k>4$, $\gen{\s{1},\s{2},\s{3}}$ is not an icosahedral subgroup, so $\gen{\s{1},\s{2},\s{3}}\cong\PGL{q}$.
  Note that if $k\mid (q-1)/2$ or if there exists $e<d$ such that $k\mid p^e\pm1$, then $\gen{\s{1},\s{2},\s{3}}$ is isomorphic to a proper subfield subgroup of $\PGL{q}$.
  
  It remains to show that $\gen{\s{1},\s{2},\s{3}}$ is the automorphism group of a chiral polytope $\poly$.
  This is straightforward since if we assume that $\langle \alpha,\beta\rangle$ is a subgroup of $\AGL{q}$ that is the automorphism group of a regular polytope of rank at least 3, then $\AGL{q}$  has elementary abelian groups of order $4$. Hence $q$ is even and $k = 2$, contradicting the fact that $k>2$.
   Hence the facets of $\poly$ are chiral and so $\poly$ is chiral.
  \end{proof}
  
  Note that $\PGL{4}\cong A_5$, $\PGL{3}\cong S_3$ and $\PGL{2}\cong C_2$ are not automorphism groups of chiral polytopes.
  This observation and the previous result yield the following corollary.
  
  \begin{cor}
  Let $q=p^d$ be a prime power.
  Then $\PGL{q}$ is the automorphism group of a chiral $4$-polytope if and only if $q>4$.
  \end{cor}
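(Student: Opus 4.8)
The plan is to prove the two implications of the equivalence separately, with essentially all of the content lying in the forward direction. For the ``only if'' direction I would argue by contraposition: if $q\leq 4$, then since $q$ is a prime power we have $q\in\{2,3,4\}$, and the three groups $\PGL{2}$, $\PGL{3}$, $\PGL{4}$ are exactly the small groups recorded in the observation immediately preceding the statement, none of which is the automorphism group of a chiral polytope. This settles ``only if'' at once, so the real work is to show that every prime power $q>4$ does arise.

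For the ``if'' direction I would invoke Theorem~\ref{thm:PGL}: it suffices to exhibit a single integer $k>2$ with $k\mid q-1$, $k\nmid(q-1)/2$, and $k\nmid p^e\pm1$ for all $1\leq e<d$, since the theorem then produces $\eul{k}/d$ chiral $4$-polytopes with automorphism group $\PGL{q}$. The key observation is that the maximal choice $k=q-1$ already works for every $q>4$. Indeed $k\mid q-1$ trivially; the inequality $0<(q-1)/2<q-1$ gives $k\nmid(q-1)/2$ when $q$ is odd (and the condition is vacuous when $q$ is even, as $\PSL{q}=\PGL{q}$). The subfield-avoidance conditions reduce to the strict inequalities $0<p^e-1<q-1$ and $0<p^e+1<q-1$ for $1\leq e\leq d-1$: the first is immediate, while the second amounts to $p^{d-1}(p-1)>2$ when $d\geq 2$ (and is vacuous when $d=1$), an inequality that fails only for $q=4$. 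Finally $k=q-1>2$ since $q\geq 5$.

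It remains to check that this choice genuinely yields at least one polytope, i.e.\ that $\eul{q-1}/d\geq 1$. Here I would reuse the orbit count already present in the proof of Theorem~\ref{thm:PGL}: the Frobenius map $x\mapsto x^p$ permutes the $\eul{q-1}$ primitive elements of $\GF{q}$ in orbits of length exactly $d$, because the order of $p$ modulo $q-1$ is $d$ (again since $p^e<q-1$ for $1\leq e<d$ forces $p^e\not\equiv 1$). Hence $d\mid\eul{q-1}$ and the count $\eul{q-1}/d$ is a positive integer.

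The main, and essentially only, obstacle is the verification that the constructed group is not contained in a proper subfield subgroup of $\PGL{q}$, which is precisely what the conditions $k\nmid(q-1)/2$ and $k\nmid p^e\pm1$ encode. The value of choosing $k=q-1$ is that it renders this delicate point trivial: since $q-1$ strictly exceeds every $p^e\pm1$ with $e<d$, no proper subfield subgroup can contain an element of order $q-1$, so there is no need to appeal to primitive prime divisors (Zsygmondy's theorem) or to handle the usual exceptional cases $q=9$ and $q=64$ by hand.
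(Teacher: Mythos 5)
Your proof is correct and follows essentially the same route as the paper: the reverse direction is handled by the observation that $\PGL{2}$, $\PGL{3}$ and $\PGL{4}$ are too small, and the forward direction by taking $k=q-1$ in Theorem~\ref{thm:PGL}. You simply spell out in more detail the verification that $k=q-1$ satisfies the hypotheses and that $\eul{q-1}/d$ is a positive integer, which the paper leaves implicit.
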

  \begin{proof}
  For each prime power $q>4$, take $k:=q-1$ which satisfies the assumptions of Theorem \ref{thm:PGL}.
  So for each $q>4$, there exists at least one chiral $4$-polytope with $\PGL{q}$ as automorphism group.
  \end{proof}
  
  This proves part \eqref{main:PGL} of Theorem \ref{maintheo}.

\section{Rank $4$ and $\PSL{q}$}\label{sec:PSL4}
As in the previous section, we denote the generators of the group $\Gamma$ by $\alpha$, $\beta$ and $\gamma$.

In \cite{JL2013} and \cite{JLM2011}, the authors proved that there are infinitely many $q$ for which $\PSL{q}$ is a smooth quotient of the $[5,3,5]^+$ or $[3,5,3]^+$ Coxeter even group.
For each characteristic $p$, they showed that $p^4$ is the maximum power for which $\PSL{p^d}$ is such a quotient.
More precisely, they showed that $p$, $p^2$ and $p^4$ are the only possible prime powers.
Hence if $q\equiv3\pmod{4}$, $\PSL{q}$ is the automorphism group of a chiral $4$-polytope of type $[3,5,3]$ or $[5,3,5]$ only when $q=p$.
Such a quotient is the automorphism group of a chiral polytope if it verifies the conditions \eqref{C1} to \eqref{C3} and is not the rotation subgroup of a regular polytope.
The purpose of this section is to show that the smooth quotients obtained in \cite{JLM2011} and \cite{JL2013} are indeed chiral $4$-polytopes.
We obtain a similar result for smooth quotients of the Coxeter even subgroup $[5,3,4]^+$ (and its dual $[4,3,5]^+$).
More remarkable, we show that, if $q=p\equiv3\pmod{4}$, each chiral polytope of rank $4$ with automorphism group isomorphic to $\PSL{q}$ is, up to duality, one of those with type $[5,3,5]$, $[3,5,3]$ or $[5,3,4]$.
As in the previous section, we also classify the chiral polytopes with non fixed point free parabolic subgroups and automorphism group isomorphic to $\PSL{q}$.

\subsection{Case $q\equiv1\pmod{4}$}

As in Section~\ref{sec:PGL4}, we say that a chiral 4-polytope with non fixed point free parabolic subgroups is a polytope whose automorphism group, generated by $\alpha, \beta$ and $\gamma$, is such that both $\langle \alpha,\beta\rangle$ and $\langle \beta,\gamma\rangle$ fix at least one point of $PG(1,q)$.
The next theorem classifies such polytopes with automorphism group $\PSL{q}$.


\begin{thm} \label{thm:PSL}
Let $q:=p^d\equiv1\pmod{4}$ be a prime power.
The only chiral $4$-polytopes with non fixed point free parabolic subgroups and automorphism group isomorphic to $\PSL{q}$ are the following: for each integer $k>2$ such that $k\mid (q-1)/2$ and $k\nmid p^e\pm1$ for all $e<d$,
\begin{enumerate}
\item if $k\equiv0\pmod{4}$, there are $\eul{k}/d$ chiral $4$-polytopes of type $[k,k,k]$.
\item if $k\equiv2\pmod{4}$, there are $\eul{k}/d$ chiral $4$-polytopes of type $[k/2,k,k/2]$.
\end{enumerate}
\end{thm}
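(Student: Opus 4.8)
The plan is to follow the proof of Theorem~\ref{thm:PGL} almost verbatim up to the construction of the generating triple, and then to extract the extra information forced by the congruence $q\equiv1\pmod4$ and by the parity of $k$, which is precisely where the $\PSL{q}$ case diverges. As before, non fixed point free parabolic subgroups force $\beta$ to fix two points of $\PG{q}$: if it fixed only one, then $\alpha$ and $\gamma$ would fix it too and $\gen{\alpha,\beta,\gamma}$ would stabilise a point. The two-point stabiliser in $\PSL{q}$ is cyclic of order $(q-1)/2$, so $\order{\beta}=k$ divides $(q-1)/2$. Using the $2$-transitivity of $\PSL{q}$ on $\PG{q}$ I would normalise $\beta$ to $\mathrm{diag}(j^l,1)$, where $j^l$ is a nonzero square (equivalently $k\mid(q-1)/2$), impose \eqref{C2} on $\alpha\beta$, $\beta\gamma$ and $\alpha\beta\gamma$ exactly as in Theorem~\ref{thm:PGL}, and conjugate by $\mathrm{diag}(1,\mu)$ to arrive at the same triple \eqref{eq:triple2}.

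The congruence $q\equiv1\pmod4$ enters first when checking that this triple actually lies in $\PSL{q}$: one has $\mathrm{det}\,\alpha=-j^{-l}$ and $\mathrm{det}\,\gamma=-j^l$, and since $-1$ is a square when $q\equiv1\pmod4$ while $j^{\pm l}$ is a square because $k\mid(q-1)/2$, both determinants are squares, so $\alpha,\gamma\in\PSL{q}$. The orders then follow from Lemma~\ref{lem:order} applied to the eigenvalue $-j^{-l}$ with $m=k$: the order of $\alpha$ (and of $\gamma$) equals $2k$ if $k$ is odd, $k$ if $k\equiv0\pmod4$, and $k/2$ if $k\equiv2\pmod4$. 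This already singles out the Schl\"{a}fli types $[k,k,k]$ and $[k/2,k,k/2]$ of the statement in the two even cases.

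The decisive step, and the one I expect to be the main obstacle, is the intersection property \eqref{C3'}, since it is exactly here that odd $k$ must be eliminated. Every element of $\gen{\alpha,\beta}$ is an affine map with linear part of the form $\pm j^{ml}$, so the elements of $\gen{\alpha,\beta}$ fixing both $0$ and $\infty$ --- equivalently $\gen{\alpha,\beta}\cap\gen{\beta,\gamma}$ --- form exactly the torus $\gen{-1,j^l}$. When $k$ is even, $-1\in\gen{j^l}$ (the unique involution of $\GF{q}^\ast$ lies in every even-order cyclic subgroup), so this torus equals $\gen{\beta}$, giving $\gen{\alpha,\beta}\cong E_q\rtimes C_k$ and \eqref{C3'}. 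When $k$ is odd, a cyclic group of odd order has no involution, so $-1\notin\gen{j^l}$, and $\gen{-1,j^l}\cong C_{2k}$ strictly contains $\gen{\beta}$, so \eqref{C3'} fails and no chiral polytope arises. This is why only $k\equiv0,2\pmod4$ survive; in the $\PGL{q}$ case the difficulty was invisible because the hypothesis $k\nmid(q-1)/2$ there already forced $k$ even.

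For even $k$ satisfying $k\nmid p^e\pm1$ for all $e<d$, the remaining steps mirror Theorem~\ref{thm:PGL}. To see that $\gen{\alpha,\beta,\gamma}\cong\PSL{q}$ I would invoke Proposition~\ref{prop:max_sub}: the group is fixed point free, hence not inside a point stabiliser; it contains $E_q\rtimes C_k$ with $k>2$, so it is neither dihedral nor one of $A_4,S_4,A_5$; and $k\nmid p^e\pm1$ keeps $\beta$ out of every subfield subgroup. Chirality follows as before via Lemma~\ref{lem:rank3facesAreDirReg}: the facet group $\gen{\alpha,\beta}$ sits inside $\AGL{q}$, whose Sylow $2$-subgroup is cyclic for $q$ odd, so it contains no Klein four-group and cannot be the automorphism group of a directly regular polyhedron; hence the facets, and therefore $\poly$, are chiral. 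Finally the enumeration is identical to the $\PGL{q}$ count: up to conjugacy by $\PGL{q}\leq\mathrm{Aut}\left(\PSL{q}\right)$ and duality the triple is determined by $l$, and the $\eul{k}$ elements of order $k$ in a fixed cyclic group split into Frobenius orbits of length $d$, giving $\eul{k}/d$ non-isomorphic polytopes in each surviving case.
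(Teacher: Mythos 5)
Your proposal is correct and follows essentially the same route as the paper's proof: reduce to the normalised triple of Theorem~\ref{thm:PGL}, eliminate odd $k$ through the failure of \eqref{C3'}, read off the orders from Lemma~\ref{lem:order}, and obtain generation, chirality and the count $\eul{k}/d$ exactly as there. The only point where your justification differs is the exclusion of odd $k$: you compute the diagonal part of $\gen{\s{1},\s{2}}$ explicitly as the torus $\gen{-1,j^l}\cong C_{2k}$, whereas the paper reaches the same conclusion (that the two parabolic subgroups $E_q:C_{2k}$ share a $C_{2k}$ properly containing $\gen{\s{2}}$) via a normaliser argument using $N_{\PSL{q}}(C_k)=D_{q-1}$ and $N_{\PSL{q}}(E_q:C_k)=E_q:C_{(q-1)/2}$; your explicit determinant check that $\s{1},\s{3}\in\PSL{q}$ is a sensible addition that the paper leaves implicit.
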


\begin{proof}
This is similar to the proof of Theorem \ref{thm:PGL} except that, even though $q\equiv1\pmod{4}$, $k$ is not necessarily congruent to $0\pmod{4}$.

Suppose $k$ is odd.
Since $\gen{\s{1},\s{2}}$ is not contained in a subfield subgroup, it contains a subgroup $E_q$ and therefore has exactly two orbits on the projective line.
Hence $\s{1}$ is conjugate to an element fixing $0$ and $\infty$.
Similarly, $\s{3}$ is conjugate to an element fixing $0$ and $\infty$.
But then we have two subgroups $E_q:C_{2k}$ having a cyclic group $C_k$ in their intersection. 
The normalisers $N_{\PSL{q}}(C_k) = N_{\PSL{q}}(C_{2k}) = D_{q-1}$. Also we have $N_{\PSL{q}}(E_q:C_{2k}) = N_{\PSL{q}}(E_q:C_{k}) = E_q:C_{(q-1)/2}$. This implies the two subgroups $E_q:C_{2k}$ must have at least a subgroup $C_{2k}$ in their intersection and hence
the intersection property fails.

So $k$ must be even.
Now the computation for the order of $\s{1}$ is the same as in the proof of Theorem \ref{thm:PGL} and $\order{\s{1}}=k$ or $k/2$ according as $k\equiv0\pmod{4}$ or $k\equiv2\pmod{4}$.

It remains to show that the triple $[\s{1},\s{2},\s{3}]$ generates $\PSL{q}$.
This is straightforward since $k\mid(q-1)/2$ and for all $e<d$, $k$ does not divide $p^e\pm1$.
So $\gen{\s{1},\s{2},\s{3}}$ is not contained in a subfield subgroup and is isomorphic to $\PSL{q}$.
Note that if there exists $e<d$ such that $k\mid p^e\pm1$, then $\gen{\s{1},\s{2},\s{3}}$ is the automorphism group of a proper subfield subgroup of $\PSL{q}$.
  \end{proof}
  
  Observe that $\PSL{9}$ and $\PSL{5}\cong A_5$ are not automorphism groups of chiral $4$-polytopes.
  This observation and the previous result yield the following corollary.
  
  \begin{cor}
  Let $q:=p^d\equiv1\pmod{4}$ be a prime power.
  Then $\PSL{q}$ is the automorphism group of a chiral $4$-polytope if and only if $q>9$.
  \end{cor}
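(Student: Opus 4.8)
The plan is to prove the two implications separately, using Theorem~\ref{thm:PSL} for existence and the observation recorded just above for the two exceptional small groups.

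For the ``if'' direction, assume $q=p^d\equiv 1\pmod 4$ with $q>9$. The only prime powers congruent to $1$ modulo $4$ that are at most $9$ are $5$ and $9$, so in fact $q\geq 13$. It suffices to exhibit a single admissible parameter for Theorem~\ref{thm:PSL}, and the natural choice is $k:=(q-1)/2$. Then $k\mid(q-1)/2$ trivially; since $q\geq 13$ we have $k\geq 6>2$; and since $q\equiv1\pmod 4$ the integer $k$ is even, so $k$ lands in case (1) or (2) of the theorem. Granting the remaining hypothesis (checked below), Theorem~\ref{thm:PSL} then produces $\eul{k}/d\geq 1$ chiral $4$-polytopes with automorphism group $\PSL{q}$ (the count is a positive integer, since the order-$k$ elements form Frobenius orbits of length $d$, as in the proof of that theorem), so at least one such polytope exists.

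The only hypothesis of Theorem~\ref{thm:PSL} requiring an argument is that $k=(q-1)/2$ divides neither $p^e-1$ nor $p^e+1$ for any $e<d$; this is exactly what prevents $\gen{\s{1},\s{2},\s{3}}$ from sinking into a proper subfield subgroup. I would verify it by a coarse size comparison. If $(q-1)/2$ divided $p^e-1$ or $p^e+1$ for some $1\leq e\leq d-1$, then, since these quantities are positive, we would need $(p^d-1)/2\leq p^{d-1}+1$, that is $p^{d-1}(p-2)\leq 3$. Because $p$ is odd, $p-2\geq 1$, and this forces $p^{d-1}\leq 3$; hence either $d=1$, in which case there is no $e$ with $1\leq e<d$, or $p=3$ and $d=2$, that is $q=9$, which the hypothesis $q>9$ excludes. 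So the required divisibility condition holds for every admissible $q$.

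For the ``only if'' direction I would argue by contraposition. If $q\equiv1\pmod 4$ and $q\leq 9$, then $q\in\{5,9\}$, and by the observation preceding the corollary the groups $\PSL{5}\cong A_5$ and $\PSL{9}\cong A_6$ are not automorphism groups of any chiral $4$-polytope (this can be read off the atlas of small chiral polytopes, or verified by direct computation). Consequently no chiral $4$-polytope has automorphism group $\PSL{q}$ when $q\leq 9$, which is the desired necessity. The main obstacle is the subfield-avoidance step in the ``if'' direction: one must be certain that the single chosen parameter $k=(q-1)/2$ never collapses $\gen{\s{1},\s{2},\s{3}}$ into a proper $\PSL{p^e}$ or $\PGL{p^e}$. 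The elementary size estimate settles this, and it is reassuring that its sole failure occurs precisely at $q=9$, matching the exclusion there; everything else is a direct appeal to Theorem~\ref{thm:PSL} together with the known non-existence for $A_5$ and $A_6$.
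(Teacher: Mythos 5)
Your proposal is correct and follows essentially the same route as the paper: choose $k=(q-1)/2$, invoke Theorem~\ref{thm:PSL}, and dispose of $q\in\{5,9\}$ by the observation preceding the corollary. The only difference is that you explicitly verify the subfield-avoidance hypothesis $k\nmid p^e\pm1$ via the estimate $p^{d-1}(p-2)\leq 3$, a detail the paper leaves unchecked; your verification is valid and even pinpoints $q=9$ as the unique failure.
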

  \begin{proof}
  For each $9<q\equiv1\pmod{4}$, take $k:=(q-1)/2$ which satisfies the assumptions of Theorem \ref{thm:PSL} and so there exists a chiral $4$-polytope of type $\PSL{q}$.
  The result follows then by the above observation.
  \end{proof}
  
  This proves part \eqref{PSL_1mod4} of Theorem \ref{maintheo}.

\subsection{Case $q\equiv3\pmod{4}$}

Again here, $\alpha, \beta$ and $\gamma$ are the generators of the automorphism group of the chiral 4-polytopes we are looking at.

\begin{lem}\label{lem:FixedPointFree}
Let $q\equiv3\pmod{4}$ be a prime power.
Both subgroups $\langle \alpha,\beta\rangle$ and $\langle\beta,\gamma\rangle$ do not fix a point.
\end{lem}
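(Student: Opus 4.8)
The plan is to argue by contradiction, using the standard dichotomy for involutions of $\PGL{q}$ in odd characteristic: an involution lies in $\PSL{q}$ precisely when its determinant is a square, and this is controlled by whether its two fixed points lie on $\PG{q}$ or only on a quadratic extension. Since $q\equiv3\pmod4$ forces $-1$ to be a non-square in $\GF{q}$, an involution with a rational fixed point will be seen to fall outside $\PSL{q}$, and this is the source of the contradiction.

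Concretely, suppose $\langle\alpha,\beta\rangle$ fixes a point $P\in\PG{q}$. Then $\alpha\beta$ fixes $P$ as well, and by \eqref{C2} it is an involution (it is nontrivial, since $\alpha\beta=\epsilon$ would give $\langle\alpha\rangle=\langle\beta\rangle$, violating \eqref{C3'}). As $q$ is odd, $\alpha\beta$ is semisimple, so any lift $M\in\GL{q}$ is diagonalisable with $\Tr{M}=0$ and characteristic polynomial $x^2+\Det{M}$. The fixed point $P$ being rational means $M$ has a rational eigenvalue, so $-\Det{M}$ is a square in $\GF{q}$; equivalently $\Det{M}$ equals $-1$ times a square. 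Hence $\Det{M}$ is a square if and only if $-1$ is a square in $\GF{q}$, i.e.\ if and only if $q\equiv1\pmod4$. Since an element of $\PGL{q}$ belongs to $\PSL{q}$ exactly when its determinant is a square, the hypothesis $q\equiv3\pmod4$ gives $\alpha\beta\notin\PSL{q}$, contradicting $\alpha\beta\in\Gamma\cong\PSL{q}$. The identical computation applied to the involution $\beta\gamma$ rules out $\langle\beta,\gamma\rangle$, proving the lemma.

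The only delicate point --- and the place where the hypothesis on $q$ is genuinely used --- is the passage from the existence of a rational fixed point of $\alpha\beta$ to the non-squareness of $\Det{M}$. A more hands-on version, parallel to the proof of Theorem~\ref{thm:PGL}, makes this transparent: conjugating $P$ to $\infty$ (which is harmless, as conjugation preserves the determinant class and hence membership in $\PSL{q}$), write $\beta=\left[\begin{smallmatrix}a&b\\0&1\end{smallmatrix}\right]$ and $\alpha=\left[\begin{smallmatrix}r&s\\0&1\end{smallmatrix}\right]$, both upper triangular because they fix $\infty$. Since $\beta\in\PSL{q}$, its determinant $a$ is a square; the involution condition $\Tr{\alpha\beta}=ra+1=0$ forces $r=-a^{-1}$, so $\Det{\alpha}=r=-a^{-1}\equiv-1$ modulo squares, whence $\alpha\notin\PSL{q}$ --- the same contradiction. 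I would double-check only the normalisation that both generators are upper triangular and that $\alpha\beta\neq\epsilon$, after which the argument reduces to a one-line determinant count.
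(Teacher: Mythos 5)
Your argument is correct and is essentially the paper's own: the paper disposes of the lemma by invoking the standard fact that every involution of $\PSL{q}$ is fixed point free when $q\equiv3\pmod{4}$, applied to the involutions $\alpha\beta$ and $\beta\gamma$, and your determinant computation (rational fixed point $\Leftrightarrow$ $-\Det{M}$ a square, membership in $\PSL{q}$ $\Leftrightarrow$ $\Det{M}$ a square, incompatible since $-1$ is a non-square) is precisely the proof of that fact. The side checks you flag ($\alpha\beta\neq\epsilon$ via \eqref{C3'}, and the upper-triangular normalisation) are handled correctly.
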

\begin{proof}
This is straightforward since any involution of $\PSL{q}$ is fixed point free whenever $q\equiv3\pmod{4}$.
\end{proof}

We now show that $\PSL{q}$ is not the automorphism group of a chiral polytope of rank $4$ whenever $q=p^d\equiv3\pmod{4}$ and $d>1$ is an (odd) prime power.
\begin{lem}\label{lem:subsg}
If $d$ is an (odd) prime power and $q=p^d\equiv3\pmod{4}$, then $\PSL{q}$ is not the automorphism group of a chiral $4$-polytope.
\end{lem}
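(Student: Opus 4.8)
The plan is to show that \emph{no} triple $(\alpha,\beta,\gamma)$ generating $\Gamma:=\PSL{q}$ can satisfy \eqref{C1}--\eqref{C3}, by comparing the fields of definition of the facet, of the vertex figure, and of the whole group. Suppose for contradiction that such a triple exists. By Lemma~\ref{lem:FixedPointFree} the facet group $A:=\gen{\alpha,\beta}$ and the vertex-figure group $C:=\gen{\beta,\gamma}$ fix no point of $\PG{q}$, and by Lemma~\ref{lem:rank3facesAreDirReg} they are (proper) directly regular polyhedra. By Dickson's list of subgroups of $\PSL{q}$ recalled in Proposition~\ref{prop:max_sub}, a fixed point free subgroup that is the rotation group of a non-degenerate polyhedron is isomorphic to $A_4$, $S_4$, $A_5$, or a subfield subgroup. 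Since $d=r^a$ is an odd prime power, every proper divisor $e\mid d$ has $d/e$ a positive power of $r$, hence odd, so by Lemma~\ref{lem:PGLinPSL} no $\PGL{p^e}$ embeds in $\Gamma$ and each subfield subgroup occurring is $\PSL{p^{e}}$ with $e\mid d$ and $e<d$.

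First I would record the relevant traces. Fix lifts of $\alpha,\beta,\gamma$ to $\SL{q}$. As $q\equiv3\pmod 4$, every $\PSL$-involution lifts to an element of order $4$, hence of trace $0$; since $\alpha\beta$, $\beta\gamma$ and $\alpha\beta\gamma$ are involutions, $\Tr{\alpha\beta}=\Tr{\beta\gamma}=\Tr{\alpha\beta\gamma}=0$. Let $f_1,f_2$ be the degrees (over $\GF{p}$) of the fields of definition of $A$ and $C$. In each case above $f_i\mid d$ and $f_i<d$: for a subfield subgroup $\PSL{p^{e}}$ this degree is $e<d$, while for $A_4,S_4,A_5$ the field of definition has degree $1$ or $2$ over $\GF{p}$, and since $d$ is odd a degree‑$2$ subfield cannot lie in $\GF{q}$, forcing degree $1$. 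Because a subgroup of $\mathrm{SL}_2$ is, away from the exceptional types, determined by its trace field, we get $\GF{p}(\Tr{\alpha},\Tr{\beta})=\GF{p^{f_1}}$ and $\GF{p}(\Tr{\beta},\Tr{\gamma})=\GF{p^{f_2}}$. Putting $m:=\Max{f_1,f_2}$, all of $\Tr{\alpha},\Tr{\beta},\Tr{\gamma}$ lie in $\GF{p^{m}}$.

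Next I would compute the field of definition of $\Gamma$ itself. The invariant field of a three‑generator subgroup of $\mathrm{SL}_2$ is generated by the seven traces $\Tr{\alpha},\Tr{\beta},\Tr{\gamma},\Tr{\alpha\beta},\Tr{\beta\gamma},\Tr{\alpha\gamma},\Tr{\alpha\beta\gamma}$; three of these vanish, so this field equals $\GF{p}\!\left(\Tr{\alpha},\Tr{\beta},\Tr{\gamma},\Tr{\alpha\gamma}\right)$. Specialising the Fricke identity at $\Tr{\alpha\beta}=\Tr{\beta\gamma}=\Tr{\alpha\beta\gamma}=0$ yields
\[
\Tr{\alpha\gamma}^2-\Tr{\alpha}\Tr{\gamma}\,\Tr{\alpha\gamma}+\Tr{\alpha}^2+\Tr{\beta}^2+\Tr{\gamma}^2-4=0 ,
\]
so $\Tr{\alpha\gamma}$ is at most quadratic over $\GF{p^{m}}$ and the field of definition of $\Gamma$ is contained in $\GF{p^{2m}}$. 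Since $\Gamma=\PSL{p^d}$ is not definable over any proper subfield, this field equals $\GF{q}=\GF{p^{d}}$, whence $d\mid 2m$. But $m$ is a proper divisor of the odd prime power $d=r^{a}$, so $m\le d/r$ and $2m\le 2d/r<d$ because $r\ge 3$, a contradiction. Hence no such triple exists, and in particular $\PSL{q}$ is not the automorphism group of a chiral $4$-polytope.

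I expect the main obstacle to be the classification step together with the verification that $f_i<d$ in every case, in particular that $A_5$ is definable over $\GF{p}$ exactly because $d$ is odd; the oddness of $d$ is then used a second, decisive time to forbid the quadratic jump $\GF{p^{m}}\subseteq\GF{p^{2m}}\subseteq\GF{q}$. If one prefers to avoid the Fricke computation, the same conclusion should follow by locating $A$ and $C$ in minimal subfield subgroups and applying Proposition~\ref{prop:interSubf}, but controlling the join $\gen{A,C}$ there seems to require precisely the field‑of‑definition information made explicit above.
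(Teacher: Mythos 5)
Your proof is correct and, at the level of strategy, runs parallel to the paper's: both arguments place the facet group $\gen{\alpha,\beta}$ and the vertex-figure group $\gen{\beta,\gamma}$ inside proper subfield subgroups $\PSL{p^{e_1}}$, $\PSL{p^{e_2}}$ (using that $A_4$, $S_4$, $A_5$ are defined over $\GF{p}$ when $d$ is odd, and that no $\PGL{p^e}$ occurs), both exploit the hypothesis that $d$ is a prime power to replace $\Lcm{e_1,e_2}$ by $\Max{e_1,e_2}<d$, both reduce the remaining freedom in $\gamma$ to a single quadratic equation over $\GF{p^{\Max{e_1,e_2}}}$, and both exclude the resulting quadratic extension because $d$ is odd. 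The genuine difference is how the quadratic is produced: the paper normalises $\alpha\beta$ to the standard involution and computes the entries of $\gamma$ explicitly, whereas you specialise the Vogt--Fricke identity at $\Tr{\alpha\beta}=\Tr{\beta\gamma}=\Tr{\alpha\beta\gamma}=0$ (your specialisation is correct) and then compare trace fields. Your version is coordinate-free and makes the roles of the two hypotheses ($d$ a prime power, $d$ odd) transparent, at the cost of invoking two external facts the paper avoids: that the trace ring of a three-generator subgroup of $SL(2)$ is generated by the seven basic traces (valid in characteristic $p$ since the reduction identities have integer coefficients), and that over a finite field the trace field of an absolutely irreducible subgroup equals its field of definition up to conjugacy --- the latter is needed only in the easy direction for $A$ and $C$, and for $\Gamma$ you only use that its trace field is all of $\GF{p^d}$, which is elementary. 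Two caveats, both shared with the paper's own proof: the hypothesis should read $d>1$ (for $d=1$ your claim $f_i<d$ fails, the exceptional subgroups being defined over $\GF{p}=\GF{q}$, and the conclusion is false by Theorem~\ref{maintheo}(4)); and both proofs silently discard the possibility that a parabolic subgroup is dihedral, i.e.\ that a polyhedral section is degenerate.
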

\begin{proof}
Suppose $\set{\s{1},\s{2},\s{3}}$ is the set of the distinguished generators of $\Gamma$, the automorphism group of a chiral $4$-polytope with $\Gamma\cong\PSL{q}$.
By Lemma \ref{lem:FixedPointFree}, none of the maximal parabolic subgroups $\gen{\s{1},\s{2}}$ and $\gen{\s{2},\s{3}}$ are subgroups of $\AGL{q}$.
So we may assume that $\gen{\s{1},\s{2}}\leq\PSL{p^{e_1}}$ and $\gen{\s{2},\s{3}}\leq\PSL{p^{e_2}}$, where $e_i<d$.
Indeed, if $\gen{\s{1},\s{2}}$ or $\gen{\s{2},\s{3}}$ is a subgroup of one of $A_5$, $A_4$ or $S_4$, then it is a subgroup of a subfield subgroup because none of $A_5$, $A_4$ nor $S_4$ is maximal in $\PSL{p^d}$ when $d>2$.
Let $F=\GF{p^{\Lcm{e_1,e_2}}}$, that is, $F$ is the smallest field containing all the generators $\s{1}$, $\s{2}$ and $\s{3}$.
Assume, without loss of generality, that $\gen{\s{1},\s{2}}$ stabilises the unique subfield $\GF{p^{e_1}}$ of $F$.
Since $d$ is a prime power, we have $\Lcm{e_1,e_2}=\Max{e_1,e_2}<d$ so $F\not=\GF{q}$.
All involutions of $\PSL{p^{e_1}}$ are conjugate, so $\s{1}$ and $\s{2}$ can be chosen so that
\begin{align*}
\s{1}\s{2}=
\left[
\begin{matrix}
0&1\\-1&0
\end{matrix}
\right].
\end{align*}
Let $\s{2}$ and $\s{3}$ be defined by
\begin{align*}
\s{2}=
\left[
\begin{matrix}
a&b\\c&d
\end{matrix}
\right]
&&\text{and}&&
\s{3}=
\left[
\begin{matrix}
w&x\\y&z
\end{matrix}
\right],
\end{align*}
where $a,b,c,d\in\GF{p^{e_1}}\leq F$, $w,x,y,z\in\GF{q}$ and $\Det{\s{2}}=\Det{\s{3}}=1$.
Let $\omega$ be the trace of $\s{3}$, that is $\omega=w+z$.
All subgroups isomorphic to $\PSL{p^{e_2}}$ are conjugate in $\PSL{q}$, so $\omega\in\GF{p^{e_2}}$ because the trace is invariant under conjugation.
Since $\s{1}\s{2}\s{3}$ is an involution, we have that $x=y$.
Moreover, since $\s{2}\s{3}$ is an involution, we have 
\begin{align*}
aw+x(b+c)+dz=0.
\end{align*}
That is,
\begin{align}\label{eq:d1}
w(d-a)=d\omega+x(b+c)
\end{align}
and
\begin{align}\label{eq:d2}
z(d-a)=-a\omega-x(b+c).
\end{align}
If $d-1=0$, then $x^2=wz-1$ and all the coefficients of $\s{3}$ belong to $F$ or to the quadratic extension of $F$.
Suppose now that $d-a\not=0$.
Since the determinant of $\s{3}$ equals $1$, we have $wz-xy=1$, and multiplying by $(d-a)^2$ gives us
\begin{align}\label{eq:d3}
wz(d-a)^2-x^2(d-a)^2=(d-a)^2.
\end{align}
Putting \eqref{eq:d1}, \eqref{eq:d2} and \eqref{eq:d3} together yields the following quadratic equation in $F$
\begin{align*}
-x^2(b+c)^2+x(b+c)(a+d)+ad\omega-x^2(d-a)^2=(d-a)^2.
\end{align*}
Hence $x$ has either a solution in $F$ or in the quadratic extension of $F$.
It follows that all coefficients of $\s{3}$ are either scalars of $F$ or of the quadratic extension of $F$.
In the latter case, $\s{3}$ would belong to $\PSL{p^{2\Lcm{e_1,e_2}}}$, which is impossible since $d$ is odd and $\PSL{p^e}\leq\PSL{q}$ implies $e$ odd.
If all coefficients of $\s{3}$ belong to $F$, then $\s{3}$ stabilises $F$ and thus $\gen{\s{1},\s{2},\s{3}}\cong\PSL{F}\not\cong\PSL{q}$, a contradiction.
As a conclusion, we showed that there is no chiral $4$-polytope with automorphism group $\PSL{q}$ whenever $q=p^d\equiv3\pmod{4}$ and $d>1$ is a prime power.
\end{proof}

Note that this statement does not hold with the assumption that $q\equiv1\pmod{4}$ even if the parabolic subgroups are fixed point free.
For instance, if $q=169$ and $\gen{\s{1},\s{2}}\cong\PSL{13}$, then by \cite[Proposition 6]{Hypermaps}, $\gen{\s{2},\s{3}}$ is a subgroup of $\PSL{169}$ but not of $\PSL{13}$.
However, $\gen{\s{2},\s{3}}$ can be $A_5$ or $\PGL{13}$.
A {\sc Magma}~\cite{BCP97} computation shows that there exists indeed a chiral polytope of type $[7,3,5]$, whose parabolic subgroups are $\PSL{13}$ and $A_5$.
A complete computation shows that there are exactly $44$ chiral polytopes of rank four with $\PSL{169}$ as automorphism group.
Table \ref{table:169} gives those polytopes, up to duality, with their parabolic subgroups.
\begin{table}
{\renewcommand{\arraystretch}{1.0}%
  \centering
$\begin{array}{|c|c|c|c|}
\hline
\text{type}&\#&\gen{\s{1},\s{2}}&\gen{\s{2},\s{3}}\\
\hline
[4,3,5]&2&S_4&A_5\\

[6,3,5]&2&E_{13}:C_6&A_5\\

[7,3,5]&2&\PSL{13}&A_5\\

[13,3,5]&2&\PSL{13}&A_5\\

[14,3,5]&2&\PGL{13}&A_5\\

[21,42,21]&6&E_{169}:C_{42}&E_{169}:C_{42}\\

[28,28,28]&6&E_{169}:C_{28}&E_{169}:C_{28}\\

[84,84,84]&12&E_{169}:C_{84}&E_{169}:C_{84}\\
\hline
\end{array}$}

\caption{Chiral polytopes of $\PSL{169}$ and their parabolic subgroups}
\label{table:169}
\end{table}

Let $p\equiv3\pmod{4}$.
In the next lemma, we reduce the list of the possible types that chiral polytopes of rank $4$ with $\PSL{p}$ as automorphism group might have.

\begin{lem}\label{lem:reduced_list}
  Let $p\equiv3\pmod{4}$ be a prime power.
  Up to duality, the type of a chiral polytope of rank four with automorphism group isomorphic to $\PSL{p}$ is one of $[3,5,3]$, $[5,3,5]$ or $[5,3,4]$.
\end{lem}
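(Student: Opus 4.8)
The plan is to prove Lemma~\ref{lem:reduced_list} by heavily exploiting the structural constraints already established, reducing a seemingly large search over types to a very short list. By Lemma~\ref{lem:FixedPointFree}, when $q=p\equiv3\pmod{4}$ the parabolic subgroups $\gen{\s{1},\s{2}}$ and $\gen{\s{2},\s{3}}$ are fixed point free, hence not contained in a point stabiliser $E_p\rtimes C_{(p-1)/2}$. Combined with the fact that they must be automorphism groups (or rotation subgroups) of directly regular polyhedra of rank~$3$ by Lemma~\ref{lem:rank3facesAreDirReg}, the classification of maximal subgroups in Proposition~\ref{prop:max_sub} forces each parabolic to lie in one of the remaining maximal subgroup types: a dihedral group $\D{p\pm1}$, or one of $\Alt{4}$, $\Sym{4}$, $\Alt{5}$ (the subfield subgroups are excluded since $q=p$ is prime). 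First I would argue that a dihedral group cannot be the rotation subgroup of a directly regular polyhedron whose generators satisfy the intersection property together with the third generator, so the parabolics must be among $\Alt{4}$, $\Sym{4}$, $\Alt{5}$.

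Next I would translate the rank-$3$ directly regular constraint into a constraint on the Schl\"afli type of each parabolic. The even (rotation) subgroups of a directly regular polyhedron of type $\set{r,s}$ are generated by two rotations $\sigma,\tau$ with $\sigma^r=\tau^s=(\sigma\tau)^2=1$, so each parabolic is a quotient of a $(2,r,s)$ triangle group, i.e.\ isomorphic to the rotation group of a spherical polyhedron. I would enumerate which of $\Alt{4}$, $\Sym{4}$, $\Alt{5}$ arise this way and with which types: $\Alt{4}$ realises only $\set{3,3}$, $\Sym{4}$ realises $\set{3,4}$ and $\set{4,3}$, and $\Alt{5}$ realises $\set{3,5}$, $\set{5,3}$, $\set{5,5}$. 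The Schl\"afli symbol $\set{p_1,p_2,p_3}$ of the rank-$4$ polytope has $\gen{\s{1},\s{2}}$ of type $\set{p_1,p_2}$ and $\gen{\s{2},\s{3}}$ of type $\set{p_2,p_3}$, so the middle entry $p_2=\order{\s{2}}$ is shared. I would then cross-match: the shared middle order $p_2$ must be simultaneously a valid entry for both parabolics' types.

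The main combinatorial step is this cross-matching. Since a $\set{3,3}$-parabolic ($\Alt{4}$) would force $p_2=3$ on one side, and a chiral polytope cannot have both parabolics equal to $\Alt{4}$ (that yields the self-dual $4$-simplex, a regular polytope, not chiral, exactly as in the proof of Theorem~\ref{thm:rank5}), I would eliminate the all-$\Alt{4}$ case. Working through the remaining pairs with a common middle value $p_2\in\set{3,4,5}$ and discarding duals, the surviving triples are precisely $[3,5,3]$ (both parabolics $\Alt{5}$ of types $\set{3,5}$ and $\set{5,3}$), $[5,3,5]$ (both $\Alt{5}$ with common middle $3$), and $[5,3,4]$ (one parabolic $\Alt{5}$ of type $\set{5,3}$, the other $\Sym{4}$ of type $\set{3,4}$), up to duality.

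The step I expect to be the genuine obstacle is ruling out the remaining candidate types that survive the purely group-order bookkeeping but fail to give a chiral polytope for arithmetic reasons specific to $\PSL{p}$. In particular I must exclude combinations whose common middle generator $\s{2}$ forces an order incompatible with $p\equiv3\pmod4$ (for instance checking that elements realising the required orders actually exist and generate the claimed $\Alt{5}$ or $\Sym{4}$ rather than a smaller or fixed-point subgroup), and I must verify that candidate types such as $[5,5,5]$ or $[4,3,4]$ either violate the intersection property \eqref{C3'} or produce a directly regular (non-chiral) polytope rather than a chiral one. Handling these borderline eliminations carefully, rather than the enumeration itself, is where the real work lies; I would dispatch them by appealing to Theorem~\ref{RegRank4} to exclude the directly regular possibilities and to the normaliser computations (as in the proof of Theorem~\ref{thm:PSL}) to exclude the intersection-property failures.
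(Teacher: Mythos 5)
Your overall strategy --- restrict the parabolic subgroups to $A_4$, $S_4$ or $A_5$ via Lemma~\ref{lem:FixedPointFree}, deduce that every Schl\"afli entry lies in $\{3,4,5\}$, and then cross-match the admissible parabolic types $\{3,3\}$, $\{3,4\}$, $\{4,3\}$, $\{3,5\}$, $\{5,3\}$, $\{5,5\}$ on the shared middle entry --- is exactly the paper's strategy, and that part of your argument is sound. The gap is in the elimination phase. Your cross-matching does \emph{not} leave only $[3,5,3]$, $[5,3,5]$ and $[5,3,4]$: up to duality it also leaves $[3,3,3]$, $[3,3,4]$, $[3,3,5]$, $[3,4,3]$, $[4,3,4]$, $[3,5,5]$ and $[5,5,5]$. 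The spherical ones ($[3,3,3]$, $[3,3,4]$, $[3,3,5]$, $[3,4,3]$) are harmless, since the corresponding even Coxeter groups are finite and cannot surject onto $\PSL{p}$ for the relevant $p$, but you only explicitly dispose of $[3,3,3]$.

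The real problem is $[4,3,4]$, $[5,5,5]$ and $[3,5,5]$, which you acknowledge as ``borderline'' but propose to dispatch with Theorem~\ref{RegRank4} and the normaliser computations from the proof of Theorem~\ref{thm:PSL}. Neither tool applies. Theorem~\ref{RegRank4} rules out \emph{regular} polytopes, not $\PSL{q}$-quotients of these even Coxeter groups; and the normaliser argument in the proof of Theorem~\ref{thm:PSL} concerns two parabolic subgroups of the form $E_q\rtimes C_{2k}$ inside point stabilisers, which is precisely the configuration that Lemma~\ref{lem:FixedPointFree} excludes when $p\equiv3\pmod{4}$. For $[5,5,5]$ and $[5,5,3]$ the paper instead uses that a relation such as $\s{1}\s{2}^2\s{1}^3\s{2}^2\s{1}^2=\epsilon$ (and its analogue in $\s{2},\s{3}$) holds in an $A_5$ parabolic of type $\{5,5\}$, and that adjoining these relations to the presentation of the even Coxeter group collapses it to a group of order $7200$; this is a coset-enumeration computation, not a normaliser argument. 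For $[4,3,4]$ the even Coxeter group is infinite (the rotation group of the Euclidean cubic honeycomb) with many finite quotients, and the paper excludes it by an L2-quotient algorithm computation showing it has no quotient isomorphic to $\PSL{q}$ at all. Without these two computations (or substitutes for them) your proof does not close.
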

\begin{proof}
By Lemma \ref{lem:FixedPointFree}, the parabolic subgroups of a chiral $4$-polytope $\poly$ with $\PSL{p}$ as automorphism group must be one of $A_4$, $S_4$ or $A_5$.
Hence the Schl\"{a}fli symbols of $\poly$ belong to the set $\set{3,4,5}$.
As a first step, we can eliminate the types corresponding to finite Coxeter groups, namely the $4$-simplex $[3,3,3]$ with automorphism group $A_5$; the tesseract $[3,3,4]$ and its dual, the $16$-cell $[4,3,3]$, with automorphism group of order $192$; the $120$-cell $[3,3,5]$ its dual, the $600$-cell $[5,3,3]$, with automorphism group of order $14400$ and the $24$-cell $[3,4,3]$ with automorphism group of order $1152$.
By a computation using the L2-quotient algorithm \cite{J2015}, the Coxeter even group $[4,3,4]^+$  has no quotient isomorphic to $\PSL{q}$.
Since $A_5$ is not a quotient of $[4,5]^+$, we can eliminate $[3,4,5]$, $[3,5,4]$, $[4,4,5]$, $[4,5,4]$, $[4,5,5]$, $[5,4,3]$, $[5,4,4]$, $[5,4,5]$ and $[5,5,4]$.
Moreover, none of $A_4$, $S_4$ or $A_5$ is a quotient of $[4,4]^+$, so we can also rule out $[3,4,4]$, $[4,4,3]$ and $[4,4,4]$.
It remains to consider $[5,5,5]$ where  $\gen{\s{1},\s{2}}\cong\gen{\s{2},\s{3}}\cong A_5$ and $[3,5,5]$ where $\gen{\s{1},\s{2}}\cong\gen{\s{2},\s{3}}\cong A_5$.
Suppose $\gen{\s{1},\s{2},\s{3}}$ is a quotient of the even Coxeter group $[5,5,5]^+$, where $\gen{\s{1},\s{2}}\cong A_5\cong\gen{\s{2},\s{3}}$.
It is easy to check that $\s{1}\s{2}^2\s{1}^3\s{2}^2\s{1}^2=\epsilon$ and $\s{2}\s{3}^2\s{2}^3\s{3}^2\s{2}^2=\epsilon$.
Adding those relations to the presentation of $[5,5,5]^+$ gives a group of order $7200$ isomorphic to $[3,3,5]^+$.
Now if $\gen{\s{1},\s{2},\s{3}}$ is a quotient of the even Coxeter group $[5,5,3]^+$, where $\gen{\s{1},\s{2}}\cong A_5\cong\gen{\s{2},\s{3}}$, then adding the relation $\s{1}\s{2}^2\s{1}^3\s{2}^2\s{1}^2=\epsilon$ gives the same group of order $7200$ as above.
\end{proof}

%
%

In Lemma \ref{lem:reduced_list}, we proved that if $q=p\equiv3\pmod{4}$, then each chiral $4$-polytope with $\PSL{q}$ as automorphism group must have one of the following types: $[5,3,5]$, $[3,5,3]$ or $[5,3,4]$ (or its dual $[4,3,5]$).
Moreover, by Lemma \ref{lem:subsg}, if $q\equiv3\pmod{4}$ and $\PSL{q}$ is the automorphism group of a chiral $4$-polytope, then $q=p$.
We now classify the chiral $4$-polytopes with $\PSL{q}$ as automorphism group, when $q=p^d\equiv3\pmod{4}$ and $d\geq1$ is a prime power, by counting the number of those chiral $4$-polytopes of type $[5,3,5]$, $[3,5,3]$ and $[5,3,4]$.

%
\subsubsection{Polytopes of type $[5,3,4]$}\label{sec:534}
%

%

The notation in this section is mostly taken from \cite{JL2013} and \cite{JLM2011}.
Let $\Gamma:=[5,3,4]$ be the Coxeter group of type $[5,3,4]$ and let $\Gamma^+$ be the even subgroup of $\Gamma$, that is the group with presentation
\begin{align*}
  \Gamma^+:=\gen{\s{1},\s{2},\s{3}|\s{1}^5,\s{2}^3,\s{3}^4,(\s{1}\s{2})^2,(\s{2}\s{3})^2,(\s{1}\s{2}\s{3})^2}.
\end{align*}
Let $\Gamma_0^+ := \langle \alpha,\beta\rangle$.
Recall that a perfect group is a group that is equal to its own commutator subgroup or equivalently, whose abelian quotient is trivial.
Recall also that $G^{ab}$ is the quotient of a group $G$ by its commutator subgroup.

\begin{lem}\label{lem:abquot}
If $G\rightarrow H$ is a group epimorphism, then $\ab{H}$ is isomorphic to a subgroup of $\ab{G}$.
In particular, if $G$ is perfect, then $H$ is perfect.
\end{lem}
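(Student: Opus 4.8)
The plan is to show that a group epimorphism induces an epimorphism on abelianisations, and then to read off both assertions. First I would record the behaviour of commutators under a surjection $\phi\colon G\to H$: since $\phi$ carries each commutator $[g_1,g_2]$ to the commutator $[\phi(g_1),\phi(g_2)]$, we have $\phi([G,G])\leq[H,H]$, and conversely the surjectivity of $\phi$ guarantees that every generator $[h_1,h_2]$ of $[H,H]$ is the image of some commutator of $G$, so in fact $\phi([G,G])=[H,H]$.

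Next I would compose $\phi$ with the canonical projection $H\to\ab{H}$. The resulting surjection $G\to\ab{H}$ kills $[G,G]$ by the previous step, hence factors through $\ab{G}=G/[G,G]$ and produces a surjective homomorphism $\bar\phi\colon\ab{G}\to\ab{H}$. Thus $\ab{H}$ is an epimorphic image of the abelian group $\ab{G}$. From this the ``in particular'' is immediate: if $G$ is perfect then $\ab{G}$ is trivial, and a surjective image of the trivial group is trivial, so $\ab{H}=1$ and $H$ is perfect.

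To reach the stated conclusion that $\ab{H}$ is isomorphic to a \emph{subgroup} of $\ab{G}$ rather than merely a quotient, I would invoke finiteness, which holds for all abelianisations occurring here: $\ab{H}$ is then a quotient of the finite abelian group $\ab{G}$, and every quotient of a finite abelian group is isomorphic to one of its subgroups. One way to see the latter cleanly is by duality, since $\widehat{\ab{G}}\cong\ab{G}$ for finite abelian groups, so quotients of $\ab{G}$ correspond to subgroups of $\ab{G}$; alternatively it follows directly from the invariant factor decomposition.

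The only delicate point is precisely this passage from ``quotient'' to ``subgroup'', which genuinely requires finiteness (for infinite groups the surjection $\bar\phi$ need not split, as $\mathbb{Z}\to\mathbb{Z}/2\mathbb{Z}$ shows). For the perfectness statement, which is what is used in the sequel, this issue does not arise, since there the conclusion follows from the existence of $\bar\phi$ alone.
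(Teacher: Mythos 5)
Your proof is correct and follows essentially the same route as the paper: both arguments establish that the epimorphism $G\rightarrow H$ induces an epimorphism $\ab{G}\rightarrow\ab{H}$ (the paper phrases this as right-exactness of the abelianisation functor, you verify it directly on commutators), and both read off the perfectness statement from the triviality of $\ab{G}$. In fact you go one step further than the paper: its proof stops at the epimorphism $\ab{G}\rightarrow\ab{H}$ and never justifies the stated conclusion that $\ab{H}$ embeds as a \emph{subgroup} of $\ab{G}$, whereas you correctly identify that this passage from quotient to subgroup requires an extra argument (finiteness plus duality or the invariant factor decomposition) and is false for infinite abelian groups. Since only the perfectness consequence is used in the sequel, the paper's omission is harmless, but your version is the more complete proof of the lemma as stated.
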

\begin{proof}
The sequence $G\rightarrow H\rightarrow0$ is exact.
Since the abelianization functor is right exact, the sequence $\ab{G}\rightarrow\ab{H}\rightarrow0$ is exact too.
So $\ab{G}\rightarrow\ab{H}$ is an epimorphism.
\end{proof}

Let $\GFb{p}$ be the algebraic closure of $\GF{p}$, that is
\begin{align*}
\GFb{p}=\bigcup_{i=1}^\infty\GF{p^i}.
\end{align*}
Define $\bar{L}_p:=\PSL{\GFb{p}}$.
When the characteristic $p$ does not need to be mentioned, we simply omit it and write $\bar{L}$ instead.

\begin{lem} \label{lem:perfect}
Any homomorphism $\Gamma^+\rightarrow\bar{L}$ preserving the order of $\s{3}$ is an epimorphism onto $\PSL{q}$ or $\PGL{q}$, for some prime power $q=p^d$.
Moreover, if $q>5$, then any epimorphism $\Gamma^+\rightarrow\PSL{q}$ or $\PGL{q}$ preserves the order of $\s{1}$, $\s{2}$ and $\s{3}$.
\end{lem}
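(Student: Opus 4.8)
The plan is to prove the two assertions separately, using throughout the triangle-group structure forced by the presentation of $\Gamma^+$: the generators $\s{1},\s{2}$ satisfy $\s{1}^5=\s{2}^3=(\s{1}\s{2})^2=1$, so $\gen{\s{1},\s{2}}$ is a quotient of the ordinary $(2,3,5)$ triangle group $\cong\Alt{5}$, and likewise $\gen{\s{2},\s{3}}$ is a quotient of the $(2,3,4)$ triangle group $\cong\Sym{4}$. Consequently any homomorphic image $\phi(\gen{\s{1},\s{2}})$ is a quotient of $\Alt{5}$ (hence trivial or $\cong\Alt{5}$), and $\phi(\gen{\s{2},\s{3}})$ is a quotient of $\Sym{4}$; I will lean on the fact that the only quotient of $\Sym{4}$ containing an element of order $4$ is $\Sym{4}$ itself.

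For the first assertion, let $\phi\colon\Gamma^+\to\bar{L}$ satisfy $\order{\phi(\s{3})}=4$. First I would observe that $\phi(\Gamma^+)$ is finite: lifting the three generators to $\SL{\GFb{p}}$, their finitely many matrix entries lie in a single finite subfield $\GF{p^m}$, so $\phi(\Gamma^+)\leq\PSL{p^m}$. Since $\GFb{p}$ carries no element of order $4$ in characteristic $2$ (a unipotent element is an involution and a semisimple element has odd order), the hypothesis forces $p$ odd. Now $\phi(\gen{\s{2},\s{3}})$ is a quotient of $\Sym{4}$ containing the order-$4$ element $\phi(\s{3})$, hence $\phi(\gen{\s{2},\s{3}})\cong\Sym{4}$; in particular $\phi(\Gamma^+)$ contains a copy of $\Sym{4}$ and $\order{\phi(\s{2})}=3$. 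Moreover $\phi(\Gamma^+)\neq\Sym{4}$, for otherwise $\phi(\gen{\s{1},\s{2}})$ would be a quotient of $\Alt{5}$ embedded in $\Sym{4}$, hence trivial, forcing $\phi(\s{2})=1$ and contradicting $\order{\phi(\s{2})}=3$. Thus $\phi(\Gamma^+)$ is a finite subgroup of $\PSL{\GFb{p}}$ properly containing $\Sym{4}$. Invoking Dickson's classification of finite subgroups of $\PSL{\GFb{p}}$, the cyclic, dihedral and Borel (point-stabiliser) subgroups are all metabelian and so cannot contain the non-metabelian group $\Sym{4}$, while $\Alt{4}$ and $\Alt{5}$ are too small; hence $\phi(\Gamma^+)\cong\PSL{p^f}$ or $\PGL{p^f}$ for some $f$, and $\phi$ is an epimorphism onto such a group, as claimed. (The computation $\ab{(\Gamma^+)}\cong C_2$, together with Lemma~\ref{lem:abquot}, gives an alternative route to discarding the imprimitive cases.)

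For the second assertion, let $\psi\colon\Gamma^+\to G$ be an epimorphism with $G\cong\PSL{q}$ or $\PGL{q}$ and $q>5$; then $\order{G}>120$ in every such case, the smallest value being $\order{\PSL{7}}=168$, while $120$ is attained exactly at the excluded boundary case $\PGL{5}\cong\Sym{5}$. The crucial step is to show $\order{\psi(\s{3})}=4$. Since $\order{\psi(\s{3})}$ divides $4$, if it were not $4$ it would divide $2$, so $\psi(\s{3})^2=1$ and $\psi$ would factor through the quotient of $\Gamma^+$ obtained by adjoining $\s{3}^2=1$, namely the rotation group $[5,3,2]^+$. As the Coxeter diagram of $[5,3,2]$ decomposes as $H_3\sqcup A_1$, that group has order $240$ and $[5,3,2]^+$ has order $120$, whence $\order{G}\leq120$, a contradiction; so $\order{\psi(\s{3})}=4$. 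The remaining orders follow from the relations: if $\psi(\s{2})=1$ then $(\s{1}\s{2})^2=1$ gives $\psi(\s{1})^2=1$, and together with $\psi(\s{1})^5=1$ this forces $\psi(\s{1})=1$, making $G$ cyclic of order at most $4$; and if $\psi(\s{1})=1$ then $\psi(\s{2})^2=1$ with $\psi(\s{2})^3=1$ forces $\psi(\s{2})=1$, the same contradiction. Hence $\order{\psi(\s{1})}=5$ and $\order{\psi(\s{2})}=3$, and all three orders are preserved.

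I expect the first assertion to be the delicate part: the reduction to Dickson's list must rule out every reducible or exceptional finite subgroup of $\PSL{\GFb{p}}$, and the cleanest way to do so is precisely the observation that $\Sym{4}$ is non-metabelian, which excludes the cyclic, dihedral and Borel families in one stroke, after which identifying $\phi(\Gamma^+)$ as a subfield subgroup is immediate. By contrast the order bookkeeping in the second assertion is entirely elementary once the orders $\order{[5,3,2]^+}=120$ and $\order{G}>120$ are recorded, the hypothesis $q>5$ being exactly what separates these two numbers.
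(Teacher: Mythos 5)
Your proof is correct, and for the first assertion it takes a genuinely different route from the paper's. The paper anchors the argument at the other end of the diagram: since $\Gamma_0^+=\gen{\s{1},\s{2}}\cong\Alt{5}$ is simple and perfect, its image is $\Alt{5}$, and then Lemma~\ref{lem:abquot} (right-exactness of abelianisation, so the abelianisation of the image embeds in $\ab{\Gamma^+}\cong C_2$) is what eliminates $\Alt{4}$, the point stabilisers $E_q\rtimes C_m$ and the cyclic groups from Dickson's list, the dihedral ones being excluded because they contain no $\Alt{5}$. You instead anchor at $\gen{\s{2},\s{3}}$: the order-$4$ hypothesis forces its image to be the full $\Sym{4}$, and the observation that $\Sym{4}$ is not metabelian disposes of the cyclic, dihedral and Borel families in one stroke, with Lagrange handling $\Alt{4}$ and $\Alt{5}$ (for $\Alt{5}$ the precise point is $24\nmid 60$ rather than ``too small'', but the conclusion stands). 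A side benefit of your version is that it does not need the restriction to $\Gamma_0^+$ to be faithful a priori --- the degenerate case $\phi(\s{2})=1$ is excluded by the $\Sym{4}$ step --- whereas the paper's assertion that this restriction ``is an isomorphism onto $A_5$'' quietly skips the possibility that it is trivial. For the second assertion both proofs reduce the order-collapsing case to a group of order $120$ (your $[5,3,2]^+$ is the paper's $\Alt{5}\times C_2$), but you additionally spell out why the orders of $\s{1}$ and $\s{2}$ are preserved, which the paper leaves to the perfectness argument of its first paragraph.
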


\begin{proof}
Let $\theta:\Gamma^+\rightarrow\bar{L}$ be a non trivial homomorphism.
Since $\Gamma^+$ is finitely generated, its image must also be finitely generated, and thus isomorphic to a subgroup of $\PSL{q}$.
Since $A_5$ is perfect, the restriction of $\theta$ to $\Gamma_0^+:=[5,3]^+$ is an isomorphism onto $A_5$, so the orders of $\s{1}$ and $\s{2}$ are preserved.
The image of $\theta$ cannot be isomorphic to $A_5$, $S_4$, $S_3$ or $C_2$.
By Lemma \ref{lem:abquot}, the abelian quotient of the image of $\Gamma^+$ must be isomorphic to a subgroup of $\ab{\Gamma^+}\cong C_2$.
We have $\ab{\PSL{q}}\cong\{1\}$ since $\PSL{q}$ is perfect; for $q$ odd, $\ab{\PGL{q}}\cong\C{2}$; $\ab{A_4}\cong\ab{A_3}\cong\C{3}$; $\ab{\AGL{q}}\cong\C{q-1}$ and $\ab{\C{k}}=\C{k}$.
Clearly, $\theta(\Gamma^+)$ cannot be dihedral and so must be isomorphic to $\PSL{q}$ or $\PGL{q}$.

If a homomorphism $\Gamma^+\rightarrow\bar{L}$ does not preserve the order of $\s{3}$, then its image is a quotient of $A_5\times\C{2}$ which has no subgroups isomorphic to $\PSL{q}$ or $\PGL{q}$ unless $q\leq5$.
\end{proof}

We show that any epimorphism from $\Gamma^+$ onto a projective special linear group gives rise to a group satisfying the intersection condition $\ref{C3'}$.
Note that the regular case was already covered by \cite{intersection}, for the $[5,3,4]$ Coxeter group, but also for the $[5,3,5]$, $[3,5,3]$ and $[4,3,4]$ Coxeter groups.

\begin{lem}\label{lem:intersection_534}
If $q\not=4,5$, then any quotient of $\Gamma^+$ isomorphic to $\PSL{q}$ satisfies the intersection condition.
\end{lem}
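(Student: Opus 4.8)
The plan is to verify the three clauses of \eqref{C3'} for $\s{1},\s{2},\s{3}$, whose orders are $5,3,4$ (these orders are preserved by Lemma~\ref{lem:perfect}, since $q>5$). The clauses $\gen{\s{1}}\cap\gen{\s{2}}=\set{\epsilon}$ and $\gen{\s{2}}\cap\gen{\s{3}}=\set{\epsilon}$ are automatic, because each is a subgroup of two cyclic groups of coprime orders, $\Gcd{5,3}=\Gcd{3,4}=1$. For the third clause I first pin down the parabolic subgroups. The subgroup $\gen{\s{1},\s{2}}$ is the image of $[5,3]^+\cong A_5$, which is simple, so $\gen{\s{1},\s{2}}\cong A_5$; the subgroup $\gen{\s{2},\s{3}}$ is the image of $[3,4]^+\cong S_4$, whose only quotients are $S_4,S_3,C_2,1$, and since $\s{3}$ has order $4$ it must be $\gen{\s{2},\s{3}}\cong S_4$. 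Writing $H:=\gen{\s{1},\s{2}}\cap\gen{\s{2},\s{3}}$, we have $\gen{\s{2}}\cong C_3\leq H$, and $H$ is a common subgroup of $A_5$ and $S_4$ of order divisible by $3$; inspecting the two subgroup lattices, the only possibilities are $H\cong C_3$, $H\cong S_3$ or $H\cong A_4$. It remains to exclude the last two.

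The case $H\cong A_4$ is settled directly from the relations. From $(\s{2}\s{3})^2=\epsilon$ we obtain $\s{3}\s{2}\s{3}=\s{2}^{-1}$, hence also $\s{3}^{-1}\s{2}^{-1}\s{3}^{-1}=\s{2}$, and from $(\s{1}\s{2}\s{3})^2=\epsilon$ we obtain $\s{3}\s{1}\s{2}\s{3}=\s{2}^{-1}\s{1}^{-1}$. Combining these yields the identity
\begin{align*}
\s{3}\s{1}\s{3}^{-1}=\s{2}^{-1}\s{1}^{-1}\s{2},
\end{align*}
valid in every quotient of $\Gamma^+$. Thus $\s{3}$ always conjugates $\s{1}$ back into $\gen{\s{1},\s{2}}$, while $\s{3}\s{2}\s{3}^{-1}=\s{2}^{-1}\s{3}^{2}$; consequently $\s{3}$ normalises $\gen{\s{1},\s{2}}$ if and only if $\s{3}^{2}\in\gen{\s{1},\s{2}}$. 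If $H\cong A_4$, then $\s{3}^{2}$ (a square of an order-$4$ element, hence an even involution of $\gen{\s{2},\s{3}}$) lies in $A_4=H\leq\gen{\s{1},\s{2}}$, so $\s{3}$ normalises $\gen{\s{1},\s{2}}\cong A_5$. Since $\PSL q$ is simple and $\gen{\s{1},\s{2}}\neq\set{\epsilon}$, this forces $\gen{\s{1},\s{2}}=\gen{\s{1},\s{2},\s{3}}=\PSL q$, whence $\s{3}\in A_5$, contradicting $\order{\s{3}}=4$. (This is exactly the configuration realised by the natural copies $A_5\leq S_5\geq S_4$ inside $S_5\cong\PGL{5}$, which is why $q=5$ must be excluded.) Hence $H\not\cong A_4$.

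For $H\cong S_3$ I would proceed along the same lines, using the companion identity $\s{1}\s{2}\s{3}\s{1}\s{2}=\s{3}^{-1}$ read off from $(\s{1}\s{2}\s{3})^2=\epsilon$, which shows that the involution $\s{1}\s{2}\in\gen{\s{1},\s{2}}$ inverts $\s{3}$. Here $H\cong S_3$ means that the unique $S_3\leq S_4$ through $\gen{\s{2}}$ (a point stabiliser) coincides with $N_{A_5}(\gen{\s{2}})$, so that the involution inverting $\gen{\s{2}}$ lies in both parabolics; I then plan to convert this coincidence into an explicit word relation and feed it into a coset enumeration, exactly as in the proof of Lemma~\ref{lem:reduced_list}, to show that $\gen{\s{1},\s{2},\s{3}}$ collapses onto a group of order at most $120$, which cannot be a simple $\PSL q$ containing an element of order $4$ when $q\neq4,5$. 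This $S_3$ case is the main obstacle: unlike the $A_4$ case one has $\s{3}^{2}\notin\gen{\s{1},\s{2}}$, so $\s{3}$ does not normalise the copy of $A_5$ and the contradiction cannot be extracted from simplicity alone, which is what forces the more hands-on collapse argument (or an appeal to the regular case treated in~\cite{intersection}). Once both $H\cong S_3$ and $H\cong A_4$ are ruled out, we are left with $H\cong C_3=\gen{\s{2}}$, that is $\gen{\s{1},\s{2}}\cap\gen{\s{2},\s{3}}=\gen{\s{2}}$, so \eqref{C3'} holds.
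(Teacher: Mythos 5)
Your reduction to the single clause $\gen{\s{1},\s{2}}\cap\gen{\s{2},\s{3}}=\gen{\s{2}}$ and your list of candidates $C_3$, $S_3\cong D_6$, $A_4$ for the intersection $H$ match the paper exactly. Your treatment of the $A_4$ case is complete, correct, and genuinely different from the paper's: the identity $\s{3}\s{1}\s{3}^{-1}=\s{2}^{-1}\s{1}^{-1}\s{2}$ does follow from $(\s{2}\s{3})^2=(\s{1}\s{2}\s{3})^2=\epsilon$, and since the unique $A_4\leq S_4$ contains $\s{3}^2$, the hypothesis $H\cong A_4$ does force $\s{3}$ to normalise $\gen{\s{1},\s{2}}\cong A_5$, whence simplicity of $\PSL{q}$ gives the contradiction. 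The paper instead writes down three explicit order-three elements $\tau_1=\s{1}^2\s{2}^{-1}\s{1}$, $\tau_2=\s{1}\s{2}^{-1}\s{1}^2$, $\tau_3=\s{2}^2\s{3}^2$ and runs six coset enumerations; your argument is cleaner and computation-free for this case.

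However, there is a genuine gap: the $S_3$ case is only a plan, not a proof, and you say so yourself. You correctly observe that here $\s{3}^2\notin\gen{\s{1},\s{2}}$, so the normaliser-plus-simplicity trick is unavailable, but "convert this coincidence into an explicit word relation and feed it into a coset enumeration" is precisely the work that remains to be done, and it is not routine: one must actually produce the involutions in question and verify that every resulting quotient collapses. The paper does this by taking $\tau_1:=\s{1}\s{2}^2\s{1}^2\s{2}^2\s{1}^2$ (the involution of $N_{A_5}(\gen{\s{2}})$) and $\tau_2:=\s{3}\s{2}^2\s{3}^2$ (the involution of the unique $S_3\leq S_4$ over $\gen{\s{2}}$), noting that $H\cong D_6$ forces $\tau_2=\tau_1\s{2}^k$ for some $k\in\set{1,2,3}$, and checking that adding each of these three relations to the presentation of $\Gamma^+$ yields $A_5$ or the trivial group — which is excluded since $q\not=4,5$. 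Until you exhibit such words and verify the collapse (or correctly invoke the regular-case result of the intersection-condition paper and justify why it transfers to the chiral setting), the lemma is not established.
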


\begin{proof}
If $q=4$ or $5$, the third generator of $\Gamma^+$ is mapped to the identity element, so the intersection condition fails.
So we assume that $q\not=4,5$.
The only non trivial case to verify in (C3') is $\gen{\s{1},\s{2}}\cap\gen{\s{2},\s{3}}=\gen{\s{2}}$.
  The image of the subgroup $\gen{\s{1},\s{2}}$ is isomorphic to the icosahedral subgroup and the image of $\gen{\s{2},\s{3}}$ is isomorphic to $S_4$, so the intersection $\langle\s{1},\s{2}\rangle\cap\langle\s{2},\s{3}\rangle$ is one of $C_3$, $D_{6}$ or $A_4$.
  
  Suppose first that the intersection is isomorphic to a dihedral subgroup of order $6$.
  Let $\tau_1:=\s{1}\s{2}^2\s{1}^2\s{2}^2\s{1}^2$ and $\tau_2:=\s{3}\s{2}^2\s{3}^2$.
  It is an easy exercise to verify that $\tau_1$ and $\tau_2$ are involutions that transpose $\s{2}$ and $\s{2}^{-1}$.
  Since there is exactly one dihedral subgroup of order $6$, that contains a cyclic group of order $3$ in the icosahedral group and exactly one dihedral subgroup of order $6$, that contains a cyclic group of order $3$ in $S_4$, those two involutions must belong to the intersection.
  Hence $\tau_2=\tau_1\s{2}^k$ where $k\in\{1,2,3\}$.
  Adding each of those relations to $\Gamma^+$ gives respectively $A_5$ and twice the identity group.  
  Hence the intersection cannot be a dihedral subgroup of order $6$ since $q\not=4,5$.
  
  Suppose now that the intersection is isomorphic to $A_4$.  
  Let $\tau_1:=\s{1}^2\s{2}^{-1}\s{1}$, $\tau_2:=\s{1}\s{2}^{-1}\s{1}^2$ and $\tau_3:=\s{2}^2\s{3}^2$.
  It is an easy exercise to verify that, for $i=1,2,3$, $\tau_i\s{2}$ is an involution and $\tau_i$ is an element of order $3$.
  In $A_5$, there are exactly two distinct subgroups $A_4$ containing a cyclic subgroup of order $3$ whereas in $S_4$ there is exactly one $A_4$ containing a cyclic subgroup of order $3$.
  The three involutions of $A_4$ are all conjugate in $A_4$, hence we either have, for $i=1,2$, $\tau_i\s{2}=\tau_3\s{2}$, $(\tau_i\s{2})^{\s{2}}=\tau_3\s{2}$ or $(\tau_i\s{2})^{\s{2}^2}=\tau_3\s{2}$.
  Equivalently, we either have, for $i=1,2$
  \begin{align*}
  \tau_i=\tau_3, && \tau_i^{\s{2}}=\tau_3 && \text{or} && \tau_i^{\s{2}^2}=\tau_3.
  \end{align*}
  This gives six relations to consider.
  Adding each of them to the presentation of $\Gamma^+$ always gives a cyclic subgroup of order $2$.
  Hence the intersection cannot be $A_4$ or $D_6$ and it must be a cyclic subgroup of order $3$.  

\end{proof}

In order to count the number of pairwise non isomorphic chiral polytopes of type $[5,3,4]$, we consider first the homomorphisms $\Gamma_0^+\rightarrow\bar{L}$.
The following result has already been proved in \cite{JL2013} for example.

\begin{lem}[\cite{JL2013}]\label{lem:one_conjugacy_class_of_A5}
There is only one conjugacy class of icosahedral subgroups in $\bar{L}$.
\end{lem}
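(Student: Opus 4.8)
The plan is to reduce the statement to the rigidity of the two-dimensional representations of the binary icosahedral group, using crucially that the ground field is algebraically closed. First I would record that over $\GFb{p}$ every scalar is a square, so the determinant induces no nontrivial quotient and $\PSL{\GFb{p}}=\PGL{\GFb{p}}=\bar{L}$. Consequently it is enough to prove that any two icosahedral subgroups have $\GL{\GFb{p}}$-conjugate preimages (after passing to the projective quotient); $\PGL$-conjugacy and $\PSL$-conjugacy then coincide.

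Next I would lift to the linear group. Assume first that $p$ is odd, so that $\SL{\GFb{p}}$ has centre $\set{\pm I}$ of order $2$. Given an icosahedral subgroup $H\cong\Alt{5}\leq\bar{L}$, its preimage $\tilde{H}$ in $\SL{\GFb{p}}$ is a central extension of $\Alt{5}$ by $C_2$. Since $\Alt{5}$ has no faithful two-dimensional representation, this extension cannot split, so $\tilde{H}$ is the binary icosahedral group $2.\Alt{5}\cong\SL{5}$, and the inclusion $\tilde{H}\hookrightarrow\SL{\GFb{p}}$ is a faithful two-dimensional representation of $\SL{5}$. Thus the icosahedral subgroups of $\bar{L}$ are exactly the projective images of the faithful $2$-dimensional representations of $\SL{5}$, and it suffices to show that any two such representations have $\GL{\GFb{p}}$-conjugate images.

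Here I would invoke the character theory of $\SL{5}=2.\Alt{5}$. It has exactly two irreducible faithful two-dimensional representations $\rho_1,\rho_2$ (the two ``spin'' representations), defined over $\mathbb{Z}[\tfrac{1+\sqrt{5}}{2}]$ and interchanged by the field automorphism $\sqrt{5}\mapsto-\sqrt{5}$; equivalently, they are interchanged by the outer automorphism $\phi$ of $\Alt{5}$, lifted to $2.\Alt{5}$, which exchanges the two conjugacy classes of elements of order $5$ (on which the two representations take the distinguishing trace values $\tfrac{-1\pm\sqrt{5}}{2}$). Given two faithful two-dimensional representations, either they are isomorphic, and then their images are conjugate by definition, or one is isomorphic to $\rho_1$ and the other to $\rho_2$, in which case $\rho_2\cong\rho_1\circ\phi$; since $\rho_1\circ\phi$ has the same image as $\rho_1$ ($\phi$ being an automorphism of the group), the two images are again $\GL{\GFb{p}}$-conjugate. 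Combined with the first paragraph, all icosahedral subgroups are conjugate in $\bar{L}$.

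The main obstacle is the behaviour in small characteristic. One must check that the two spin representations remain irreducible, remain non-isomorphic, and remain swapped by $\phi$ after reduction modulo $p$; this requires the modular (Brauer character) representation theory of $2.\Alt{5}$ at the primes $p=2,3,5$ dividing $|2.\Alt{5}|=120$, and the case $p=2$ needs separate treatment since there $\SL{\GFb{2}}=\PSL{\GFb{2}}$ and the lifting argument of the second paragraph must be replaced by the observation that $\Alt{5}\cong\PSL{4}$ embeds directly. In each of these cases the embedding of $\Alt{5}$ into $\bar{L}$ still exists, and a direct verification — for instance via the forced triple of traces $(\Tr{X},\Tr{Y},\Tr{XY})$ of a pair of standard generators of orders $2$ and $3$ whose product has order $5$, which determines an irreducible pair up to conjugacy — confirms that the conjugacy class is unique.
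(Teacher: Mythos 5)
The paper does not prove this lemma at all: it is imported verbatim from Jones--Long \cite{JL2013}, where (as is classical) the argument runs through Dickson's classification --- any icosahedral subgroup of $\bar{L}$ is finite, hence lies in some $\PSL{p^e}$; Dickson's theorem says the icosahedral subgroups of $\PSL{p^e}$ form at most two conjugacy classes, fused in $\PGL{p^e}$; and $\PGL{p^e}\leq\PSL{p^{2e}}\leq\bar{L}$, so all classes coalesce in the limit. Your route is genuinely different and, in outline, correct: you trade Dickson's theorem for the $2$-dimensional representation theory of the binary icosahedral group. The reduction is sound --- over the algebraically closed field $\PGL{\GFb{p}}=\PSL{\GFb{p}}$; for odd $p$ the only involution of $\SL{\GFb{p}}$ is $-I$, so the preimage of an icosahedral subgroup is forced to be the nonsplit extension $2.\Alt{5}\cong\SL{5}$; and precomposing a representation with the lifted outer automorphism does not change its image, which is exactly what identifies the images of the two spin representations. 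What this buys is a self-contained proof not resting on Dickson; what it costs is the modular representation theory you defer to your final paragraph, and that paragraph is the one thin spot. At $p=3$ the two faithful $2$-dimensional Brauer characters do survive and are still swapped by the outer automorphism, and at $p=2$ the two natural modules of $\Alt{5}\cong\PSL{4}$ are swapped by the field automorphism of $\GF{4}$; but at $p=5$ your stated checklist (``remain non-isomorphic'') fails, since $\SL{5}$ in its defining characteristic has a \emph{unique} faithful $2$-dimensional irreducible, the natural module. This does not damage the conclusion --- one isomorphism class gives one conjugacy class even more easily --- but the invariant you should verify is ``any two faithful $2$-dimensional representations have conjugate images,'' not the persistence of two distinct spin representations. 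With that rephrasing, and with the three small-characteristic cases actually carried out rather than asserted, your argument is a complete and valid alternative proof.
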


Note that any non-trivial homomorphism $\Gamma_0^+\rightarrow\bar{L}$ is an isomorphism with $A_5$.
This is because $A_5$ is the only perfect non-trivial subgroup of itself.
Hence any non-trivial homomorphism $\Gamma^+_0\rightarrow\bar{L}$ is an embedding.
We define two embeddings $\psi_1$, $\psi_2:\Gamma_0^+\rightarrow\bar{L}$ to be {\em equivalent} if and only if there exists $x\in\bar{L}$ such that $\psi_1(\s{1})=\psi_2(\s{1})^x$ and $\psi_1(\s{2})=\psi_2(\s{2})^x$.

\begin{lem}[\cite{JL2013}]
There are exactly two equivalence classes of embeddings $\Gamma_0^+\rightarrow\bar{L}$.
\end{lem}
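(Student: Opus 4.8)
The plan is to reduce the count of equivalence classes to the computation of an index in $\Aut{A_5}\cong\Sym{5}$, and then to determine precisely which automorphisms of an icosahedral subgroup are induced by conjugation in $\bar{L}$. First I would fix one icosahedral subgroup $A\leq\bar{L}$ and a reference embedding $\psi_0\colon\Gamma_0^+\to\bar{L}$ with image $A$. By Lemma~\ref{lem:one_conjugacy_class_of_A5} every icosahedral subgroup of $\bar{L}$ is conjugate to $A$, and since every non-trivial homomorphism $\Gamma_0^+\to\bar{L}$ is an embedding onto such a subgroup, every embedding is equivalent to one whose image is exactly $A$. If two embeddings with image $A$ are conjugate by some $x\in\bar{L}$, then $x$ must normalise $A$; hence it suffices to count the orbits of $N:=\nor{\bar{L}}{A}$, acting by conjugation, on the embeddings with image $A$. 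Writing each such embedding uniquely as $\sigma\circ\psi_0$ with $\sigma\in\Aut{A_5}$ (using $A\cong A_5$), the conjugation action of $x\in N$ sends $\sigma$ to $(c_x|_A)\,\sigma$, that is, it is left multiplication by the image $\bar{N}$ of the homomorphism $N\to\Aut{A_5}$. Therefore the number of equivalence classes equals the index $[\Aut{A_5}:\bar{N}]$.

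Next I would locate $\bar{N}$. Since $A$ normalises itself and $A_5$ is centreless, $\bar{N}$ contains $\mathrm{Inn}(A)\cong A_5$; as $\Aut{A_5}\cong\Sym{5}$ and $[\Sym{5}:\Alt{5}]=2$, the only possibilities are $\bar{N}=\Alt{5}$, giving two classes, and $\bar{N}=\Sym{5}$, giving one. So the whole statement reduces to showing that the outer automorphism of $A$ is \emph{not} induced by any element of $\bar{L}$ normalising $A$.

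This last point is the crux of the argument, and the main obstacle. I would first note that $A$ fixes no point of $\PG{\GFb{p}}$, since otherwise it would lie in the soluble stabiliser of a point, contradicting the non-solubility of $A_5$; hence $A$ acts irreducibly on the natural two-dimensional module and, by Schur's lemma, $\cent{\bar{L}}{A}=1$, so $N\to\Aut{A_5}$ is injective. Recall that over the algebraically closed field $\GFb{p}$ one has $\PSL{\GFb{p}}=\PGL{\GFb{p}}=\bar{L}$, so conjugacy of a regular semisimple element is governed by its unordered pair of eigenvalue ratios. Every element of $A$ of order $5$ is such an element, and for a fixed one $g$ the ratios attached to $g$ and to $g^{2}$ are $\{\zeta,\zeta^{-1}\}$ and $\{\zeta^{2},\zeta^{-2}\}$, where $\zeta$ is a primitive fifth root of unity; for $p\neq5$ these pairs are distinct, so $g$ and $g^{2}$ lie in different $\bar{L}$-classes. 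The outer automorphism of $A_5$ carries $g$ into the $A$-class of $g^{2}$; were it realised by conjugation by some $x\in N$, then $g$ and $g^{2}$ would become $\bar{L}$-conjugate, a contradiction. Hence no element of $N$ induces the outer automorphism, so $\bar{N}=\Alt{5}$ and $[\Aut{A_5}:\bar{N}]=2$, giving exactly two equivalence classes, concretely distinguished by whether $\psi(\alpha)$ has eigenvalue ratio $\zeta$ or $\zeta^{2}$. The only genuine gap is the characteristic $p=5$, where the two classes of order-$5$ elements fuse in $\bar{L}$; but in the present context $q\equiv3\pmod 4$ forces $p\neq5$, so the conclusion stands.
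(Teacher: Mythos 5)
The paper does not actually prove this lemma; it is imported directly from Jones and Long \cite{JL2013}, so there is no internal argument to measure yours against. Your proof is correct and makes the paper self-contained at this point. The reduction to the index $[\Aut{A_5}:\bar N]$, where $\bar N$ is the image of $\nor{\bar{L}}{A}$ in $\Aut{A_5}$ for a fixed icosahedral subgroup $A$, is legitimate: Lemma~\ref{lem:one_conjugacy_class_of_A5} lets you assume every embedding has image $A$, and two embeddings with the same image $A$ can only be conjugate by an element normalising $A$. The crux --- that no element of $\bar{L}$ induces the outer automorphism of $A$ --- is handled by exactly the right invariant: the outer automorphism fuses the two $A$-classes of elements of order five, while these lie in distinct $\bar{L}$-classes because their eigenvalue ratios are $\{\zeta,\zeta^{-1}\}$ and $\{\zeta^{2},\zeta^{-2}\}$ for a primitive fifth root of unity $\zeta$. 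This is in substance the trace argument of \cite{JL2013} (their two classes are separated by the traces $t_1,t_2=(-1\pm\sqrt{5})/2$ of the image of $\s{1}$, which is the same invariant), and it is also the invariant the paper uses immediately afterwards to show the two explicit embeddings $\psi_1,\psi_2$ are inequivalent; note, however, that the explicit matrices only give the lower bound of two classes, whereas your normaliser computation supplies the matching upper bound. Your caveat about $p=5$, where order-five elements become unipotent and the two classes fuse, is a genuine boundary case that the bare statement glosses over; as you observe, $q\equiv3\pmod{4}$ forces $p\neq 5$ (and $p\neq 2$), so nothing is lost where the lemma is applied.
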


This equivalence relation allows us the restrict our attention only to the extensions of those two equivalence classes of embeddings $\Gamma_0^+\rightarrow\bar{L}$.
We clarify this statement in the following lemma:

\begin{lem}\label{lem:equiv_quotients}
Let $\psi_1,\psi_2:\Gamma_0^+\rightarrow\bar{L}$ be two equivalent homomorphisms.
If $\theta_2$ is an extension of $\psi_2$, then there exists an extension $\theta_1$ of $\psi_1$ such that the quotients $\bar{L}/\ker{\theta_1}$ and $\bar{L}/\ker{\theta_2}$ are equivalent, in the sense that there exists $x\in\bar{L}$ such that $\theta_1(\s{1})=\theta_2(\s{1})^x$, $\theta_1(\s{2})=\theta_2(\s{2})^x$ and $\theta_1(\s{3})=\theta_2(\s{3})^x$.
\end{lem}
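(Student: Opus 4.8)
The plan is to build $\theta_1$ explicitly by transporting $\theta_2$ through the element $x$ that witnesses the equivalence of $\psi_1$ and $\psi_2$. First I would fix $x\in\bar{L}$ with $\psi_1(\s{1})=\psi_2(\s{1})^x$ and $\psi_1(\s{2})=\psi_2(\s{2})^x$, which exists precisely because $\psi_1$ and $\psi_2$ are equivalent. I then define $\theta_1\colon\Gamma^+\rightarrow\bar{L}$ by $\theta_1(g):=\theta_2(g)^x$ for every $g\in\Gamma^+$; that is, $\theta_1:=c_x\circ\theta_2$, where $c_x$ denotes conjugation by $x$.

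The key point is that $c_x$ is an inner automorphism of $\bar{L}$, so $\theta_1$, being the composite of the homomorphism $\theta_2$ with the automorphism $c_x$, is automatically a homomorphism; no defining relation of $\Gamma^+$ has to be re-verified by hand. Next I would check that $\theta_1$ restricts to $\psi_1$ on $\Gamma_0^+=\gen{\s{1},\s{2}}$: since $\theta_2$ extends $\psi_2$, we get $\theta_1(\s{1})=\theta_2(\s{1})^x=\psi_2(\s{1})^x=\psi_1(\s{1})$, and identically $\theta_1(\s{2})=\psi_1(\s{2})$. Thus $\theta_1$ is an extension of $\psi_1$, as required. The claimed equivalence of the two quotients is then immediate, because by construction $\theta_1(\s{i})=\theta_2(\s{i})^x$ holds simultaneously for $i=1,2,3$ with the single element $x$; indeed $\ker\theta_1=\ker\theta_2$ since $c_x$ is injective, so the two images are not merely equivalent but conjugate in $\bar{L}$.

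I do not anticipate a genuine obstacle: the argument is entirely formal. The only step meriting an explicit word is the well-definedness of $\theta_1$, and this is where I would lean on the remark that composing $\theta_2$ with an inner automorphism preserves every relation of $\Gamma^+$ for free, so that $\theta_1$ is a homomorphism without any relation-by-relation computation. Everything else reduces to unwinding the definitions of \emph{equivalent homomorphisms} and \emph{extension}.
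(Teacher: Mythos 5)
Your proof is correct and takes essentially the same approach as the paper: the paper likewise sets $\theta_1(\gamma):=\theta_2(\gamma)^x$ while keeping $\theta_1=\psi_1$ on $\langle\alpha,\beta\rangle$, which is exactly your $\theta_1=c_x\circ\theta_2$ on all three generators. Your explicit remark that composing with the inner automorphism $c_x$ makes $\theta_1$ a homomorphism without re-checking relations simply spells out what the paper dismisses with ``is of course an extension.''
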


\begin{proof}
Since $\psi_1$ and $\psi_2$ are equivalent, there exists $x\in\bar{L}$ such that $\psi_1(\s{1})=\psi_2(\s{1})^x$ and $\psi_1(\s{2})=\psi_2(\s{2})^x$.
The homomorphism $\theta_1:\Gamma_0^+\rightarrow\bar{L}$ defined by $\theta_1(\s{1})=\psi_1(\s{1})$, $\theta_1(\s{2})=\psi_1(\s{2})$ and $\theta_1(\s{3})=\theta_2(\s{3})^x$ is of course an extension of $\psi_1$ equivalent to $\theta_2$.
\end{proof}
Clearly, two equivalent quotients (satisfying conditions \eqref{C1} to \eqref{C3}) induce two isomorphic chiral polytopes.

Let $F=\mathbb{F}_p$ or $\mathbb{F}_{p^2}$ according as $p\equiv\pm1\pmod{5}$ and $p\equiv\pm1\pmod{8}$ or not.
  That is $F=\mathbb{F}_p$ if $p\equiv\pm1,\pm9\pmod{40}$ and $F=\mathbb{F}_{p^2}$ if $p\equiv\pm3,\pm7,\pm11,\pm13,\pm17,\pm19\pmod{40}$, so that $F$ is the smallest subfield of $\overline{F}_p$ containing a square root of $5$ and a square root of $2$, or equivalently for which $\PSL{F}$ contains elements of order $5$ and $4$.
  Let $t_1=\frac{-1+\sqrt{5}}{2}$ and $t_2=\frac{-1-\sqrt{5}}{2}$.
   For each $i=1,2$, it is well known that there exist two elements $\scal{1}_i,\scal{2}_i\in F$ such that $\scal{1}_i^2+\scal{2}_i^2=t_i^2-3$.
  Note that, for $i=1,2$, we cannot have $\scal{1}_i=\scal{2}_i=0$ since this gives $t_i=-2$ and so $4=t_i^2=1-t_i=3$, which is impossible.
  Without loss of generality, we may assume $\scal{1}_i\not=0$.

\begin{lem}
For $i=1,2$, the two embeddings $\psi_i:\Gamma^+_0\rightarrow\bar{L}$ defined by:
\begin{align*}
  \s{1}\mapsto\frac{1}{2}\left[\begin{matrix}t_i-\scal{2}_i&\scal{1}_i+1\\\scal{1}_i-1&t_i+\scal{2}_i\end{matrix}\right],&&\s{2}\mapsto\frac{1}{2}\left[\begin{matrix}\scal{1}_i+1&t_i+\scal{2}_i\\\scal{2}_i-t_i&1-\scal{1}_i\end{matrix}\right].
  \end{align*}
are not equivalent.
\end{lem}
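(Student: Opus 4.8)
\emph{Proof proposal.} The plan is to separate the two embeddings by a single conjugacy invariant, namely the trace of the image of the order-$5$ generator $\s{1}$. Since equivalence of $\psi_1$ and $\psi_2$ requires one element $x\in\bar{L}$ to conjugate $\psi_2(\s{1})$ to $\psi_1(\s{1})$ and $\psi_2(\s{2})$ to $\psi_1(\s{2})$ simultaneously, it already fails if $\psi_1(\s{1})$ and $\psi_2(\s{1})$ are not conjugate in $\bar{L}$; so I would only need to compare these two matrices. (The order-$3$ generator $\s{2}$ will not help to distinguish them, since $\Tr{\psi_i(\s{2})}=\tfrac12\bigl((\scal{1}_i+1)+(1-\scal{1}_i)\bigr)=1$ for both $i$.)

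First I would record two direct computations with the given matrices. Using the defining relation $\scal{1}_i^2+\scal{2}_i^2=t_i^2-3$ one gets
\[
\Det{\psi_i(\s{1})}=\tfrac14\bigl(t_i^2-\scal{2}_i^2-(\scal{1}_i^2-1)\bigr)=\tfrac14\bigl(t_i^2-(t_i^2-3)+1\bigr)=1,
\]
so each $\psi_i(\s{1})$ has determinant $1$ and is a genuine $\SL{\GFb{p}}$-lift of the corresponding element of $\bar{L}=\PSL{\GFb{p}}$. Taking the trace of the displayed matrix gives $\Tr{\psi_i(\s{1})}=\tfrac12\bigl((t_i-\scal{2}_i)+(t_i+\scal{2}_i)\bigr)=t_i$.

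Next I would use that trace up to sign is a conjugacy invariant in $\bar{L}$. Concretely, if $\psi_1$ and $\psi_2$ were equivalent there would be $x\in\bar{L}$ with $\psi_1(\s{1})=\psi_2(\s{1})^x$; lifting $x$ to $X\in\SL{\GFb{p}}$ and comparing determinants forces $\psi_1(\s{1})=\pm X\,\psi_2(\s{1})\,X^{-1}$ as matrices, whence $t_1=\Tr{\psi_1(\s{1})}=\pm\Tr{\psi_2(\s{1})}=\pm t_2$. It then remains to rule this out: since $t_1-t_2=\sqrt5$ and $t_1+t_2=-1$, neither the difference nor the sum vanishes in $F$ as long as $p\neq5$, which holds throughout (here $p$ is odd and $p\equiv3\pmod4$, so $p\neq5$). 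Hence $t_1\neq\pm t_2$, contradicting $t_1=\pm t_2$, and the two embeddings are inequivalent.

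The one delicate point, and the step I would treat most carefully, is the ``up to sign'' ambiguity forced by working in the projective group $\bar{L}$ rather than with fixed matrix representatives: one must check that this sign does not accidentally identify $t_1$ with $t_2$, which is exactly why both $t_1-t_2$ and $t_1+t_2$ must be shown nonzero, and why the hypothesis $5\neq0$ in $F$ is essential. Everything else is a short matrix computation, so I do not anticipate any further obstacle.
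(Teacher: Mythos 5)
Your proof is correct and takes essentially the same route as the paper, whose entire argument is the one-line observation that $\psi_i(\s{1})$ has trace $t_i$ and that traces are invariant under conjugation. Your additional care with the sign ambiguity in $\bar{L}$ --- checking that neither $t_1-t_2=\sqrt{5}$ nor $t_1+t_2=-1$ vanishes, so that $t_1\neq\pm t_2$ --- addresses a point the paper glosses over, and you handle it correctly.
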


\begin{proof}
It is easy to see that $\psi_i((\s{1}\s{2})^2)=1_{\bar{L}}$ hence $\psi_i$ is well defined.
Moreover, they are not equivalent since for $i=1,2$, $\psi_i(\s{1})$ has trace $t_i$ and the traces are invariant under conjugation.
\end{proof}

Note that we could have taken $t_1=-\frac{-1+\sqrt{5}}{2}$ and $t_2=-\frac{-1-\sqrt{5}}{2}$, but those give equivalent embeddings as those described above.

\begin{thm} \label{lem:quotients}
The only chiral polytopes of type $[5,3,4]$ with $\PSL{p}$ as automorphism group are given by the following list:
\begin{enumerate}
\item if $p\equiv1,9\pmod{40}$ and $1\pm\sqrt{5}$ are both squares in $\GF{p}$, then there are four chiral polytopes with automorphism group isomorphic to $\PSL{p}$.
\item if $p=31,39\pmod{40}$, then there are two chiral polytopes with automorphism group isomorphic to $\PSL{p}$.
\end{enumerate}
\end{thm}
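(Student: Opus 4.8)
The plan is to count, up to the equivalence of Lemma~\ref{lem:equiv_quotients}, the chiral polytopes of type $[5,3,4]$ with automorphism group $\PSL{p}$, by analyzing how the two equivalence classes of embeddings $\psi_i:\Gamma_0^+\rightarrow\bar{L}$ extend to homomorphisms $\theta:\Gamma^+\rightarrow\bar{L}$. First I would fix one of the two embeddings $\psi_i$ (with $\s{1},\s{2}$ given by the explicit matrices above, so $\s{1}\s{2}$ is a fixed involution) and determine all matrices $\s{3}$ of order $4$ satisfying the defining relations of $\Gamma^+$, namely $(\s{2}\s{3})^2=1$ and $(\s{1}\s{2}\s{3})^2=1$. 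These two involution conditions translate into trace equations $\Tr{\s{2}\s{3}}=0$ and $\Tr{\s{1}\s{2}\s{3}}=0$, together with $\Tr{\s{3}}=0$ (to force order dividing $4$, with $\s{3}^2\neq1$ giving exactly order $4$). Since $\det\s{3}=1$ and $\Tr{\s{3}}=0$, the matrix $\s{3}$ has three free entries constrained by three linear/quadratic conditions, so I expect a small finite set of candidate solutions $\s{3}$ for each $\psi_i$.

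**Solving for the third generator and identifying the field condition.**

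Next I would solve the resulting system explicitly. Writing $\s{3}=\left[\begin{smallmatrix}w&x\\y&z\end{smallmatrix}\right]$ with $w+z=0$ and $wz-xy=1$, the two linear trace conditions $\Tr{\s{2}\s{3}}=0$ and $\Tr{\s{1}\s{2}\s{3}}=0$ pin down $x,y$ (or a linear combination) in terms of $w$, reducing everything to a single quadratic equation in $w$ over $F$. The discriminant of this quadratic is where the arithmetic conditions of the statement enter: the requirement that a genuine solution exist in $\GF{p}$ (rather than only in a quadratic extension, which would place the quotient in $\PSL{p^2}$ and not $\PSL{p}$) should reduce precisely to the condition that $1\pm\sqrt5$ be squares in $\GF{p}$ in the case $p\equiv1,9\pmod{40}$. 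The split into the two cases of the theorem mirrors whether $F=\GF{p}$ or $F=\GF{p^2}$: when $p\equiv31,39\pmod{40}$ one has $\sqrt5\in\GF{p}$ but $\sqrt2\notin\GF{p}$, and the relevant square roots behave differently, yielding two polytopes rather than four.

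**Counting per embedding and assembling the total.**

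For each embedding $\psi_i$ that admits a valid $\s{3}$ in $\GF{p}$, I would argue that the solutions for $\s{3}$ come in a controlled number (I expect two per embedding, distinguished by a choice of sign/square-root branch), and that each genuinely lands in $\PSL{p}$ and not a proper subgroup: here I invoke Lemma~\ref{lem:perfect} to guarantee the image is $\PSL{q}$ or $\PGL{q}$, Lemma~\ref{lem:one_conjugacy_class_of_A5} and Lemma~\ref{lem:equiv_quotients} to reduce to the two embedding classes without overcounting conjugates, and Lemma~\ref{lem:intersection_534} to confirm the intersection condition \eqref{C3'} so that the quotient really is a chiral polytope. Since $q=p\equiv3\pmod4$, Lemma~\ref{lem:PGLinPSL} forces the image to be $\PSL{p}$ rather than $\PGL{p}$. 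Combining the contributions of $\psi_1$ and $\psi_2$ gives $4$ polytopes when both $1\pm\sqrt5$ are squares, and $2$ when the weaker congruence holds. Finally I must rule out chirality failing, i.e.\ check that no involutory automorphism $\alpha$ of the type in Schulte's characterization exists; this follows because an order-preserving automorphism swapping $\s{1}\leftrightarrow\s{1}^{-1}$ would have to fix the trace $t_i$ while conjugation cannot realize it, separating the chiral quotients from regular ones.

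**Anticipated main obstacle.**

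The hard part will be the explicit trace computation and, more delicately, translating the solvability of the quadratic in $w$ into the exact quadratic-residue conditions ``$1\pm\sqrt5$ are squares'' and the congruences modulo $40$. Getting the bookkeeping right—ensuring each embedding contributes exactly the claimed number of inequivalent $\s{3}$, that duals are correctly identified with $[4,3,5]$, and that no spurious solutions in the quadratic extension are miscounted as living in $\PSL{p}$—is where the care is needed, since an off-by-a-factor-of-two error would change $4$ to $2$ or vice versa.
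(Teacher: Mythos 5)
Your overall strategy coincides with the paper's: fix the two inequivalent embeddings $\psi_i$ of $\Gamma_0^+=[5,3]^+$, solve for a third generator $\s{3}$ subject to the involution relations, reduce to a quadratic whose discriminant involves $1\pm\sqrt5$, and count solutions per embedding. However, there is a concrete error in your setup that would derail the computation. You impose $\Tr{\s{3}}=0$ ``to force order dividing $4$''. For $q$ odd, a matrix $A\in\SL{q}$ with $\Tr{A}=0$ satisfies $A^2=-I$, so its image in $\PSL{q}$ is an \emph{involution}; you would be constructing quotients of type $[5,3,2]$, not $[5,3,4]$. An element of order $4$ in $\PSL{q}$ lifts to an element of order $8$ in $\SL{q}$ and has trace $\pm\sqrt2$; this is exactly the condition $w+z=\sqrt2$ used in the paper, and it is the source of the requirement $p\equiv\pm1\pmod 8$ that combines with $p\equiv\pm1\pmod 5$ to produce the congruences modulo $40$.

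Relatedly, your explanation of the split between the two cases is wrong. For $p\equiv31,39\pmod{40}$ one has $p\equiv-1\pmod 8$ and $p\equiv\pm1\pmod 5$, so \emph{both} $\sqrt2$ and $\sqrt5$ lie in $\GF{p}$ (indeed $F=\GF{p}$ in both cases of the theorem); it is not true that $\sqrt2\notin\GF{p}$ there. The actual mechanism is the quadratic character of $-1$: the two discriminants satisfy $\Delta_1\Delta_2=-(2\scal{1}_1\scal{1}_2)^2$, so when $p\equiv1\pmod4$ (i.e.\ $p\equiv1,9\pmod{40}$) they are both squares or both non-squares, giving four quotients onto $\PSL{p}$ or none, and the condition that both $1\pm\sqrt5$ be squares is needed; when $p\equiv3\pmod4$ (i.e.\ $p\equiv31,39\pmod{40}$) exactly one of $\Delta_1,\Delta_2$ is a square, yielding exactly two quotients onto $\PSL{p}$ unconditionally. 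Your counting framework, the appeal to Lemmas \ref{lem:equiv_quotients} and \ref{lem:intersection_534}, and the concluding chirality check are all in the right spirit, but without the correct trace condition and the $\Delta_1\Delta_2$ identity the claimed counts cannot be derived.
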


\begin{proof}
  The mapping defined by
  \begin{align*}
  \s{1}\mapsto\psi_i(\s{1})&&\s{2}\mapsto\psi_i(\s{2})&&\s{3}\rightarrow\left[\begin{matrix}w&x\\y&z\end{matrix}\right],     
  \end{align*}
  where $wz-xy=1$ and $w+z=\sqrt{2}$, is a homomorphism if and only if it preserves the relations $\s{3}^5=(\s{2}\s{3})^2=(\s{1}\s{2}\s{3})^2=1$.
  A similar calculation as in~\cite{JL2013} shows that the solutions in $\overline{\mathbb{F}}_p$ of the quadratic equation
 \begin{align}\label{eq:quad}
 (\scal{1}_i^2+\scal{2}_i^2)x^2+\sqrt{2}\scal{2}_ix+\frac{1}{2}(\scal{1}_i^2+1)=0
 \end{align}
 in $x$ are in one-to-one correspondence with the extensions $\Gamma^+\rightarrow\bar{L}$ that preserve the order of $\s{1}$, $\s{2}$ and $\s{3}$.
Recall that $F=\mathbb{F}_p$ if $p\equiv\pm1,\pm9\pmod{40}$ and $F=\mathbb{F}_{p^2}$ if $p\equiv\pm3,\pm7,\pm11,\pm13,\pm17,\pm19\pmod{40}$.
For $i=1,2$, the determinant $\Delta_i$ of \eqref{eq:quad} is given by
\begin{align*}
    \Delta_i&=2\scal{2}_i^2-2(\scal{1}_i^2+\scal{2}_i^2)(\scal{1}_i^2+1)\\
    &=2\scal{2}_i^2-2\scal{1}_i^4-2\scal{1}_i^2-2\scal{1}_i^2\scal{2}_i^2-2\scal{2}_i^2\\
	&=\scal{1}_i^2(-2(\scal{1}_i^2+\scal{2}_i^2)-2)\\
	&=\scal{1}_i^2(2(t_i+2)-2)\\
    &=\scal{1}_i^2(1\pm\sqrt{5}).
  \end{align*}
  Since $\scal{1}_i\not=0$, the determinant $\Delta_i$ is a square in $F$ if and only if $1\pm\sqrt{5}$ is a square in the same field.
  Since $\Delta_i=0$ implies $t_i=-1$ and $1=t_i^2=1-t_i=2$, we may assume $\Delta_i\not=0$.
  We now have $\Delta_1\Delta_2=-4(a_1a_2)^2=-(2a_1a_2)^2$.
  
  Suppose first that $-1$ is a square, equivalently $p\equiv1,9\pmod{40}$ and $F=\GF{p}$ or $p\equiv\pm3,\pm7,\pm9,\pm11,\pm13,\pm17,\pm19\pmod{40}$ and $F=\GF{p^2}$.
  In the latter case, the images of the epimorphisms are $\PGL{p}$ or $\PSL{p^{2n}}$ so we may assume $p\equiv1,9\pmod{40}$.
  If both $\Delta_1$ and $\Delta_2$ are square in $F$, then there are four epimorphisms onto $\PSL{p}$.
  Since only the identity of $\Aut{\PSL{p}}\cong\PGL{p}$ preserves an icosahedral subgroup of $\PSL{p}$ and the trace of the image of $\s{1}$ is preserved, the four epimorphisms give four non equivalent quotients isomorphic to $\PSL{p}$.
  If both $\Delta_1$ and $\Delta_2$ are non squares in $\GF{p}$, then $\Delta_1$ and $\Delta_2$ are squares in $\GF{p^2}$ and we obtain epimorphisms onto $\PGL{p}$ or $\PSL{p^2}$.
  Suppose now that $-1$ is not a square, equivalently $p\equiv31,39\pmod{40}$ and $F=\GF{p}$.
  Then exactly one of $\Delta_1$ and $\Delta_2$ is a square (say $\Delta_1$).
  The two epimorphisms corresponding to $\Delta_1$ have their image in $\PSL{p}$.
  By a previous argument, we obtain two non equivalent quotients isomorphic to $\PSL{p}$.
  The two epimorphisms corresponding to $\Delta_2$ are onto $\PSL{p^2}$ or $\PGL{p}$.
  Hence the result follows from Lemma \ref{lem:equiv_quotients}.
  
\end{proof}

\begin{thm}\label{thm:534}
Let $q\equiv3\pmod{4}$ be a prime power.
Then $\PSL{q}$ is the automorphism group of a chiral $4$-polytope with type $[5,3,4]$ if and only if $q=p\equiv31,39\pmod{40}$.
\end{thm}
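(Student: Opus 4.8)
The statement is a biconditional, and the backward implication will be essentially a restatement of Theorem~\ref{lem:quotients}: if $q=p\equiv31,39\pmod{40}$, then case~(2) of that theorem already exhibits two chiral $4$-polytopes of type $[5,3,4]$ with automorphism group $\PSL p$, so $\PSL q$ is such an automorphism group and nothing more is needed. The work therefore lies in the forward implication, where I would assume that $\PSL q$, with $q=p^d\equiv3\pmod4$, is the automorphism group of a chiral $4$-polytope of type $[5,3,4]$ and deduce that $q=p\equiv31,39\pmod{40}$. The plan is to first reduce to the case $q=p$ and then read off the congruence from Theorem~\ref{lem:quotients}.

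To reduce to $q=p$, I would begin with the parity observation that $q\equiv3\pmod4$ forces $d$ odd, since an odd prime power $p^d$ with $d$ even is a perfect square and hence congruent to $1\pmod4$. Next I would bound the field of definition. The polytope yields an order-preserving epimorphism $\theta\colon\Gamma^+\to\PSL q\hookrightarrow\bar L$, where order preservation follows from Lemma~\ref{lem:perfect} (note $q>5$, as $q=3$ gives $\PSL 3\cong A_4$, which has no element of order $5$). Its restriction to $\Gamma_0^+=[5,3]^+$ is an embedding onto an icosahedral subgroup of $\bar L$; by Lemma~\ref{lem:one_conjugacy_class_of_A5} together with the two-equivalence-class statement preceding Theorem~\ref{lem:quotients}, after conjugation I may take $\theta|_{\Gamma_0^+}=\psi_i$ for some $i\in\{1,2\}$, whose matrix entries lie in $F\in\{\GF p,\GF{p^2}\}$. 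The image $\theta(\s{3})$ of the order-$4$ generator is then a root of the quadratic~\eqref{eq:quad}, so its entries lie in $F$ or in the quadratic extension of $F$; hence $\theta(\Gamma^+)\leq\PSL{p^m}$ with $m\in\{1,2,4\}$. Since $\PSL q=\theta(\Gamma^+)\leq\PSL{p^m}$ forces $d\mid m$, we get $d\in\{1,2,4\}$, and combining this with $d$ odd yields $d=1$, that is $q=p$. (For $d$ an odd prime power one could alternatively invoke Lemma~\ref{lem:subsg} directly, but the field-of-definition argument covers all odd $d$ at once, including the non-prime-power case.)

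With $q=p\equiv3\pmod4$ in hand, I would conclude by appealing to Theorem~\ref{lem:quotients}: a chiral $4$-polytope of type $[5,3,4]$ with automorphism group $\PSL p$ exists only in its case~(1), where $p\equiv1,9\pmod{40}$, or its case~(2), where $p\equiv31,39\pmod{40}$. Case~(1) forces $p\equiv1\pmod4$, which is incompatible with $p\equiv3\pmod4$, so only case~(2) can occur and $p\equiv31,39\pmod{40}$, as required. I expect the only genuinely delicate point to be the field-of-definition step: one must be certain that every such polytope arises, up to conjugacy and the equivalence of embeddings, from one of the explicit $\psi_i$ together with a root of~\eqref{eq:quad}, so that the bound $m\in\{1,2,4\}$ is valid for \emph{every} $d$ and not merely for odd prime powers. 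The remainder is congruence bookkeeping and a direct appeal to the classification already established in Theorem~\ref{lem:quotients}.
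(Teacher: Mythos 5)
Your forward implication is sound, and your field-of-definition reduction to $q=p$ is in some ways more self-contained than the paper's (which leans on Lemma~\ref{lem:subsg} and Lemma~\ref{lem:reduced_list}): confining the entries of the image of $\s{3}$ via the quadratic \eqref{eq:quad} places the image inside $\PSL{p^m}$ with $m$ a power of $2$, and oddness of $d$ then forces $d=1$ for every odd $d$ at once, not just for $d$ a prime power. The congruence bookkeeping at the end is also correct, since the forward direction only needs the ``only if'' half of the quotient classification.

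The genuine gap is in the backward implication, which you dismiss as ``essentially a restatement of Theorem~\ref{lem:quotients}'' with ``nothing more needed''. Although that theorem is worded in terms of chiral polytopes, its proof only counts the non-equivalent epimorphisms $\Gamma^+\rightarrow\PSL{p}$, i.e.\ generating triples satisfying \eqref{C1} and \eqref{C2}; the paper's own proof of the present theorem even closes with the sentence that the quotients given by Theorem~\ref{lem:quotients} ``are indeed chiral polytopes'', which is exactly the step you skip. Two verifications are required, and they constitute essentially all of the paper's argument here. First, the intersection property \eqref{C3'} must be checked for these quotients; this is Lemma~\ref{lem:intersection_534}, which rules out $\gen{\s{1},\s{2}}\cap\gen{\s{2},\s{3}}$ being $D_6$ or $A_4$ by adjoining candidate relations to the presentation of $\Gamma^+$. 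Second, and more importantly, one must show that the resulting polytope is genuinely chiral rather than directly regular, i.e.\ that there is no involutory automorphism $\alpha$ of $\PSL{q}$ with $\alpha(\s{1})=\s{1}^{-1}$, $\alpha(\s{2})=\s{1}^2\s{2}$ and $\alpha(\s{3})=\s{3}$. The paper disposes of this by cases: an outer such automorphism would make $\PGL{q}$ or $\Sigma(2,q)$ the automorphism group of a regular $4$-polytope, which is excluded by Theorem~\ref{RegRank4}; an inner one would make $\PSL{q}$ itself the automorphism group of a regular $4$-polytope, which by the classification in \cite{LS2005a} happens only for $q\in\set{11,19}$, and then with types $[3,5,3]$ and $[5,3,5]$ rather than $[5,3,4]$. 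Without this step your argument only produces smooth quotients of $[5,3,4]^+$, not chiral polytopes.
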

\begin{proof}
By Lemma \ref{lem:intersection_534} the quotients obtained in Theorem \ref{lem:quotients} satisfy the intersection condition \eqref{C3}.
By Lemma \ref{lem:reduced_list} and Theorem \ref{lem:quotients}, if $\gen{\s{1},\s{2},\s{3}}\cong\PSL{q}$ is the automorphism group of a chiral $4$-polytope, then $q=p\equiv31,39\pmod{40}$.
Suppose there exists an involutory automorphism inverting $\s{1}$, mapping $\s{2}$ onto $\s{1}^2\s{2}$ and fixing $\s{3}$.
If this automorphism is an outer automorphism, then the full automorphism group is $\PGL{q}$ or $\Sigma(2,q)$.
By Theorem \ref{RegRank4}, $\PGL{q}$ is the automorphism group of a regular polytope if and only if $q=5$ and $\Sigma(2,q)$ is not the automorphism group of a regular polytope since $q=p$.
If the automorphism is an inner automorphism, then the full automorphism group is isomorphic to the direct pro\-duct $\PSL{q}\times C_2$, and $\PSL{q}$ is the automorphism group of a (non directly) regular 4-polytope.
By \cite{LS2005a}, $\PSL{q}$ is the automorphism group of a regular polytope if and only if $q\in\set{11,19}$.
For $q=11$ the Sch\"afli type is [3,5,3] and for $q=19$ it is [5,3,5].
Hence the quotients given by Theorem \ref{lem:quotients} are indeed chiral polytopes.
\end{proof}

This gives point \eqref{3139mod40} of Theorem \ref{maintheo}.

\subsubsection{Polytopes of type $[5,3,5]$}

  The quotients isomorphic to $\PSL{q}$ of the even subgroups $\Gamma^+$ of the Coxeter group $\Gamma=[5,3,5]$ have been classified by~\cite{JL2013}.
  The result is summarised in the following theorem.
  \begin{thm}\label{thm:Jones535}
  The only quotients of the subgroup $\Gamma^+$, isomorphic to $\PSL{q}$ are given in the following list.
  \begin{enumerate}
    \item for $p=2$, there are two quotients isomorphic to $\PSL{4}$ and one isomorphic to $\PSL{16}$;
    \item for $p=5$, there are two quotients isomorphic to $\PSL{5}\cong\PSL{4}$;
    \item for $p=19$, there are three quotients isomorphic to $\PSL{19}$;
    \item for $p\equiv\pm1\pmod{5}$ with $p\equiv1,4,5,6,7,9,11,16,17\pmod{19}$, there are either four quotients isomorphic to $\PSL{p}$ if $(7\pm5\sqrt{5})/2$ are both squares in $\mathbb{F}_p$ or two quotients isomorphic to $\PSL{p^2}$ if $(7\pm5\sqrt{5})/2$ are both non squares in $\mathbb{F}_p$;
    \item for $p\equiv\pm1\pmod{5}$ with $p\equiv2,3,8,10,12,13,14,15,18\pmod{19}$, there are two quotients isomorphic to $\PSL{p}$ and one quotient isomorphic to $\PSL{p^2}$;
    \item for $p\equiv2\pmod{5}$, there are either two quotients isomorphic to $\PSL{p^2}$ if $(7\pm5\sqrt{5})/2$ are both squares in $\GF{p^2}$ or one quotient isomorphic to $\PSL{p^4}$ if $(7\pm5\sqrt{5})/2$ are both non squares in $\GF{p^2}$.
  \end{enumerate}
  \end{thm}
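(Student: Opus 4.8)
This result is established in \cite{JL2013}; the plan below indicates how it can be recovered by the method we used for Theorem \ref{lem:quotients}, the new feature being that now \emph{both} parabolic subgroups $\gen{\s{1},\s{2}}\cong[5,3]^+$ and $\gen{\s{2},\s{3}}\cong[3,5]^+$ are icosahedral. First I would check that $\Gamma^+=[5,3,5]^+$ is perfect: abelianising, the relation $(\s{1}\s{2})^2=1$ together with $\s{2}^3=1$ forces $2\s{1}=\s{2}$, so $6\s{1}=0$; with $5\s{1}=0$ this gives $\s{1}=0$, then $\s{2}=2\s{1}=0$, and finally $(\s{2}\s{3})^2=1$ gives $2\s{3}=0$, hence $\s{3}=0$. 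By Lemma \ref{lem:abquot}, every nontrivial quotient is therefore perfect, and the argument of Lemma \ref{lem:perfect} shows that any homomorphism $\theta:\Gamma^+\to\bar{L}$ preserving the order of $\s{3}$ is an epimorphism onto some $\PSL{q}$ with $q=p^d$. Crucially, because $\ab{\Gamma^+}$ is trivial, the image can never be a $\PGL{q}$; this is why the list consists solely of projective special linear groups.

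Next I would reduce, via Lemmas \ref{lem:one_conjugacy_class_of_A5} and \ref{lem:equiv_quotients}, to extending a fixed embedding of $\Gamma_0^+:=\gen{\s{1},\s{2}}$. As in the $[5,3,4]$ case there are exactly two equivalence classes of such embeddings, distinguished by the trace $t_i=\frac{-1\pm\sqrt5}{2}$ of the image of $\s{1}$. For a fixed class I would then search for the admissible images of $\s{3}$, namely matrices $\left[\begin{smallmatrix}w&x\\y&z\end{smallmatrix}\right]$ of determinant $1$ whose trace $w+z=t_j$ is that of an order-$5$ element and which satisfy the remaining relations $(\s{2}\s{3})^2=(\s{1}\s{2}\s{3})^2=1$. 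Exactly as in Theorem \ref{lem:quotients}, these relations collapse to a single quadratic equation in one entry of $\s{3}$, whose roots in $\GFb{p}$ are in bijection with the order-preserving extensions $\Gamma^+\to\bar{L}$.

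The count is then governed by the discriminant of that quadratic. A computation parallel to the one in Theorem \ref{lem:quotients} shows it equals $\frac{7\pm5\sqrt5}{2}$ up to a nonzero square, so it is a square precisely when $\frac{7\pm5\sqrt5}{2}$ is. The field $\GF{p}$ contains $\sqrt5$ if and only if $p\equiv\pm1\pmod5$, which explains the split between cases (4)--(5) and case (6). Moreover the two discriminants (one per trace $t_j$) multiply to $\frac{7+5\sqrt5}{2}\cdot\frac{7-5\sqrt5}{2}=\frac{49-125}{4}=-19$, so whether they are simultaneously squares, simultaneously non-squares, or of opposite type is controlled by the quadratic character of $-19$; this is the source of the mod-$19$ congruences distinguishing (4) from (5). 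When a discriminant is a non-square in $\GF{p}$ it becomes a square in $\GF{p^2}$ (or, when $\sqrt5\notin\GF{p}$, only in $\GF{p^4}$), and the corresponding quotient is $\PSL{p^2}$ or $\PSL{p^4}$, yielding the larger fields in (4)--(6). The small values $p=2,5,19$ I would treat by hand: for $p=5$ and $p=2$ the target degenerates ($\PSL5\cong\PSL4\cong A_5$ and the order-$5$ constraint collapses), while $p=19$ is the borderline case where $\sqrt5\in\GF{19}$ but the product $-19$ vanishes, so one discriminant is zero and gives a repeated root, producing three quotients instead of four.

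The step I expect to be most delicate is the final equivalence bookkeeping. Because $\gen{\s{2},\s{3}}$ must \emph{itself} be icosahedral, the trace $t_j$ of $\s{3}$ is forced into $\set{t_1,t_2}$, so the two choices for $\s{1}$ and the two for $\s{3}$ interact and can produce up to four extensions that must be sorted into genuine isomorphism classes of polytopes. Since an automorphism of $\PSL{p}$ fixing an icosahedral subgroup is pinned down by its action on traces, the pair $(t_i,t_j)$ together with the square class of the discriminant is exactly the invariant that distinguishes inequivalent quotients; getting this bookkeeping right --- and recognising which extensions coincide under the self-duality $\s{1}\leftrightarrow\s{3}^{-1}$ of the self-dual type $[5,3,5]$ --- is the crux carried out in \cite{JL2013}.
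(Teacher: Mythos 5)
Your proposal matches the paper exactly where it matters: the paper offers no proof of this theorem at all, stating only that the classification ``has been classified by~\cite{JL2013}'' and recording the result, so your attribution to \cite{JL2013} is precisely what the paper does. The additional sketch you supply goes beyond the paper but is sound as far as it goes --- the perfectness computation for $[5,3,5]^+$ is correct (and correctly explains why no $\PGL{q}$ quotients appear), the product of the two discriminants $\frac{7+5\sqrt5}{2}\cdot\frac{7-5\sqrt5}{2}=-19$ does govern the split between cases (4) and (5) (the residues $1,4,5,6,7,9,11,16,17$ are exactly the quadratic residues mod $19$), and your reading of $p=19$ as the degenerate case with one vanishing discriminant yields the three quotients of case (3).
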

 
  
  \begin{lem}\label{lem:inter535}
  If $q\not=4,5,19$, every quotient of $\Gamma^+$ isomorphic to $\PSL{q}$ satisfies the intersection property.
  \end{lem}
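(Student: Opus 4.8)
The plan is to follow the strategy of Lemma~\ref{lem:intersection_534} very closely, the only structural difference being that here \emph{both} parabolic subgroups are icosahedral rather than one of them being $\Sym{4}$. First I would note that the first two equalities in \eqref{C3'} are automatic: since $\order{\s{1}}=\order{\s{3}}=5$ and $\order{\s{2}}=3$ are coprime, $\gen{\s{1}}\cap\gen{\s{2}}$ and $\gen{\s{2}}\cap\gen{\s{3}}$ are trivial. Because the quotient is $\PSL{q}$ with $q\geq4$, the subgroup $\gen{\s{1},\s{2}}$ (a quotient of $\Alt{5}=[5,3]^+$) is nontrivial, hence isomorphic to $\Alt{5}$, and likewise $\gen{\s{2},\s{3}}\cong[3,5]^+\cong\Alt{5}$. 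Thus the intersection $\gen{\s{1},\s{2}}\cap\gen{\s{2},\s{3}}$ is a subgroup of $\Alt{5}$ containing $\gen{\s{2}}\cong\C{3}$, so it is one of $\C{3}$, $\D{6}$, $\Alt{4}$ or $\Alt{5}$. The case $\Alt{5}$ is immediate: it forces $\gen{\s{1},\s{2}}=\gen{\s{2},\s{3}}$ to be the whole group $\Alt{5}\cong\PSL{4}\cong\PSL{5}$, which is excluded by hypothesis. So the work reduces to eliminating $\D{6}$ and $\Alt{4}$ when $q\neq19$.

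To rule out $\D{6}$, I would reuse the involution $\tau_1:=\s{1}\s{2}^2\s{1}^2\s{2}^2\s{1}^2\in\gen{\s{1},\s{2}}$ from Lemma~\ref{lem:intersection_534}, together with an analogous involution $\tau_2\in\gen{\s{2},\s{3}}$ inverting $\s{2}$ (for instance the image of $\tau_1$ under the duality $\s{1}\leftrightarrow\s{3}$ of $[5,3,5]^+$). The key group-theoretic fact is that $\nor{\Alt{5}}{\C{3}}\cong\D{6}$ is the \emph{unique} subgroup of $\Alt{5}$ of order $6$ containing a given $\C{3}$. Hence if the intersection is a $\D{6}$ it must coincide with this unique normaliser inside each parabolic, so both $\tau_1$ and $\tau_2$ lie in it; since two involutions of $\D{6}$ inverting $\s{2}$ differ by an element of $\cent{\D{6}}{\s{2}}=\gen{\s{2}}$, we obtain $\tau_2=\tau_1\s{2}^k$ for some $k\in\set{0,1,2}$. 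I would then adjoin each of these three relations to the presentation of $[5,3,5]^+$ and, using the L2-quotient algorithm~\cite{J2015}, compute all $\PSL{q}$ quotients of the resulting groups, expecting to find only $q\in\set{4,5,19}$.

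For the $\Alt{4}$ case the bookkeeping is heavier because a fixed $\C{3}$ lies in exactly \emph{two} subgroups $\Alt{4}$ of $\Alt{5}$ (by counting: $\Alt{5}$ has five $\Alt{4}$'s and ten $\C{3}$'s, and each $\Alt{4}$ contains four $\C{3}$'s). Mirroring Lemma~\ref{lem:intersection_534}, I would exhibit order-$3$ elements $\tau_1,\tau_2\in\gen{\s{1},\s{2}}$ and $\tau_3,\tau_4\in\gen{\s{2},\s{3}}$, one attached to each of these four subgroups $\Alt{4}$, each satisfying that $\tau_i\s{2}$ is an involution. If the intersection is an $\Alt{4}$, it equals one of the two $\Alt{4}$'s in each parabolic, so one of $\tau_1,\tau_2$ and one of $\tau_3,\tau_4$ generate the same $\Alt{4}$ together with $\s{2}$; since the three involutions of $\Alt{4}$ are permuted cyclically by $\s{2}$, this yields relations of the form $\tau_i=\tau_j^{\s{2}^m}$ with $i\in\set{1,2}$, $j\in\set{3,4}$ and $m\in\set{0,1,2}$. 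Again I would adjoin each relation in turn and check via~\cite{J2015} that the only $\PSL{q}$ quotients have $q\in\set{4,5,19}$.

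The main obstacle is this final, somewhat delicate computation rather than the group theory: everything hinges on verifying that adjoining each collapse relation produces a group whose projective special linear quotients are confined to $\PSL{4}$, $\PSL{5}$ and $\PSL{19}$. The appearance of $\PSL{19}$ is exactly what obliges us to exclude $q=19$ from the statement — it signals that one of these relations is compatible with a $\PSL{19}$ quotient, so that (consistent with Theorem~\ref{thm:Jones535}(3)) one of the three quotients $\PSL{19}$ of $[5,3,5]^+$ has an oversized intersection and fails~\eqref{C3'}. I would finish by confirming that for every admissible $q\neq4,5,19$ none of these relations survives, which, combined with the elimination of $\Alt{5}$, forces the intersection down to $\C{3}=\gen{\s{2}}$ and establishes~\eqref{C3'}.
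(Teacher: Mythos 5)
Your proposal follows essentially the same route as the paper's proof: reduce to the third condition of \eqref{C3'}, observe the intersection must be one of $\C{3}$, $\D{6}$ or $\Alt{4}$ (the paper leaves the $\Alt{5}$ case implicit), exhibit the same words $\tau_i$ inverting or normalising $\gen{\s{2}}$, use the uniqueness of the $\D{6}$ (resp.\ the two $\Alt{4}$'s) containing a given $\C{3}$ in $\Alt{5}$ to derive the collapse relations, and check computationally that adjoining them yields only $\Alt{5}$, the trivial group, or $\PSL{19}$. The argument and the role of the exclusion $q\neq 19$ match the paper exactly.
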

  
  \begin{proof}
  The only case to verify is $\gen{\s{1},\s{2}}\cap\gen{\s{2},\s{3}}=\gen{\s{2}}$.
  The image of the subgroups $\gen{\s{1},\s{2}}$ and $\gen{\s{2},\s{3}}$ are both isomorphic to the icosahedral subgroup, so the intersection $\langle\s{1},\s{2}\rangle\cap\langle\s{2},\s{3}\rangle$ is one of $C_3$, $D_6$ or $A_4$.
  Suppose first that the intersection is isomorphic to the dihedral subgroup of order $6$.
  Let $\tau_1:=\s{1}\s{2}^2\s{1}^2\s{2}^2\s{1}^2$ and $\tau_2:=\s{3}\s{2}^2\s{3}^2\s{2}^2\s{3}^2$.
  It is an easy exercise to verify that $\tau_1$ and $\tau_2$ are involutions and permute $\s{2}$ and $\s{2}^{-1}$.
  Since there is only one dihedral subgroup of order $6$ in the icosahedral group that contains a cyclic group of order $3$, those two involutions must belong to the intersection.
  Hence $\tau_2=\tau_1\s{2}^k$ where $k\in\{1,2,3\}$.
  Adding each of those relations to $\Gamma^+$ gives respectively $\PSL{19}$, $A_5\cong\PSL{4}\cong \PSL{5}$ and $A_5\cong\PSL{4}\cong\PSL{5}$.  

  Suppose now that the intersection is isomorphic to $A_4$.  
  Let $\tau_1:=\s{1}^2\s{2}^{-1}\s{1}$, $\tau_2:=\s{1}\s{2}^{-1}\s{1}^2$, $\tau_3:=\s{3}^2\s{2}^{-1}\s{3}$ and $\tau_4:=\s{3}\s{2}^{-1}\s{3}^2$.
  It is an easy exercise to verify that, for $i=1,2$, $\tau_i\s{2}$ and, for $i=3,4$, $\tau_i\s{2}$ are involutions and $\scal{2}_i$ and $\tau_i$ are elements of order three.
  In $A_5$, there are exactly two distinct subgroups $A_4$ containing a cyclic subgroup of order $3$.
  The three involutions of $A_4$ are all conjugate, hence we either have $\scal{2}_i\s{2}=\tau_j\s{2}$, $(\tau_i\s{2})^{\s{2}}=\tau_j\s{2}$ or $(\tau_i\s{2})^{\s{2}^2}=\tau_j\s{2}$, where $i=1,2$ and $j=3,4$.
  Equivalently, we either have
  \begin{align*}
  \tau_i=\tau_j, && \tau_i^{\s{2}}=\tau_j && \text{or} && \tau_i^{\s{2}^2}=\tau_j,
  \end{align*}
  where $i=1,2$ and $j=3,4$.
  This gives $12$ relations to consider.
  Adding each of them to the presentation of $\Gamma^+$ always gives the identity subgroup.
  Hence the intersection cannot be $A_4$ or $D_6$ and it must be a cyclic subgroup of order $3$.  
  \end{proof}
  
  \begin{thm}    \label{thm:535}
  Let $q\equiv3\pmod{4}$ be a prime power.
  The only chiral $4$-polytopes of type $[5,3,5]$ with automorphism group isomorphic to $\PSL{q}$ are given by the following list:
  \begin{enumerate}
  \item if $q=p=19$, there are two chiral $4$-polytopes with automorphism group isomorphic to $\PSL{19}$;
    \item if $q=p\equiv\pm1\pmod{5}$, $p\equiv1,4,5,6,7,9,11,16,17\pmod{19}$ and $(7\pm5\sqrt{5})/2$ are both squares in $\mathbb{F}_p$, then there are four chiral $4$-polytopes of type $[5,3,5]$ with automorphism group isomorphic to $\PSL{p}$;
    \item if $q=p\equiv\pm1\pmod{5}$, $p\equiv2,3,8,10,12,13,14,15,18\pmod{19}$, then there are two chiral $4$-polytopes of type $[5,3,5]$ with automorphism group isomorphic to $\PSL{p}$.
  \end{enumerate}
  \end{thm}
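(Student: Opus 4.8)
The plan is to mirror the proof of Theorem~\ref{thm:534}, using Jones' classification (Theorem~\ref{thm:Jones535}) of the quotients of $\Gamma^+=[5,3,5]^+$ in place of the $[5,3,4]^+$ analysis. First I would take the list of quotients isomorphic to $\PSL{q}$ from Theorem~\ref{thm:Jones535} and retain only those with $q\equiv3\pmod4$. Since such a $q$ is an odd, non-square power of $p$, while the only exponents occurring in Theorem~\ref{thm:Jones535} are $1$, $2$ and $4$, only $q=p$ survives; in particular every quotient onto $\PSL{p^2}$ or $\PSL{p^4}$ is discarded. The surviving families are exactly parts (3), (4) and (5) of Theorem~\ref{thm:Jones535} (the prime $p=19$ being excluded from (4) and (5) by the congruence modulo $19$), and they produce three, four and two quotients, matching items (1), (2) and (3) of the statement.

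Next I would establish the intersection property \eqref{C3'}. For every surviving value $q\neq19$ this is exactly Lemma~\ref{lem:inter535}, so items (2) and (3) are handled at once. The prime $q=19$ is precisely the case that Lemma~\ref{lem:inter535} excludes, so there the condition $\gen{\s{1},\s{2}}\cap\gen{\s{2},\s{3}}=\gen{\s{2}}$ must be checked directly (for instance by a short {\sc Magma} computation); this will confirm that the three $\PSL{19}$-quotients relevant to item (1) all satisfy \eqref{C3'}.

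Finally I would distinguish the chiral quotients from the rotation subgroups of regular polytopes. A quotient $\Gamma^+\to\PSL{p}$ is the rotation subgroup of a regular polytope, rather than the automorphism group of a chiral one, exactly when there is an involutory $\alpha\in\Aut{\PSL{p}}\cong\PGL{p}$ with $\alpha(\s{1})=\s{1}^{-1}$, $\alpha(\s{2})=\s{1}^2\s{2}$ and $\alpha(\s{3})=\s{3}$. If $\alpha$ is outer, the extended group is $\PGL{p}$ or $\Sigma(2,p)$; by Theorem~\ref{RegRank4} the former is a regular rank-$4$ group only for $p=5$ (excluded by $p\equiv3\pmod4$) and the latter never is, since $p$ is not a square. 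If $\alpha$ is inner, then $\PSL{p}$ itself is the automorphism group of a regular rank-$4$ polytope, which by \cite{LS2005a} happens only for $p\in\set{11,19}$, with types $[3,5,3]$ and $[5,3,5]$ respectively; as our type is $[5,3,5]$, only $p=19$ qualifies. Consequently, for every retained $p\neq19$ all the quotients are chiral, giving the four polytopes of item (2) and the two of item (3); whereas for $p=19$ exactly one of the three quotients is the rotation subgroup of the regular $[5,3,5]$-polytope of $\PSL{19}$ furnished by Theorem~\ref{RegRank4}, and the remaining two are chiral, giving item (1).

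The main obstacle is the exceptional prime $p=19$. It is simultaneously the only value outside the scope of Lemma~\ref{lem:inter535} and the only one for which a genuine regular $[5,3,5]$-polytope with group $\PSL{q}$ exists, so both the intersection property for the two chiral quotients and the identification of the third quotient as the regular rotation subgroup have to be settled separately from the generic argument, rather than being read off from the counts in Theorem~\ref{thm:Jones535}.
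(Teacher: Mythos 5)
Your proposal follows essentially the same route as the paper, whose proof of this theorem simply says ``similar to that of Theorem~\ref{thm:534}, except that there are two chiral polytopes and one regular polytope of type $[5,3,5]$ for $\PSL{19}$'': filter Theorem~\ref{thm:Jones535} to $q=p$, invoke Lemma~\ref{lem:inter535}, and rule out regularity via Theorem~\ref{RegRank4} and \cite{LS2005a}. If anything, you are more careful than the paper in flagging that $q=19$ sits outside the hypotheses of Lemma~\ref{lem:inter535} and that the intersection property for the two chiral $\PSL{19}$-quotients therefore needs a separate (e.g.\ computational) check, a point the paper leaves implicit.
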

  
  \begin{proof}
  Similar to that of Theorem \ref{thm:534} except that there are two chiral polytopes and one regular polytope of type $[5,3,5]$ isomorphic to $\PSL{19}$.  Note that the regular $[5,3,5]$ is the one described in \cite{LS2008a}.
  \end{proof}
  
  This gives points \eqref{1119mod20_3} and \eqref{1119mod20_4} of Theorem \ref{maintheo}.
  
\subsubsection{Polytopes of type $[3,5,3]$}\label{sec:353}

  The quotients isomorphic to $\PSL{q}$ of the even subgroups $\Gamma^+$ of the Coxeter group $\Gamma=[3,5,3]$ have been classified by~\cite{JLM2011}.
  The result is summarised in the following theorem.
  \begin{thm}\label{thm:Jones353}
  The only quotients of the subgroup $\Gamma^+$, isomorphic to $\PSL{p}$ are given in the following list.
  \begin{enumerate}
    \item for $p=2$, there is one quotient isomorphic to $\PSL{2^4}$;
    \item for $p=5$, there is one quotient isomorphic to $\PSL{5^2}$;
    \item for $p=11$, there is one quotient isomorphic to $\PSL{11}$ and one quotient isomorphic to $\PSL{11^2}$;
    \item for $p\equiv\pm1\pmod{5}$ with $p\equiv1,3,4,5,9\pmod{11}$, there are either four quotients isomorphic to $\PSL{p}$ if $3\pm2\sqrt{5}$ are both squares in $\mathbb{F}_p$ or two quotients isomorphic to $\PSL{p^2}$ if $3\pm2\sqrt{5}$ are both non squares in $\mathbb{F}_p$;
    \item for $p\equiv\pm1\pmod{5}$ with $p\equiv2,6,7,8,10\pmod{11}$, there are two quotients isomorphic to $\PSL{p}$ and one quotient isomorphic to $\PSL{p^2}$;
    \item for $2>p\equiv2\pmod{5}$, there are either two quotients isomorphic to $\PSL{p^2}$ if $3\pm2\sqrt{5}$ are both squares in $\GF{p^2}$ or one quotient isomorphic to $\PSL{p^4}$ if $3\pm2\sqrt{5}$ are both non squares in $\GF{p^2}$.
  \end{enumerate}
  \end{thm}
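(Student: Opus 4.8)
The plan is to adapt, almost verbatim, the explicit-matrix argument carried out for the type $[5,3,4]$ in the proof of Theorem \ref{lem:quotients}, exploiting the fact that here too the rotation subgroup $\Gamma_0^+=\gen{\s{1},\s{2}}=[3,5]^+$ is the icosahedral group $A_5$. By Lemma \ref{lem:one_conjugacy_class_of_A5} and the lemmas following it, every non-trivial homomorphism $\Gamma_0^+\rightarrow\bar{L}$ is an embedding and there are exactly two equivalence classes, distinguished by the trace $t_i=\frac{-1\pm\sqrt{5}}{2}$ of the image of the order-$5$ generator $\s{2}$. First I would write down matrix representatives $\psi_i$ of these two classes over the smallest field $F$ (equal to $\GF{p}$ when $p\equiv\pm1\pmod{5}$ and $\GF{p^2}$ otherwise) that contains $\sqrt{5}$, exactly as in the $[5,3,4]$ case but with the roles of $\s{1}$ and $\s{2}$ interchanged so that $\s{2}$ has order $5$ and $\s{1}$ has order $3$.

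Next I would extend each $\psi_i$ to $\Gamma^+$ by setting $\s{3}\mapsto\left[\begin{smallmatrix}w&x\\y&z\end{smallmatrix}\right]$ with $wz-xy=1$ and the trace $w+z$ fixed to the value ($\pm1$) that forces $\s{3}$ to have order $3$ in $\PSL{q}$. Imposing the defining relations $\s{3}^3=(\s{2}\s{3})^2=(\s{1}\s{2}\s{3})^2=1$ reduces, after the same elimination that produced \eqref{eq:quad}, to a single quadratic equation in one coordinate of $\s{3}$, whose roots in $\GFb{p}$ are in one-to-one correspondence with the order-preserving extensions $\Gamma^+\rightarrow\bar{L}$ (the order preservation being guaranteed, as before, by Lemma \ref{lem:perfect}). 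The core of the computation is the discriminant $\Delta_i$ of this quadratic; by analogy with the $[5,3,4]$ and $[5,3,5]$ computations I expect it to collapse to a nonzero scalar multiple of $3\pm2\sqrt{5}$, matching the quantities in the statement, with the degenerate root $\Delta_i=0$ excluded exactly as in Theorem \ref{lem:quotients}.

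The counting then follows the template of Theorems \ref{lem:quotients} and \ref{thm:535}. The key arithmetic identity is $(3+2\sqrt{5})(3-2\sqrt{5})=9-20=-11$, so whether $\Delta_1$ and $\Delta_2$ are simultaneously squares, simultaneously non-squares, or of opposite type in $F$ is governed by the residue of $-11$, that is by $p\bmod{11}$ together with $p\bmod{5}$ (the latter deciding whether $\sqrt{5}\in\GF{p}$); this is precisely why the residue classes $1,3,4,5,9$ (the squares mod $11$) and $2,6,7,8,10$ (the non-squares) appear. When both discriminants are squares in $\GF{p}$ one obtains four extensions with image in $\PSL{p}$, and since only the identity of $\Aut{\PSL{p}}\cong\PGL{p}$ fixes an icosahedral subgroup while preserving $\Tr{\psi_i(\s{2})}$, these yield four inequivalent quotients; Lemma \ref{lem:equiv_quotients} then ensures equivalent embeddings give isomorphic objects. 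The remaining rows of the list (both non-squares giving image $\PSL{p^2}$; opposite types giving two $\PSL{p}$ and one $\PSL{p^2}$; the $\PSL{p^4}$ case when $p\equiv2\pmod{5}$ forces $F=\GF{p^2}$ and both discriminants are non-squares there; and the $p=2,5,11$ exceptions) are read off the same discriminant dichotomy.

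The main obstacle I anticipate is not the algebra but the bookkeeping that pins down the exact field $\GF{p^{d}}$ generated and separates $\PSL{q}$ images from $\PGL{q}$ ones: for each root of the quadratic one must track whether the resulting $\s{3}$ has all entries in $F$ or only in its quadratic extension, and whether the full triple $\gen{\s{1},\s{2},\s{3}}$ generates $\PSL{p}$, $\PSL{p^2}$ or $\PSL{p^4}$. This is exactly the point at which the square/non-square behaviour of $\Delta_i$ must be combined with the parity arguments of Lemmas \ref{lem:PGLinPSL} and \ref{lem:PSLinPSL}, and it is the step most prone to miscounting; carrying it out carefully is what turns the discriminant computation into the precise enumeration asserted in the theorem.
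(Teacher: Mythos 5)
The first thing to note is that the paper does not actually prove this statement: Theorem \ref{thm:Jones353} is imported from the Jones--Long--Mednykh classification \cite{JLM2011} of epimorphisms from $[3,5,3]^+$ onto projective special linear groups, and the surrounding text makes clear it is being quoted, not derived. Your outline is therefore competing with the cited argument rather than with one in this paper; and the strategy you describe (two trace-classes of embeddings of the icosahedral subgroup $\gen{\s{1},\s{2}}$, extension to a $\s{3}$ of order $3$ via a trace-$\pm1$ matrix of determinant $1$, reduction of the relations $(\s{2}\s{3})^2=(\s{1}\s{2}\s{3})^2=1$ to a single quadratic, and a discriminant/field-of-definition analysis) is indeed the method of \cite{JLM2011} and \cite{JL2013}, and it is the same method this paper executes in full for $[5,3,4]$ in Section \ref{sec:534}. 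Your observation that $(3+2\sqrt{5})(3-2\sqrt{5})=-11$, combined with quadratic reciprocity, is exactly the right explanation for why the residue classes $1,3,4,5,9$ versus $2,6,7,8,10$ modulo $11$ appear in the statement.

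As a proof, however, the proposal has a genuine gap: everything that makes the theorem true is deferred. You only \emph{expect} the discriminant to collapse to a nonzero multiple of $3\pm2\sqrt{5}$, but that computation is the entire content of the result, and the field bookkeeping you yourself flag as ``the step most prone to miscounting'' is precisely what separates the $\PSL{p}$, $\PSL{p^2}$ and $\PSL{p^4}$ rows. One concrete point is wrong as stated: you claim the degenerate root $\Delta_i=0$ is excluded ``exactly as in Theorem \ref{lem:quotients}''. It is not. Since the product of the two discriminants is $-11$ times a square, at $p=11$ one of them vanishes (with $\sqrt{5}=4$ one gets $3+2\sqrt{5}=0$ in $\GF{11}$), the quadratic acquires a double root, and this is exactly what produces item (3) of the list: one quotient over $\PSL{11}$ from the repeated root and one over $\PSL{11^2}$ from the other discriminant, which is a non-square. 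The $p=2$ and $p=5$ exceptions likewise need separate treatment rather than being ``read off''. So the plan is the right plan, but it remains a plan; to stand as a proof it would need the explicit quadratic, its discriminant, and the case analysis actually carried out, as in \cite{JLM2011}.
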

 
  
  \begin{lem}\label{lem:inter353}
  If $q\not=9,11$, then every quotient of $\Gamma^+$ isomorphic to $\PSL{q}$ satisfies the intersection property.
  \end{lem}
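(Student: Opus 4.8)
The plan is to follow the strategy of the proof of Lemma~\ref{lem:inter535}, with the roles of the orders $3$ and $5$ interchanged. For the type $[3,5,3]$ the generators satisfy $\s{1}^3=\s{2}^5=\s{3}^3=1$, so $\gen{\s{1}}\cong\C{3}$, $\gen{\s{2}}\cong\C{5}$ and $\gen{\s{3}}\cong\C{3}$; in the intersection condition~\eqref{C3'} the two equalities $\gen{\s{1}}\cap\gen{\s{2}}=\set{\epsilon}$ and $\gen{\s{2}}\cap\gen{\s{3}}=\set{\epsilon}$ hold automatically because the orders are coprime, so the only thing to check is $\gen{\s{1},\s{2}}\cap\gen{\s{2},\s{3}}=\gen{\s{2}}$. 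The images of $\gen{\s{1},\s{2}}=[3,5]^+$ and $\gen{\s{2},\s{3}}=[5,3]^+$ are icosahedral subgroups isomorphic to $\Alt{5}$ (this uses $q\neq4,5$, which holds since by Theorem~\ref{thm:Jones353} every such quotient has $q\geq9$). The intersection contains $\gen{\s{2}}\cong\C{5}$ and is a subgroup of $\Alt{5}$, hence is one of $\C{5}$, $\D{10}$ or $\Alt{5}$, these being the only subgroups of $\Alt{5}$ containing a fixed Sylow $5$-subgroup. If it were $\Alt{5}$ then $\gen{\s{1},\s{2},\s{3}}=\gen{\s{1},\s{2}}\cong\Alt{5}$, forcing $q\in\set{4,5}$, which is excluded; so it remains to rule out $\D{10}$, and this is where the real work lies.

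For the $\D{10}$ case I would first produce explicit words $\tau_1\in\gen{\s{1},\s{2}}$ and $\tau_2\in\gen{\s{2},\s{3}}$ that are involutions inverting $\s{2}$, i.e. that lie in the dihedral normaliser $\D{10}$ of $\gen{\s{2}}$ inside each copy of $\Alt{5}$. These are found by the same kind of short search inside $[3,5]^+$ and $[5,3]^+$ that produced $\tau_1=\s{1}\s{2}^2\s{1}^2\s{2}^2\s{1}^2$ and $\tau_2=\s{3}\s{2}^2\s{3}^2\s{2}^2\s{3}^2$ in Lemma~\ref{lem:inter535}, although the precise words will differ because the orders are swapped. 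Since a fixed $\C{5}$ is contained in a unique $\D{10}$ of $\Alt{5}$, if the intersection were $\D{10}$ then both $\tau_1$ and $\tau_2$ would belong to it; as each inverts $\s{2}$, the product $\tau_1\tau_2$ centralises $\s{2}$, hence lies in $C_{\Alt{5}}(\s{2})=\gen{\s{2}}$, so that $\tau_2=\tau_1\s{2}^k$ for some $k\in\set{0,1,2,3,4}$. I would then adjoin each of these five relations in turn to the presentation of $\Gamma^+$ and identify the resulting finitely presented quotients by coset enumeration.

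The main obstacle is precisely this last identification: one must verify that, among the five groups obtained, the only ones isomorphic to a projective linear group are $\PSL{9}$ and $\PSL{11}$, the remaining relations collapsing the group to something too small. Granting this, the intersection cannot equal $\D{10}$ once $q\neq9,11$, so it must equal $\gen{\s{2}}\cong\C{5}$, and the intersection property~\eqref{C3} holds. Checking that the chosen $\tau_1,\tau_2$ are indeed involutions inverting $\s{2}$ is a routine computation in $\Alt{5}$; the delicate point, exactly as in the $[5,3,5]$ case, is pinning down which quotients the relations $\tau_2=\tau_1\s{2}^k$ produce, and it is this computation that isolates the two exceptional values $q=9$ and $q=11$.
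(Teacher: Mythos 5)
Your proposal is correct and follows essentially the same route as the paper's proof: reduce to the single condition $\gen{\s{1},\s{2}}\cap\gen{\s{2},\s{3}}=\gen{\s{2}}$, rule out $A_5$, and eliminate the $D_{10}$ case by placing two involutions $\tau_1,\tau_2$ inverting $\s{2}$ in the unique $D_{10}$ above $\gen{\s{2}}$, deducing $\tau_2=\tau_1\s{2}^k$, and enumerating the five resulting quotients (which the paper finds to be $\PSL{11}$, the trivial group, $\PSL{9}$, $\PSL{9}$ and the trivial group, exactly isolating the exceptions $q=9,11$). The only minor discrepancy is your expectation that the words must differ from the $[5,3,5]$ case: the paper uses the very same words $\tau_1=\s{1}\s{2}^2\s{1}^2\s{2}^2\s{1}^2$ and $\tau_2=\s{3}\s{2}^2\s{3}^2\s{2}^2\s{3}^2$.
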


  \begin{proof}
  The only case to verify is $\gen{\s{1},\s{2}}\cap\gen{\s{2},\s{3}}=\gen{\s{2}}$.
  The image of the subgroups $\gen{\s{1},\s{2}}$ and $\gen{\s{2},\s{3}}$ are both isomorphic to the icosahedral subgroup, so the intersection $\langle\s{1},\s{2}\rangle\cap\langle\s{2},\s{3}\rangle$ is $C_5$ or $D_{10}$.
  Suppose that the intersection is isomorphic to the dihedral subgroup of order $10$.
  
  Let $\tau_1:=\s{1}\s{2}^2\s{1}^2\s{2}^2\s{1}^2$ and $\tau_2:=\s{3}\s{2}^2\s{3}^2\s{2}^2\s{3}^2$.
  It is an easy exercise to ve\-ri\-fy that $\tau_1$ and $\tau_2$ are involutions and permute $\s{2}$ and $\s{2}^{-1}$.
  Since there is only on dihedral subgroup of order $10$ in the icosahedral group that contains a cyclic group of order $5$, those two involutions must belong to the intersection.
  Hence $\tau_2=\tau_1\s{2}^k$ where $k\in\{1,2,3,4,5\}$.
  Adding each of those relations to $\Gamma^+$ gives respectively $\PSL{11}$, the identity group, $\PSL{9}$, $\PSL{9}$ and the identity group.  
  Hence the intersection cannot be a dihedral subgroup of order $10$ and it must be a cyclic subgroup of order $5$.  
  \end{proof}
  
  \begin{thm}   \label{thm:353} 
  Let $q\equiv3\pmod{4}$.
  The only chiral $4$-polytopes of type $[3,5,3]$ with automorphism group isomorphic to $\PSL{p}$ are given by the following list:
  \begin{enumerate}
    \item if $q=p\equiv\pm1\pmod{5}$, $p\equiv1,3,4,5,9\pmod{11}$ and $3\pm2\sqrt{5}$ are both squares in $\mathbb{F}_p$, there are four chiral $4$-polytopes of type $[3,5,3]$ with automorphism group isomorphic to $\PSL{p}$;
    \item if $q=p\equiv\pm1\pmod{5}$ and $p\equiv2,6,7,8,10\pmod{11}$, there are two chiral $4$-polytopes of type $[3,5,3]$ with automorphism group isomorphic to $\PSL{p}$;
  \end{enumerate}
  \end{thm}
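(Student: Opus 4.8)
The plan is to mirror the proof of Theorem~\ref{thm:534}, with the classification in Theorem~\ref{thm:Jones353} playing the role that Theorem~\ref{lem:quotients} played there. First I would note that the standing hypothesis $q\equiv3\pmod{4}$, combined with Lemma~\ref{lem:subsg}, forces $q=p$ to be prime, so the only relevant objects are the quotients of $\Gamma^+=[3,5,3]^+$ isomorphic to $\PSL{p}$. Reading off Theorem~\ref{thm:Jones353}, these arise precisely in cases~(4) and~(5): case~(4) gives four quotients onto $\PSL{p}$ when $p\equiv\pm1\pmod{5}$, $p\equiv1,3,4,5,9\pmod{11}$ and $3\pm2\sqrt{5}$ are both squares in $\GF{p}$, while case~(5) gives two quotients when $p\equiv\pm1\pmod{5}$ and $p\equiv2,6,7,8,10\pmod{11}$. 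These are the candidates that must be shown to yield chiral polytopes, and their numbers already match the two cases of the statement.

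Next I would establish the intersection property. By Lemma~\ref{lem:inter353}, every quotient of $\Gamma^+$ isomorphic to $\PSL{q}$ with $q\neq9,11$ satisfies condition~\eqref{C3}. In cases~(4) and~(5) the group is $\PSL{p}$ with $p$ prime lying in a nonzero residue class modulo $11$, so $p\neq11$, and since $p$ is prime we also have $p\neq9$. Hence Lemma~\ref{lem:inter353} applies to each candidate quotient and \eqref{C3} holds throughout.

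It then remains to verify chirality, which I would carry out exactly as in Theorem~\ref{thm:534}. Suppose there were an involutory automorphism $\alpha$ of $\gen{\s{1},\s{2},\s{3}}$ with $\alpha(\s{1})=\s{1}^{-1}$, $\alpha(\s{2})=\s{1}^2\s{2}$ and $\alpha(\s{3})=\s{3}$, so that the quotient would be the rotation subgroup of a regular polytope. If $\alpha$ is outer, the full automorphism group is $\PGL{p}$ or $\Sigma(2,p)$; by Theorem~\ref{RegRank4} the former is a regular rank-four automorphism group only for $p=5$, which is excluded since $p\equiv\pm1\pmod{5}$, and the latter is not of the admissible form $\Sigma(2,p^{2d})$. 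If $\alpha$ is inner, the full group is $\PSL{p}\times C_2$ and $\PSL{p}$ would be the automorphism group of a (non directly) regular $4$-polytope; by~\cite{LS2005a} this forces $p\in\set{11,19}$, and of type $[3,5,3]$ only $p=11$, which is again excluded. Thus no such $\alpha$ exists, every candidate quotient is chiral, and the counts in cases~(1) and~(2) follow.

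The step I expect to be the main obstacle is the chirality argument, and in particular cleanly separating the regular $[3,5,3]$ polytope of $\PSL{11}$ from the chiral family. This separation works because $p=11\equiv0\pmod{11}$ lies outside both residue lists of cases~(4) and~(5) of Theorem~\ref{thm:Jones353}, so the regular example never enters the enumeration. Since the heavy computation identifying the quotients is already done in Theorem~\ref{thm:Jones353}, the remaining work is conceptual: combining the intersection result of Lemma~\ref{lem:inter353} with the regularity obstructions supplied by Theorem~\ref{RegRank4} and~\cite{LS2005a} to rule out both the inner and outer forms of the hypothetical reflecting automorphism.
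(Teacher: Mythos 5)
Your proposal is correct and follows essentially the same route as the paper: the candidates come from Theorem~\ref{thm:Jones353}, the intersection property from Lemma~\ref{lem:inter353}, and chirality from Theorem~\ref{RegRank4} together with the classification of regular rank-four polytopes for $\PSL{q}$ in~\cite{LS2005a}, exactly as in the proof of Theorem~\ref{thm:534} which the paper's own (terser) proof invokes by analogy. The only cosmetic difference is that you dispose of the exceptional $p=11$ regular $[3,5,3]$ example via the residue conditions modulo $11$, whereas the paper appeals to the computational data in Table~\ref{table:PSL}.
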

  
  \begin{proof}
  One can check Table \ref{table:PSL} to verify that $\PSL{11}$ has no chiral polytopes of rank $4$.
  Hence the results is a consequence of Theorem \ref{RegRank4} and Lemma \ref{lem:inter353}.
  \end{proof}
  
  This latter theorem gives points \eqref{1119mod20_1} and \eqref{1119mod20_2} of Theorem \ref{maintheo} and hence finishes the proof of Theorem~\ref{maintheo}.
  
  \begin{table}[h]
{\renewcommand{\arraystretch}{1.0}%
\centering
  $\begin{array}{|c|c|c|c|c|}
  \hline
  q&\#&q(4)&p(20)&\text{case(s)}\\
  
  \hline
  4&0&&&\\
  5&0&1&5&\\
  7&0&3&7&\\
  8&2&&&\eqref{main:PGL}\\
  9&0&1&9&\\
  11&0&3&11&\\
  13&6&1&13&\eqref{PSL_1mod4}\\
  16&2&&&\eqref{main:PGL}\\
  17&10&1&17&\eqref{PSL_1mod4}\\
  19&4&3&19&\eqref{1119mod20_2}\\
  23&0&3&3&\\
  25&2&1&5&\eqref{PSL_1mod4}\\
  27&0&3&7&\\
  29&10&1&9&\eqref{PSL_1mod4}\\
  31&6&3&11&\eqref{1119mod20_4}\eqref{3139mod40}\\
  32&6&&&\eqref{main:PGL}\\
  37&12&1&17&\eqref{PSL_1mod4}\\
  41&38&1&1&\eqref{PSL_1mod4}\\
  43&0&3&3&\\
  47&0&3&7&\\
  49&16&1&9&\eqref{PSL_1mod4}\\
  53&12&1&13&\eqref{PSL_1mod4}\\
  59&6&3&19&\eqref{1119mod20_1}\eqref{1119mod20_4}\\
  61&44&1&1&\eqref{PSL_1mod4}\\
  64&12&&&\eqref{main:PGL}\\
  67&0&3&7&\\
  71&10&3&11&\eqref{1119mod20_1}\eqref{1119mod20_4}\eqref{3139mod40}\\
  73&38&1&13&\eqref{PSL_1mod4}\\
  79&8&3&19&\eqref{1119mod20_2}\eqref{1119mod20_4}\eqref{3139mod40}\\
  81&6&1&1&\eqref{PSL_1mod4}\\
  83&0&3&3&\\
  \hline

  \end{array}$
  }
  {\renewcommand{\arraystretch}{1.0}%
  \centering
  $\begin{array}{|c|c|c|c|c|}
  \hline
  q&\#&q(4)&p(20)&\text{case(s)}\\
  \hline
    89&46&1&9&\eqref{PSL_1mod4}\\
  97&56&1&17&\eqref{PSL_1mod4}\\
  101&42&1&1&\eqref{PSL_1mod4}\\
  109&42&1&9&\eqref{PSL_1mod4}\\
  113&52&1&13&\eqref{PSL_1mod4}\\
  121&16&1&1&\eqref{PSL_1mod4}\\
  125&10&1&5&\eqref{PSL_1mod4}\\
  128&18&&&\eqref{main:PGL}\\
  131&6&3&11&\eqref{1119mod20_3}\\
  137&54&1&17&\eqref{PSL_1mod4}\\
  139&2&3&19&\eqref{1119mod20_2}\\
  149&38&1&9&\eqref{PSL_1mod4}\\
  151&8&3&11&\eqref{1119mod20_2}\eqref{1119mod20_4}\eqref{3139mod40}\\
  157&42&1&17&\eqref{PSL_1mod4}\\
  169&44&1&9&\eqref{PSL_1mod4}\\
  173&42&1&13&\eqref{PSL_1mod4}\\
  179&2&3&19&\eqref{1119mod20_4}\\
  181&?&1&1&\eqref{PSL_1mod4}\\
  191&?&3&11&\eqref{1119mod20_3}\eqref{3139mod40}\\
  193&?&1&13&\eqref{PSL_1mod4}\\
  197&?&1&17&\eqref{PSL_1mod4}\\
  199&?&3&19&\eqref{1119mod20_1}\eqref{1119mod20_3}\eqref{3139mod40}\\
  211&?&3&11&\eqref{1119mod20_2}\eqref{1119mod20_4}\\
  223&?&3&3&\\
  227&?&3&7&\\
  229&?&1&9&\eqref{PSL_1mod4}\\
  233&?&1&13&\eqref{PSL_1mod4}\\
  239&?&3&19&\eqref{1119mod20_2}\eqref{1119mod20_3}\eqref{3139mod40}\\
  241&?&1&1&\eqref{PSL_1mod4}\\
  243&?&3&3&\\
  251&?&3&11&\eqref{1119mod20_1}\eqref{1119mod20_3}\\
  \hline
  \end{array}$
  }
  \caption{Number of chiral $4$-polytopes with $\PSL{q}$ groups.  When the type is one of $[5,3,5]$, $[3,5,3]$, $[5,3,4]$ or $[4,3,5]$, the last column indicates all the cases to refer to (see Theorem \ref{maintheo})}
  \label{table:PSL}
  \end{table}
  
\section{Conclusion and open problems}\label{sec:conj}
  
The problem of determining the existence of a chiral polytope for the projective linear groups is solved for ``most" of the groups.
Indeed, the smallest groups for which we do not have an answer are $\PSL{3^{15}}$, $\PSL{3^{21}}$, $\PSL{7^{15}}$, $\PSL{11^{15}}$,..., that is, $\PSL{q}$ where $q=p^d\equiv3\pmod{4}$ with $1<d\equiv1\pmod{2}$ not a prime power.
In fact, we believe that the following conjecture provides generators for the automorphism group of a chiral $4$-polytope.
Thanks to {\sc Magma} computations, we have been able to show its validity for many values, up to $q=11^{21}$.


  \begin{conj}\label{conj:1}
Let $p$ be an odd prime and $e_1>1$, $e_2>1$ two odd integers.
There exist two primitive elements $j_1\in\GF{p^{e_1}}$ and $j_2\in\GF{p^{e_2}}$ such that $\Omega:=\omega_1^2\omega_2^2-4(\omega_1^2+\omega_2^2)$ is a square in $\GF{p^{\Lcm{e_1,e_2}}}$, where $\omega_i:=j_i+j_i^{-1}$
\end{conj}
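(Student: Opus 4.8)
The plan is to prove the conjecture by a character-sum count showing that the number of admissible pairs of primitive elements is positive once $p^{e_2}$ is large enough, so that only finitely many triples $(p,e_1,e_2)$ remain --- exactly those already settled by the \textsc{Magma} computations up to $q=11^{21}$. The first step is a purely algebraic simplification. Writing $\omega_i^2-4=(j_i+j_i^{-1})^2-4=(j_i-j_i^{-1})^2$, one checks that
\[
\Omega=\omega_1^2\omega_2^2-4(\omega_1^2+\omega_2^2)=(\omega_1^2-4)(\omega_2^2-4)-16=(j_1-j_1^{-1})^2(j_2-j_2^{-1})^2-16 .
\]
Hence, putting $a:=j_1-j_1^{-1}\in\GF{p^{e_1}}$ and $b:=j_2-j_2^{-1}\in\GF{p^{e_2}}$, the task is to find primitive $j_1,j_2$ for which $a^2b^2-16=(ab-4)(ab+4)$ is a square in $\GF{p^{L}}$, where $L:=\Lcm{e_1,e_2}$. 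Let $\eta$ be the quadratic character of $\GF{p^{L}}$, extended by $\eta(0)=0$, so that $\tfrac12\bigl(1+\eta(\Omega)\bigr)$ is the indicator that $\Omega$ is a nonzero square. Expanding primitivity through the standard formula $\mathbf{1}[j_i\text{ primitive}]=\frac{\eul{p^{e_i}-1}}{p^{e_i}-1}\sum_{d\mid p^{e_i}-1}\frac{\mu(d)}{\eul{d}}\sum_{\mathrm{ord}\,\chi=d}\chi(j_i)$, the count of admissible pairs becomes $\tfrac12\,\eul{p^{e_1}-1}\,\eul{p^{e_2}-1}$ (an exact main term coming from the trivial characters) plus a combination of the weighted character sums $S(\chi_1,\chi_2):=\sum_{j_1,j_2}\chi_1(j_1)\chi_2(j_2)\,\eta(\Omega)$, the $j_i$ running over $\GF{p^{e_i}}^\times$.

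To estimate $S(\chi_1,\chi_2)$ I would use the standard identity that the quadratic character of the large field factors through the norm, $\eta=\eta_2\circ N_{\GF{p^{L}}/\GF{p^{e_2}}}$, where $\eta_2$ is the quadratic character of $\GF{p^{e_2}}$ (indeed $\eta_2\circ N$ is the unique character of order two on $\GF{p^{L}}^\times$); the oddness of $e_1,e_2$ guarantees that $L/e_2=e_1/\Gcd{e_1,e_2}$ is odd, which keeps the relevant degrees and the compatibility of the $\chi_i$ under control. Freezing $j_1$, hence $a^2\in\GF{p^{e_1}}$, and writing $\sigma\colon x\mapsto x^{p^{e_2}}$ for the Frobenius of $\GF{p^{L}}/\GF{p^{e_2}}$, this gives $\eta(\Omega)=\eta_2\!\left(\prod_{k=0}^{L/e_2-1}\bigl(\sigma^{k}(a^2)\,b^2-16\bigr)\right)=\eta_2\bigl(P_a(b^2)\bigr)$ with $P_a\in\GF{p^{e_2}}[X]$ of degree $L/e_2$. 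Since $b^2=(j_2-j_2^{-1})^2$ is a rational function of $j_2$ of bounded degree, the inner sum $\sum_{j_2\in\GF{p^{e_2}}^\times}\chi_2(j_2)\,\eta_2\bigl(P_a(b^2)\bigr)$ is a one-variable multiplicative character sum over $\GF{p^{e_2}}$, to which Weil's bound applies and yields $O\bigl((L/e_2)\sqrt{p^{e_2}}\bigr)$. Summing trivially over the $p^{e_1}$ values of $j_1$ and over the $p^{o(1)}$ characters surviving the M\"obius expansion, the whole error is $O\bigl(p^{\,e_1+e_2/2+o(1)}\bigr)$, negligible against the main term $\asymp p^{\,e_1+e_2-o(1)}$; so the count is positive for all sufficiently large fields.

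The hard part is the non-degeneracy hypothesis underlying Weil's bound for the inner sum: one must show that, for all but $O(p^{e_1/2})$ choices of $j_1$, the rational function $j_2\mapsto j_2^{m}\,P_a\bigl((j_2-j_2^{-1})^2\bigr)$ (with $m$ fixed by $\mathrm{ord}\,\chi_2$) is not a perfect power in $\GF{p^{L}}(j_2)^\times$, so that genuine square-root cancellation occurs. The exceptional $j_1$ are those for which the roots of $P_a$ coincide or are interchanged by $j_2\mapsto j_2^{-1}$; for these one only has the trivial bound $p^{e_2}$, and one must check that their number is small enough to be absorbed. Carrying out this degeneracy analysis uniformly in $p,e_1,e_2$, and making the threshold on $p^{e_2}$ explicit enough to meet the range already covered computationally, is exactly where the difficulty lies, and is presumably why the statement is recorded as a conjecture rather than a theorem.
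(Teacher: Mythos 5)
First, be aware that the paper does not prove this statement at all: it is recorded as a conjecture, supported only by \textsc{Magma} verification for many values up to $q=11^{21}$ and by the empirical observation that roughly half of all pairs of primitive elements yield a square $\Omega$. So there is no proof in the paper to compare against, and your submission is a programme rather than a proof --- as you yourself acknowledge. The parts you do carry out are sound and genuinely useful: the identity $\Omega=(j_1-j_1^{-1})^2(j_2-j_2^{-1})^2-16$ is correct, the factorization of the quadratic character of $\GF{p^{\Lcm{e_1,e_2}}}$ through the norm down to $\GF{p^{e_2}}$ is valid (the norm is surjective, so $\eta_2\circ N$ is the unique order-two character), and the resulting reformulation as $\eta_2\bigl(P_a(b^2)\bigr)=1$ for a polynomial $P_a$ of degree $L/e_2$ over $\GF{p^{e_2}}$ is a sensible reduction whose heuristic main term $\tfrac12\,\eul{p^{e_1}-1}\,\eul{p^{e_2}-1}$ matches the paper's observed probability of about $1/2$.

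The genuine gaps are the ones you flag plus one you understate. (i) The non-degeneracy input to Weil's bound is not established: you must show that for all but $o(p^{e_1})$ values of $j_1$ the function $j_2^{m}\,P_a\bigl((j_2-j_2^{-1})^2\bigr)$ is not a perfect power of the relevant order in $\GFb{p}(j_2)^\times$, and your claimed $O(p^{e_1/2})$ count of exceptional $j_1$ is asserted, not proved; without it the error term is not controlled. (ii) Even granting square-root cancellation, the quantitative conclusion fails exactly where it is needed: the smallest open case is $q=3^{15}$ with $(e_1,e_2)=(3,5)$, where the main term is $\tfrac12\eul{26}\eul{242}=660$ while your Weil-type error, roughly $p^{e_1}\cdot 2^{\omega}\cdot(L/e_2)\sqrt{p^{e_2}}$, is already in the thousands. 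So the ``finitely many remaining triples'' are not automatically the ones already settled; one would need a fully explicit threshold and then a separate exhaustive verification over all pairs $(e_1,e_2)$ below it, which the paper's spot checks at particular $q$ do not provide. Until (i) and (ii) are carried out, the statement remains open.
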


It appears, from {\sc Magma} computations, that the probability that a couple of primitive elements $(j_1,j_2)$ are such that the corresponding $\Omega$ is a square is nearly $1/2$.
This gives a strong argument in favour of the truth of the conjecture.

We now show how we would construct a candidate for the automorphism group of a chiral polytope with $\PSL{q}$ as automorphism group, when $q=p^d$ and $1<d\equiv1\pmod{2}$ is not a prime power.

Let $d$ be an odd integer but not a prime power, and $q=p^d$ where $p$ is an odd prime.
Let $e_1$ and $e_2$ be such that $e_1e_2=d$ and $\Gcd{e_1,e_2}=1$.
Let $j_i$ and $w_i$ be as in Conjecture \ref{conj:1} and let
\begin{align*}
\s{1}:=\frac{1}{2}\left[
\begin{matrix}
\omega_1&\omega_1+2\\\omega_1-2&\omega_1
\end{matrix}
\right],&&
\s{2}:=\frac{1}{2}\left[
\begin{matrix}
2+\omega_1&\omega_1\\-\omega_1&2-\omega_1
\end{matrix}
\right]&&\text{and}&&
\s{3}=\left[\begin{matrix}
w&x\\y&z
\end{matrix}\right],
\end{align*}
where $\Det{\s{3}}=1$ and $w+z=\omega_2$.
First note that $\s{1}\s{2}$ is an involution.
Indeed,
\begin{align*}
\s{1}\s{2}=\left[
\begin{matrix}
0&1\\-1&0
\end{matrix}
\right].
\end{align*}
We are interested in finding solutions for $\s{3}$ such that $\s{1}\s{2}$ and $\s{1}\s{2}\s{3}$ are involutions.
If $\s{1}\s{2}\s{3}$ is an involution, then we must have $x=y$.
Moreover, if $\s{2}\s{3}$ is an involution, then
\begin{align*}
w(\omega_1+2)+z(2-\omega_1)=0.
\end{align*}
That is
\begin{align*}
w=\frac{\omega_2}{2\omega_1}(\omega_1-2)&&\text{and}&&z=\frac{\omega_2}{2\omega_1}(\omega_1+2).
\end{align*}
Since $wz-x^2=1$, we have
\begin{align*}
x^2\omega_1^2+\omega_1^2+\frac{1}{4}\omega_2^2(4-\omega_1^2)=0.
\end{align*}
The determinant of this quadratic extension is $D=\omega_1^2(\omega_1^2\omega_2^2-4(\omega_1^2+\omega_2^2))$.
If Conjecture $\ref{conj:1}$ is true, then $D$ is a square in $\GF{p^{\Lcm{e_1,e_2}}}$ so
\begin{align*}
x=y=\pm\frac{\sqrt{\omega_1^2\omega_2^2-4(\omega_1^2+\omega_2^2)}}{2\omega_1}.
\end{align*}
So one can write a representative solution for $\s{3}$ as
\begin{align*}
\s{3}=\left[\begin{matrix}
\omega_2(\omega_1-2)&\pm\sqrt{\omega_1^2\omega_2^2-4(\omega_1^2+\omega_2^2)}\\
\pm\sqrt{\omega_1^2\omega_2^2-4(\omega_1^2+\omega_2^2)}&\omega_2(\omega_1+2)
\end{matrix}\right].
\end{align*}
It is then easy to see that $\gen{\s{1},\s{2},\s{3}}\cong\PSL{q}$ and $\gen{\s{1}}\cap\gen{\s{2}}\cong\gen{\s{2}}\cap\gen{\s{3}}\cong\set{\epsilon}$.
Moreover, by Theorem \ref{RegRank4}, $\gen{\s{1},\s{2},\s{3}}$ is not the rotation subgroup of a directly regular polytope.
However, it remains to prove that $\gen{\s{1},\s{2}}\cap\gen{\s{2},\s{3}}=\gen{\s{2}}$.
If this intersection contains a dihedral subgroup, then it is a subfield subgroup isomorphic to $\PSL{p}$.
Hence the possibilities for this intersection are $\PSL{p}$, $C_p$ and metacyclic subgroups.
We finish this paper by stating the following less obvious conjecture, for which we have not been able to do any computing experiments.
\begin{conj}
In the previous construction, $\set{\s{1},\s{2},\s{3}}$ satisfies the intersection property, that is, $\gen{\s{1},\s{2}}\cap\gen{\s{2},\s{3}}=\gen{\s{2}}$.
\end{conj}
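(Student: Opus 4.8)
The plan is to work geometrically with the invariant sublines of the two parabolic subgroups. Write $H_1:=\gen{\s{1},\s{2}}$ and $H_2:=\gen{\s{2},\s{3}}$. Since $\s{1}\s{2}$ is the involution of trace $0$ and $\Tr{\s{1}}=\omega_1$, the trace field of the two-generator group $H_1$ is $\GF{p}(\omega_1)=\GF{p^{e_1}}$ (using that $e_1$ is odd, so $\omega_1=j_1+j_1^{-1}$ generates the whole field), whence $H_1\cong\PSL{p^{e_1}}$; likewise $\s{2}\s{3}$ is an involution and $\Tr{\s{3}}=\omega_2$, so the trace field of $H_2$ is $\GF{p}(\Tr{\s{2}},\Tr{\s{3}},\Tr{\s{2}\s{3}})=\GF{p}(\omega_2)=\GF{p^{e_2}}$ and $H_2\cong\PSL{p^{e_2}}$, the entries $\pm\sqrt{\Omega}$ appearing off-diagonally and cancelling in these traces. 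The element $\s{2}$ has trace $2$, hence is unipotent of order $p$ and fixes the unique point $P:=[1:-1]$ of $\PG{q}$, so $\gen{\s{2}}\cong\C{p}$ lies in $H_1\cap H_2$. Let $\ell_1\cong\PG{p^{e_1}}$ and $\ell_2\cong\PG{p^{e_2}}$ be the projective sublines preserved by $H_1$ and $H_2$ respectively; both contain $P$.

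The whole statement then follows from the single geometric claim that $\ell_1\cap\ell_2=\set{P}$. Granting it, every $h\in H_1\cap H_2$ preserves both $\ell_1$ (as $h\in H_1$) and $\ell_2$ (as $h\in H_2$), hence fixes the set $\ell_1\cap\ell_2=\set{P}$, so $H_1\cap H_2$ stabilises $P$. It then contains no nontrivial semisimple element: such an element fixes the rational point $P$, so it is split and its second fixed point $Q$ is rational too, whence $Q\in\ell_1\cap\ell_2=\set{P}$ forces $Q=P$, contradicting that a semisimple element has two distinct fixed points. Thus $H_1\cap H_2$ is unipotent, equal to $U_1\cap U_2$, where $U_i$ is the unipotent radical of the stabiliser of $P$ in $H_i$; in particular it is neither the subfield subgroup $\PSL{p}$ produced by Proposition \ref{prop:interSubf} nor any $A_4$, $S_4$ or $A_5$, all of which contain involutions (which are semisimple in odd characteristic). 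Finally, placing $P$ at $\infty$, the radical $U_1\cong\E{p^{e_1}}$ consists of the maps $z\mapsto z+t$ with $t\in\GF{p^{e_1}}$ and $U_2\cong\E{p^{e_2}}$ of those with $t$ in a one-dimensional $\GF{p^{e_2}}$-subspace $c\GF{p^{e_2}}$ of $\GF{q}$, while $\s{2}$ is translation by some $b_0\neq0$ common to both; for any common $t$ the ratio $t/b_0$ lies in $\GF{p^{e_1}}\cap\GF{p^{e_2}}=\GF{p}$ because $\Gcd{e_1,e_2}=1$, so $U_1\cap U_2=\gen{\s{2}}$ and hence $\gen{\s{1},\s{2}}\cap\gen{\s{2},\s{3}}=\gen{\s{2}}$, as required.

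The main obstacle is the transversality claim $\ell_1\cap\ell_2=\set{P}$ itself. Since $\GF{p}\subseteq\GF{p^{e_1}}$, the subline $\ell_1$ contains the entire rational subline $\PG{p}$, so the claim asserts in particular that $\ell_2$ meets $\PG{p}$ only at $P$; equivalently, $H_1$ and $H_2$ share no common subfield subgroup $\PSL{p}$ (which would preserve a full common subline $\PG{p}$ of $p+1\geq4$ points) and no second common fixed point. The subline $\ell_2$ is determined by the generator $\s{3}$, whose only non-rational entries are $\pm\sqrt{\Omega}$, so $\ell_1\cap\ell_2=\set{P}$ is an explicit but $\sqrt{\Omega}$-dependent condition over $\GF{p^d}$, with no apparent uniform congruence on $p$ guaranteeing it. A counting heuristic is encouraging: two sublines of sizes $p^{e_1}+1$ and $p^{e_2}+1$ in a line of $p^d+1$ points, with $e_1+e_2<e_1e_2=d$, are expected to share no point beyond the forced point $P$, which matches the experimentally observed probability near $1/2$ attached to Conjecture \ref{conj:1}. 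A rigorous argument, however, would have to pin down $\sqrt{\Omega}$ precisely, and since the smallest open instance is $q=3^{15}$ no computation is available; this is exactly why the statement remains conjectural. A realistic intermediate goal would be to establish $\ell_1\cap\ell_2=\set{P}$ under a genericity hypothesis on $\sqrt{\Omega}$ of the same flavour as Conjecture \ref{conj:1}.
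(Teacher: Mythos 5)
The statement you set out to prove is left as an open conjecture in the paper --- no proof is given there --- so the only question is whether your argument actually closes it, and it does not. Everything hinges on the transversality claim $\ell_1\cap\ell_2=\set{P}$, which you assume rather than establish, and which is in substance the conjecture itself. Indeed, since $\s{2}$ has order $p$ and permutes $\ell_1\cap\ell_2$ with $P$ as its only fixed point, $\car{\ell_1\cap\ell_2}\equiv1\pmod{p}$; so if the two sublines share any point beyond $P$, they share at least $p+1\geq3$ points, hence the unique $\GF{p}$-subline through three of them, and then $\gen{\s{1},\s{2}}\cap\gen{\s{2},\s{3}}$ contains the copy of $\PSL{p}$ acting naturally on that subline, so the conjecture fails. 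Your geometric claim is therefore equivalent to the conclusion, and nothing in the explicit form of $\s{3}$ --- in particular the entry $\sqrt{\Omega}$ supplied by Conjecture \ref{conj:1} --- has been used to exclude a common $\GF{p}$-subline. Your own closing paragraph concedes this, so what you have is a reformulation, not a proof.

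That said, the reduction is correct and genuinely sharper than the paper's remark that the intersection must be $\PSL{p}$, $C_p$ or metacyclic: your semisimple-element argument shows the metacyclic case cannot occur, i.e.\ the intersection is either exactly $\gen{\s{2}}$ or contains a full $\PSL{p}$. The identification of $\gen{\s{1},\s{2}}\cong\PSL{p^{e_1}}$ and $\gen{\s{2},\s{3}}\cong\PSL{p^{e_2}}$ via trace fields (using that $e_1,e_2$ are odd so $\omega_i$ generates $\GF{p^{e_i}}$), the observation that $\s{2}$ is unipotent with unique fixed point $P=[1:-1]$, and the final computation $U_1\cap U_2=\gen{\s{2}}$ from $\Gcd{e_1,e_2}=1$ are all sound. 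A defensible unconditional statement extractable from your work is the equivalence: the triple satisfies the intersection property if and only if $\ell_1$ and $\ell_2$ meet only at $P$, equivalently if and only if $\gen{\s{1},\s{2}}$ and $\gen{\s{2},\s{3}}$ share no common subfield subgroup isomorphic to $\PSL{p}$. Proving that this latter condition actually holds for the matrices of Section \ref{sec:conj} is the open content, and your proposal leaves it untouched.
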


\section{Acknowledgements}
This work has been financially supported by Marsden Grant UOA1218 from the Royal Society of New Zealand.
The second author would like to thank Marston Conder for his help with the proof of the intersection property in section \ref{sec:353}.

\bibliographystyle{plain}

\end{document}